\def\@settitle{%
  \vspace*{-20pt}
  \begin{flushleft}%
    \baselineskip14\p@\relax
    \normalfont\bfseries\LARGE
    \@title
  \end{flushleft}%
}
\def\@setauthors{%
  \begingroup
  \def\thanks{\protect\thanks@warning}%
  \trivlist
  \large \@topsep30\p@\relax
  \advance\@topsep by -\baselineskip
  \item\relax
  \author@andify\authors
  \def\\{\protect\linebreak}%
  \authors
  \ifx\@empty\contribs
  \else
    ,\penalty-3 \space \@setcontribs
    \@closetoccontribs
  \fi
  \normalfont
  \@setaddresses
  \endtrivlist
  \endgroup
}
\def\@setaddresses{\par
  \nobreak \begingroup\raggedright
  \small
  \def\author##1{\nobreak\addvspace\smallskipamount}%
  \def\\{\unskip, \ignorespaces}%
  \interlinepenalty\@M
  \def\address##1##2{\begingroup
    \par\addvspace\bigskipamount\noindent
    \@ifnotempty{##1}{(\ignorespaces##1\unskip) }%
    {\ignorespaces##2}\par\endgroup}%
  \def\curraddr##1##2{\begingroup
    \@ifnotempty{##2}{\nobreak\noindent\curraddrname
      \@ifnotempty{##1}{, \ignorespaces##1\unskip}\/:\space
      ##2\par}\endgroup}%
  \def\email##1##2{\begingroup
    \@ifnotempty{##2}{\smallskip\nobreak\noindent E-mail address%
      \@ifnotempty{##1}{, \ignorespaces##1\unskip}\/:\space
      \ttfamily##2\par}\endgroup}%
  \def\urladdr##1##2{\begingroup
    \def~{\char`\~}%
    \@ifnotempty{##2}{\nobreak\noindent\urladdrname
      \@ifnotempty{##1}{, \ignorespaces##1\unskip}\/:\space
      \ttfamily##2\par}\endgroup}%
  \addresses
  \endgroup
  \global\let\addresses=\@empty
}
\def\@setabstracta{%
    \ifvoid\abstractbox
  \else
    \skip@25\p@ \advance\skip@-\lastskip
    \advance\skip@-\baselineskip \vskip\skip@
    \box\abstractbox
    \prevdepth\z@ 
    \vskip-10pt
  \fi
}
\renewenvironment{abstract}{%
  \ifx\maketitle\relax
    \ClassWarning{\@classname}{Abstract should precede
      \protect\maketitle\space in AMS document classes; reported}%
  \fi
  \global\setbox\abstractbox=\vtop \bgroup
    \normalfont\small
    \list{}{\labelwidth\z@
      \leftmargin0pc \rightmargin\leftmargin
      \listparindent\normalparindent \itemindent\z@
      \parsep\z@ \@plus\p@
      
    }%
    \item[\hskip\labelsep\bfseries\abstractname.]%
}{%
  \endlist\egroup
  \ifx\@setabstract\relax \@setabstracta \fi
}
\def\section{\@startsection{section}{1}%
  \z@{-1.2\linespacing\@plus-.5\linespacing}{.8\linespacing}%
  {\normalfont\bfseries\large}}
\def\subsection{\@startsection{subsection}{2}%
  \z@{-.8\linespacing\@plus-.3\linespacing}{.3\linespacing\@plus.2\linespacing}%
  {\normalfont\bfseries}}
\def\subsubsection{\@startsection{subsubsection}{3}%
  \z@{.7\linespacing\@plus.1\linespacing}{-1.5ex}%
  {\normalfont\itshape}}
\def\@secnumfont{\bfseries}
\newcolumntype{2}{D{.}{}{2.0}}
\def\Z{\mathbb{Z}}
\def\Q{\mathbb{Q}}
\def\R{\mathbb{R}}
\def\SL{\operatorname{SL}(3;\Z)}
\def\+{\oplus}
\def\varepsilon{\epsilon}
\newcommand{\qtheta}[1]{\mathbb{Q}[\Theta_{#1}]}
\newcommand{\ztheta}[1]{\mathbb{Z}[\Theta_{#1}]}
\newcommand{\CS}{Cappell-Shaneson }
\theoremstyle{plain}
\newtheorem{theorem}{Theorem}[section]
\newtheorem{proposition}[theorem]{Proposition}
\newtheorem{corollary}[theorem]{Corollary}
\newtheorem{lemma}[theorem]{Lemma}
\newtheorem{theoremalpha}{Theorem}
\newtheorem{corollaryalpha}[theoremalpha]{Corollary} 
\newtheorem{conjecture}{Conjecture}
\newtheorem*{spc4}{The smooth 4-dimensional Poincar\'{e} conjecture}
\theoremstyle{definition}
\newtheorem*{organization}{Organization of the paper}
\newtheorem{definition}[theorem]{Definition}
\newtheorem{example}[theorem]{Example}
\newtheorem{remark}[theorem]{Remark}
\newtheorem*{remark0}{Remark}
\newtheorem*{acknowledgement}{Acknowledgement}
\newtheorem*{claim}{Claim}
\def\to{\mathchoice{\longrightarrow}{\rightarrow}{\rightarrow}{\rightarrow}}
\newcommand{\shortxra}[2][]{\ext@arrow 0359\rightarrowfill@{#1}{#2}}
\def\longrightarrowfill@{\arrowfill@\relbar\relbar\longrightarrow}
\newcommand{\longxra}[2][]{\ext@arrow 0359\longrightarrowfill@{#1}{#2}}
\newcommand*{\@old@slash}{}\let\@old@slash\slash
\def\slash{\relax\ifmmode\delimiter"502F30E\mathopen{}\else\@old@slash\fi}
\newcommand{\Mod}[1]{\ (\mathrm{mod}\ #1)}
\begin{document}

\title [Ideal classes and Cappell-Shaneson homotopy 4-spheres]
{Ideal classes and Cappell-Shaneson homotopy 4-spheres}

\author{Min Hoon Kim}
\address{
  School of Mathematics\\
  Korea Institute for Advanced Study \\
  Seoul 130--722\\
  Republic of Korea
}
\email{kminhoon@kias.re.kr}

\author{Shohei Yamada}
\address{}
\email{fujijyu{\_}alcyone@yahoo.co.jp}

\subjclass[2000]{57R60}


\maketitle
\begin{abstract}Gompf proposed a conjecture on Cappell-Shaneson matrices whose affirmative answer implies that all Cappell-Shaneson homotopy 4-spheres are diffeomorphic to the standard 4-sphere. 
We study Gompf conjecture on Cappell-Shaneson matrices using various algebraic number theoretic techniques. We find a hidden symmetry between trace $n$ Cappell-Shaneson matrices and trace $5-n$ Cappell-Shaneson matrices which was suggested by Gompf experimentally. Using this symmetry, we prove that Gompf conjecture for the trace $n$ case is equivalent to the trace $5-n$ case. We confirm Gompf conjecture for the special cases that $-64\leq \textrm{trace}\leq 69$ and corresponding Cappell-Shaneson homotopy 4-spheres are diffeomorphic to the standard 4-sphere. We also give a new infinite family of Cappell-Shaneson spheres which are diffeomorphic to the standard 4-sphere.
\end{abstract}
 
\section{Introduction}
The smooth 4-dimensional Poincar\'{e} conjecture is a central open problem in low-dimensional topology.
\begin{spc4} Every homotopy $4$-sphere is diffeomorphic to~$S^4$.
\end{spc4}
Cappell and Shaneson \cite{Cappell-Shaneson:1976-1} constructed homotopy 4-spheres, called \emph{Cappell-Shaneson homotopy $4$-spheres}. These homotopy 4-spheres are the most notable, potential counterexamples of the smooth 4-dimensional Poincar\'{e} conjecture. The following folklore conjecture is a special case of the smooth 4-dimensional Poincar\'{e} conjecture and has remained open for 40 years.
\begin{conjecture}\label{conjecture:CS}Every \CS homotopy $4$-sphere is diffeomorphic to $S^4$.
\end{conjecture}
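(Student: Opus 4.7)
The plan is to reduce Conjecture~\ref{conjecture:CS} to Gompf's purely algebraic conjecture on \CS matrices and then to attack that conjecture via algebraic number theory of orders in cubic fields.

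First I would recall the setup. A \CS homotopy 4-sphere $\Sigma_A$ is built from a matrix $A \in \SL$ satisfying $\det(A - I) = \pm 1$, and such an $A$ has characteristic polynomial of the form $p_n(t) = t^3 - n t^2 + (n \pm 1)t - 1$ where $n = \trace{A}$. Gompf's conjecture, as signalled in the abstract, asserts that every trace $n$ \CS matrix is related to a distinguished standard one by an explicit list of elementary moves (stabilizations and handle slides), and this implies $\Sigma_A \cong S^4$. So the first task is to switch from topology to Gompf's combinatorial conjecture on matrices.

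Second, I would use the Latimer--MacDuffee correspondence: when $p_n$ is irreducible, $\GL$-conjugacy classes of integer matrices with characteristic polynomial $p_n$ biject with ideal classes of the order $\O_n = \Z[t]/(p_n(t))$ inside the cubic field $K_n = \Q[t]/(p_n(t))$. Gompf's moves then translate to explicit operations on ideal classes together with an auxiliary unit of $\O_n$. Third, I would search for a ring isomorphism $\O_n \cong \O_{5-n}$ induced by a M\"obius substitution $t \mapsto (at+b)/(ct+d)$ that sends $p_n$ to a unit multiple of $p_{5-n}$; verifying that this isomorphism intertwines Gompf's moves on the two sides is the content of the promised trace $n$ versus trace $5-n$ symmetry. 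Finally, for any bounded range of $n$ one may finish by computer: enumerate ideal classes and fundamental units of $\O_n$ with standard number-theory software, and for each class search directly for a sequence of Gompf moves to the standard matrix; the symmetry of step three halves this workload and produces a range like $-64 \leq n \leq 69$.

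The hardest step is the third. The nonlinear arithmetic $n + (5-n) = 5$ (as opposed to an obvious $n \leftrightarrow -n$ duality) strongly suggests that the symmetry only becomes visible after passing through the ring-theoretic language of $\O_n$ and is invisible at the matrix level; finding the correct M\"obius substitution and then checking that each elementary Gompf move pulls back to another Gompf move will require careful bookkeeping of bases, orientations, and the auxiliary unit data. I do not expect this plan to settle Conjecture~\ref{conjecture:CS} in full: as $|n| \to \infty$ both the class number and the regulator of $\O_n$ grow without effective control, so the finite search of step four cannot be pushed through, and a complete proof would require replacing that step with a structural theorem describing how Gompf moves act on the class group of $\O_n$.
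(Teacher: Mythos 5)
The first thing to say is that Conjecture~\ref{conjecture:CS} is not a theorem of this paper: it is stated as an open conjecture (``has remained open for 40 years''), and the paper proves only partial results toward it, namely Theorem~\ref{theorem:A} (the $n\leftrightarrow 5-n$ symmetry), Theorem~\ref{theorem:B} (Gompf's conjecture for $-64\leq n\leq 69$), and Corollaries~\ref{corollary:C} and~\ref{corollary:D}. There is therefore no proof in the paper for your proposal to match, and your proposal --- as you candidly admit in its final paragraph --- does not prove the statement either. That is the essential gap, and it is not one you could have been expected to close. What your outline actually describes is, in broad strokes, the paper's strategy for its partial results: reduce to Gompf's algebraic conjecture, apply the Latimer--MacDuffee--Taussky correspondence, exploit a ring isomorphism $\Z[\Theta_n]\cong\Z[\Theta_{5-n}]$, and finish bounded trace ranges by computer search.

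On the details, a few corrections are worth recording. The definition requires $\det(A-I)=1$ exactly, and the characteristic polynomial is exactly $f_n(x)=x^3-nx^2+(n-1)x-1$; there is no $\pm$ ambiguity. Gompf's moves are not stabilizations or handle slides performed on matrices: the generating relations are similarity together with $X_{c,d,n}\sim_G X_{c,d,n+kd}$, and the topological input (that Gompf equivalence preserves the diffeomorphism type of $\Sigma^\epsilon_A$) is imported wholesale from Gompf's theorem on multiplication by $\Delta^k$. The correspondence is with the ideal class \emph{monoid} $C(\Z[\Theta_n])$ of the order $\Z[\Theta_n]$, which need not be a group --- the paper shows $\Z[\Theta_{49k+27}]$ is never a Dedekind domain --- so your plan to ``enumerate ideal classes with standard number-theory software'' breaks down precisely there; the paper must analyze the non-invertible classes by hand via Dedekind--Kummer (the $n=27$ case is the hardest point of Theorem~\ref{theorem:B}). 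The isomorphism $\Z[\Theta_n]\to\Z[\Theta_{5-n}]$ is given by the polynomial substitution $\Theta_n\mapsto\Theta_{5-n}^2+(n-4)\Theta_{5-n}+1$ rather than a M\"obius substitution, and no auxiliary unit data is needed: compatibility with $\sim_G$ is immediate because the parameter $d$ is preserved ($d^*=d$). Finally, your closing assessment is exactly right and agrees with the paper's own: without a structural theorem controlling $C(\Z[\Theta_n])$ for all $n$, this method yields only finite trace ranges, and Conjecture~\ref{conjecture:CS} remains open.
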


One of our main results, Corollary~\ref{corollary:C}, will give the largest known family of \CS spheres that are diffeomorphic to $S^4$, supporting Conjecture~\ref{conjecture:CS}. To motivate our results, we recall several earlier results on Cappell-Shaneson spheres.
\subsection{Historical background}
Cappell-Shaneson spheres $\Sigma_A^\varepsilon$ are parametrized by  a matrix $A\in \SL$ with $\det(A-I)=1$ and a choice of framing $\varepsilon\in \Z_2$. We say a matrix $A\in \SL$ is a \emph{Cappell-Shaneson matrix} if $\det (A-I)=1$. For example, for any $n\in \Z$, the following matrix $A_n$ is a \CS matrix
\[ A_n=\begin{bmatrix}0&1&0\\0&1&1\\1&0&n+1 \end{bmatrix}.\]

We first recall history on \CS spheres $\Sigma_{A_n}^\epsilon$ corresponding to the family $A_n$ which have been studied thoroughly. For more details, we refer the reader to \cite[Section~14.2]{Akbulut:2016-1} where a nice discussion on $\Sigma_{A_n}^\epsilon$ is given with many handlebody diagrams. Akbulut and Kirby \cite{Akbulut-Kirby:1979-1} proved that $\Sigma_{A_0}^0$ is diffeomorphic to $S^4$ by drawing its handlebody diagram and simplifying the diagram. They claimed that $\Sigma_{A_0}^0$ is the double cover of the \CS fake $\mathbb{R}\mathbb{P}^4$, denoted by $Q$, corresponding to the matrix 
\[\begin{bmatrix}0&1&0\\0&0&1\\-1&1&0\end{bmatrix}\]
which was constructed in  \cite{Cappell-Shaneson:1976-2}. Aitchison and Rubinstein \cite{Aitchison-Rubinstein:1984-1} pointed out that $\Sigma_{A_0}^1$ is indeed the double cover of $Q$. We remark that $Q$ is used by Akbulut to construct several interesting fake non-orientable 4-manifolds in \cite{Akbulut:1982-1,Akbulut:1985-1}, and to show that a Gluck twist can change the diffeomorphism type for a \emph{non-orientable} 4-manifold in  \cite{Akbulut:1988-1}. (It is unknown whether a Gluck twist can change the diffeomorphism type of an \emph{orientable} 4-manifold.)
 In the same paper \cite{Aitchison-Rubinstein:1984-1}, Aitchison and Rubinstein proved that $\Sigma_{A_n}^0$ is diffeomorphic to $S^4$ for all $n$.

For the non-trivial framing case, Akbulut and Kirby \cite{Akbulut-Kirby:1985-1} drew a handlebody diagram of $\Sigma_{A_0}^1$ without 3-handles. They first introduced canceling pairs of 2- and 3-handles to remove 1-handles, and turned the resulting diagram upside-down to obtain the diagram without 3-handles. We remark that similar techniques are used in \cite{Akbulut:1999-1,Akbulut:2002-1,Akbulut:2012-1}. They showed that the punctured $\Sigma_{A_0}^1$ can be embedded in $S^4$. In particular, by topological Sch\"{o}nflies theorem, $\Sigma_{A_0}^1$ is homeomorphic to~$S^4$. (Of course, this fact also can be checked by using Freedman's theorem.) They also observed that $\Sigma_{A_0}^1$ is diffeomorphic to $S^4$ if a balanced presentation of the trivial group 
\[\langle x, y \mid xyx=yxy, x^5=y^4\rangle\]
 is Andrews-Curtis trivial. (This balanced presentation is unlikely Andrew-Curtis trivial.) 

Consequently, if $\Sigma_{A_0}^1$ were not diffeomorphic to $S^4$, then all of the smooth 4-dimensional Poincar\'{e} conjecture, the smooth Sch\"{o}nflies conjecture and the Andrews-Curtis conjecture would be false simultaneously. Gompf \cite{Gompf:1991-1} excluded this possibility by proving that $\Sigma_{A_0}^1$ is actually diffeomorphic to $S^4$ by adding a canceling pair of 2- and 3-handles. After a lengthy handlebody calculus, Gompf \cite{Gompf:1991-2} gave a handlebody diagram of $\Sigma_{A_n}^1$ without 3-handles for each $n$. 

Around three decades later, Freedman, Gompf, Morrison and Walker \cite{Freedman-Gompf-Morrison-Walker:2010-1} tried to disprove the smooth Poincar\'{e} conjecture via the following strategy. They considered knots obtained by adding a band to the two attaching circles of 2-handles in the handlebody diagrams of $\Sigma_{A_n}^1$ given in~\cite{Gompf:1991-2}. A simple, but interesting observation is that such knots are slice in a homotopy 4-ball obtained from $\Sigma_{A_n}^1$ by removing a small open ball. Hence if there is a non-slice knot obtained in this way, then  the smooth 4-dimensional Poincar\'{e} conjecture would be false. By choosing specific bands, they obtained explicit diagrams of such knots, and computed Rasmussen $s$-invariants of them to disprove the smooth 4-dimensional Poincar\'{e} conjecture, but the $s$-invariants of their examples are trivial. 

It turns out that there are underlying reasons that their attempts cannot be successful. Kronheimer and Mrowka \cite{Kronheimer-Mrowka:2013-1} proved that the $s$-invariant vanishes for knots which are slice in a homotopy 4-ball by giving a gauge theoretic interpretation of the $s$-invariant. Akbulut \cite{Akbulut:2010-1} added a marvellous canceling pair of 2- and 3-handles to the handlebody diagram of $\Sigma_{A_n}^1$ given in~\cite{Gompf:1991-2}, and proved that $\Sigma_{A_n}^1$ is diffeomorphic to $S^4$ for any integer $n$.

Now we recall what was known about general \CS spheres. From the construction of \CS spheres, it can be easily seen that two similar \CS matrices give diffeomorphic \CS spheres. More precisely, if $A$ and $B$ are similar \CS matrices, then \CS spheres $\Sigma_{A}^\epsilon$ and $\Sigma_{B}^\epsilon$ are diffeomorphic for any $\epsilon\in \Z_2$. Therefore it is natural to think of the set of the similarity classes of \CS matrices. 

Our starting point is a result of Aitchison and Rubinstein \cite{Aitchison-Rubinstein:1984-1} which states that every \CS matrix is similar to a \emph{standard} \CS matrix
\[X_{c,d,n}=\begin{bmatrix}0&a&b\\0&c&d\\1&0&n-c\end{bmatrix}\]
for some integers $c$, $d$ and $n$ such that $f_n(c)\equiv 0\Mod{d}$ where $f_n(x)=x^3-nx^2+(n-1)x-1$. Note that $f_n(x)$ is the minimal polynomial of $X_{c,d,n}$ and the entries $a$ and $b$ are determined as $b=(c-1)(n-c-1)$ and $ad=f_n(c)$ from the equalities $\det(X_{c,d,n})=\det(X_{c,d,n}-I)=1$.

Moreover, using a classical result of Latimer-MacDuffee and Taussky \cite{Latimer-MacDuffee:1933-1,Taussky:1949-1}, Aitchison and Rubinstein \cite{Aitchison-Rubinstein:1984-1} observed that for any integer $n$, there are only \emph{finitely} many similarity classes of trace $n$ \CS matrices. In fact, there is a bijection between the set of similarity classes of trace $n$ \CS matrices and the ideal class monoid $C(\Z[\Theta_n])$ where $\Theta_n$ is a root of $f_n(x)$. Via the bijection, the similarity class of $X_{c,d,n}$ corresponds to the ideal class $[\langle \Theta_n-c,d\rangle]$. (In particular, $A_n=X_{1,1,n+2}$ corresponds to the identity element in $C(\Z[\Theta_{n+2}])$, see Remark~\ref{remark:identity}.) 

In short, to confirm Conjecture~\ref{conjecture:CS}, it suffices to show that $\Sigma_{X_{c,d,n}}^\epsilon$ is diffeomorphic to $S^4$ for finitely many pairs of such integers $c$, $d$ for each integer $n$. (It is sufficient to check this for each representative of $C(\Z[\Theta_n])$.) However, this has been untouched mainly because finding and simplifying their handlebody diagrams of $\Sigma_{X_{c,d,n}}^\epsilon$ seem to be an onerous task.

 In \cite{Gompf:2010-1}, Gompf proved that \CS spheres $\Sigma^\epsilon_{X_{c,d,n}}$ and $\Sigma^{\epsilon}_{X_{c,d,n+kd}}$ are diffeomorphic for any $\epsilon\in \Z_2$ and any integer $k$. It is remarkable that Gompf's proof does not involve any handlebody diagram.  Nonetheless, this method is strong enough to give an alternative proof of the aforementioned result of Akbulut that $\Sigma_{A_n}^1$ is actually diffeomorphic to $S^4$ for any integer $n$. (To see this, note that $A_n=X_{1,1,n+2}$, and hence $\Sigma_{A_n}^\epsilon$ is diffeomorphic to  $\Sigma_{A_0}^\epsilon$ which is diffeomorphic to $S^4$ by \cite{Akbulut-Kirby:1979-1,Gompf:1991-1}.) Using a computation of $C(\Z[\Theta_{-5}])$ by Aitchison and Rubinstein, Gompf showed that more \CS spheres are standard. Indeed, two \CS spheres $\Sigma_{X_{2,3,-5}}^0$ and $\Sigma_{X_{2,3,-5}}^1$ correspond to the \CS matrix
\[X_{2,3,-5}=\begin{bmatrix}0&-5&-8\\0&2&3\\1&0&-7\end{bmatrix}\]
are diffeomorphic to $S^4$. This result does not follow from the result of Akbulut since $X_{2,3,-5}$ is not similar to $A_{-5}$.  
 
In this context, Gompf considered an equivalence relation on the set of standard \CS matrices generated by similarity and $X_{c,d,n}\sim_G X_{c,d,n+kd}$ for $k\in \Z$. (We will call the equivalence relation by \emph{Gompf equivalence}.) By the aforementioned result of Gompf, if two \CS matrices are Gompf equivalent, then they give diffeomorphic \CS spheres. Gompf conjectured the following  whose affirmative answer implies Conjecture~\ref{conjecture:CS}. 
\begin{conjecture}[{\cite[Conjecture 3.6]{Gompf:2010-1}}] \label{conjecture:Gompf}Every \CS matrix is Gompf equivalent to~$A_0$. 
\end{conjecture}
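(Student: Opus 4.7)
The plan is to reduce Gompf's conjecture to the single claim that every standard \CS matrix is Gompf equivalent to one with $d=1$. Given such a reduction, the ideal $\langle \Theta_n - c,1\rangle$ is the whole ring $\Z[\Theta_n]$ and therefore corresponds to the identity element of $C(\Z[\Theta_n])$. By the Latimer-MacDuffee-Taussky bijection, every $X_{c,1,n}$ is similar to $X_{1,1,n}$, and since $A_0 = X_{1,1,2}$, the Gompf move applied with $d=1$ (which shifts the trace by an arbitrary integer) carries $X_{1,1,n}$ to $A_0$. The entire conjecture thereby reduces to a ``denominator reduction'' problem inside the equivalence relation generated by similarity, the Gompf move $X_{c,d,n} \sim_G X_{c,d,n+kd}$, and (by the symmetry result of this paper) the trace involution $n \leftrightarrow 5-n$.

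The key structural observation I would exploit is that the Gompf move preserves the mod-$d$ reduction of the ideal data. Because $f_{n+kd}(x) - f_n(x) = kd\, x(x-1)$ is divisible by $d$, there is a canonical ring isomorphism $\Z[\Theta_n]/(d) \cong \Z[\Theta_{n+kd}]/(d)$ sending $\Theta_n \mapsto \Theta_{n+kd}$, under which $\langle \Theta_n - c,d\rangle$ and $\langle \Theta_{n+kd}-c,d\rangle$ have the same image. Thus the Gompf move, far from being a mysterious change of ring, simply lifts one fixed skeleton $(n \bmod d,\, c \bmod d,\, d)$ to different traces along an arithmetic progression. I would then reformulate Gompf's conjecture as follows: for every such skeleton, at least one of the lifts $\langle \Theta_{n'}-c,d\rangle \subset \Z[\Theta_{n'}]$, with $n'$ varying in the progression or replaced by $5-n'$ via the trace involution, is a principal ideal. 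If such a principal lift exists, a Gompf move to that trace, followed by a similarity to $X_{1,1,n'}$ and a final Gompf move back, carries the original matrix to $A_0$.

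The hardest step, and the primary obstacle to the whole program, is producing these principal lifts. The ideal class monoids $C(\Z[\Theta_n])$ can have arbitrarily large class numbers that fluctuate erratically with $n$, and there is no a~priori reason that an arithmetic progression of traces should contain a principal representative of a prescribed mod-$d$ skeleton. The natural analytic input would be a Chebotarev- or genus-theoretic density statement, asserting that among the cubic orders $\Z[\Theta_n]$ with $n$ in a fixed residue class modulo $d$, those in which $\langle \Theta_n - c, d\rangle$ becomes principal form a positive-density set; the trace involution of this paper doubles the available residue classes and should help close gaps left by density alone. A parallel fallback is a finite descent that repeatedly factors $d$ into smaller denominators via ideal factorizations of composite norm, combined with Gauss-style reduction on the quadratic form attached to $I_{c,d,n}$ by its norm form. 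In either route, proving that the Gompf equivalence is transitive enough on the infinite union $\bigsqcup_n C(\Z[\Theta_n])$ is where genuinely new arithmetic input is needed, and this is precisely why the conjecture has resisted proof since \cite{Gompf:2010-1}.
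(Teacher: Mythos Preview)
The statement you are attempting to prove is a \emph{conjecture}, not a theorem: the paper does not prove it, and indeed explicitly treats it as open. What the paper does establish is the partial result (Theorem~B) that Conjecture~\ref{conjecture:Gompf} holds for trace $n$ with $-64 \le n \le 69$, via explicit computation of $C(\Z[\Theta_n])$ and ad hoc chains of Gompf equivalences found with MAGMA. There is therefore no ``paper's own proof'' of the full conjecture to compare against.

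Your proposal is not a proof either, and to your credit you say so: you reduce the conjecture to the claim that every mod-$d$ skeleton admits a principal lift along the arithmetic progression $n' \equiv n \pmod{d}$ (or its image under $n \leftrightarrow 5-n$), and then correctly identify this as the step where ``genuinely new arithmetic input is needed.'' That reduction is sound and is essentially the viewpoint underlying the paper's reformulation (Conjecture~\ref{conjecture:reformulation}) and Lemma~\ref{lemma:induction}. But the density heuristics you sketch---Chebotarev-type statements guaranteeing a principal representative in a progression of cubic orders---are not known, and the paper gives concrete evidence that the problem is delicate: for infinitely many $n$ (e.g.\ $n = 49k+27$, Proposition~\ref{proposition:49k+27}) the ring $\Z[\Theta_n]$ is not even Dedekind, so $C(\Z[\Theta_n])$ is a genuine monoid with non-invertible classes, and your ``denominator reduction via ideal factorization'' would have to navigate these singular orders. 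In short, your outline is a reasonable reformulation of an open problem, not a proof; the gap you name is the whole conjecture.
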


 \subsection{Main results}

In this paper, using various techniques in algebraic number theory, we study Conjecture~\ref{conjecture:Gompf} in a systematic way. For brevity of our discussion, we say \emph{Conjecture~\ref{conjecture:CS} is true for a \CS matrix A} if $\Sigma_{A}^\epsilon$ is diffeomorphic to $S^4$ for every $\epsilon\in \Z_2$. Similarly, we say  \emph{Conjecture~\ref{conjecture:Gompf} is true for trace $n$} if every \CS matrix $A$ with trace $n$ is Gompf equivalent to $A_0$. 

\begin{remark}\label{remark:Gompfconjectureimplications} If Conjecture~\ref{conjecture:Gompf} is true for trace $n$, then Conjecture~\ref{conjecture:CS} is true for every \CS matrix with trace $n$. More generally, $\Sigma_{X_{c,d,n+kd}}^\epsilon$ is diffeomorphic to $S^4$ for any $k\in \Z$ and $\epsilon\in\Z_2$.
\end{remark}

Our first result, Theorem~\ref{theorem:A}, shows that there is a hidden symmetry between trace~$n$ \CS matrices and  trace $5-n$ \CS matrices. Theorem~\ref{theorem:A} implies that if Conjecture~\ref{conjecture:Gompf} is true for trace $n\geq 3$, then both of Conjectures~\ref{conjecture:CS} and~\ref{conjecture:Gompf} will be true simultaneously.
 \begin{theoremalpha}\label{theorem:A}There is a bijection between the set of similarity classes of trace $n$ \CS matrices and the set of similarity classes of trace $5-n$ \CS matrices. Moreover, Conjecture~\ref{conjecture:Gompf} is true for trace $n$ if and only if Conjecture~\ref{conjecture:Gompf} is true for trace $5-n$ for any integer $n$. 
 \end{theoremalpha}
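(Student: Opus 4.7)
The strategy is to exhibit the bijection as an explicit involution and analyze it through the Latimer--MacDuffee correspondence. Set $\Phi(A) := I + (A-I)^{-1}$. Since $\det(A-I) = 1$, the matrix $(A-I)^{-1}$ has integer entries, and the substitution $x \mapsto x+1$ in $f_n$ gives $f_n(x+1) = x^3 + (3-n)x^2 + (2-n)x - 1$, whose roots are $\lambda_i - 1$ for $\lambda_i$ the eigenvalues of $A$; Vieta's formulas then yield $\tr((A-I)^{-1}) = 2-n$, so $\tr(\Phi(A)) = 5-n$. Combined with $\det(\Phi(A)) = \det(A) = 1$ and $\det(\Phi(A) - I) = \det((A-I)^{-1}) = 1$, this shows $\Phi(A)$ is a trace $5-n$ Cappell--Shaneson matrix. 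Involutivity follows from $\Phi(\Phi(A)) - I = (\Phi(A) - I)^{-1} = A - I$, and $\Phi$ commutes with conjugation, so it descends to a bijection on similarity classes.

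The underlying algebraic structure is a ring isomorphism $\varphi \colon \ztheta{5-n} \to \ztheta{n}$ sending $\Theta_{5-n} \mapsto 1 + (\Theta_n - 1)^{-1}$. This is well-defined because $N(\Theta_n - 1) = -f_n(1) = 1$ shows $\Theta_n - 1$ is a unit in $\ztheta{n}$, and because the substitution $v = u/(u-1)$ transforms $f_n(u) = 0$ into $f_{5-n}(v) = 0$. Viewing the Cappell--Shaneson matrix $A$ as multiplication by $\Theta_n$ on an ideal $J \subset \ztheta{n}$ via Latimer--MacDuffee, the matrix $\Phi(A)$ acts as multiplication by $1 + (\Theta_n - 1)^{-1} = \varphi(\Theta_{5-n})$, so the bijection induced by $\Phi$ on similarity classes agrees with the bijection on ideal class monoids $C(\ztheta{n}) \to C(\ztheta{5-n})$ induced by $\varphi^{-1}$.

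To verify Gompf equivalence is preserved, compute $\varphi^{-1}(\Theta_n) - c = (\Theta_{5-n} - 1)^{-1}[(1-c)\Theta_{5-n} + c]$, so after absorbing the unit $\Theta_{5-n} - 1$ we obtain $\varphi^{-1}(\langle \Theta_n - c, d\rangle) = \langle (1-c)\Theta_{5-n} + c, d\rangle$. The Cappell--Shaneson equality $ad - bc = 1$ forces $\gcd(c-1, d) = 1$, so $1-c$ is invertible modulo $d$ and the ideal rewrites as $\langle \Theta_{5-n} - c^*, d\rangle$ where $c^* \equiv c(c-1)^{-1} \Mod{d}$. Crucially, both the modulus $d$ and the residue $c^* \Mod{d}$ depend only on $(c, d)$, not on $n$. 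Therefore $\Phi$ carries the Gompf move $X_{c,d,n} \sim_G X_{c,d,n+kd}$ to matrices similar to $X_{c^*, d, 5-n}$ and $X_{c^*, d, 5-n-kd}$, which are themselves related by a Gompf move; thus $\Phi$ preserves Gompf equivalence. Finally, since $A_n = X_{1,1,n+2}$ represents the identity ideal class and $\varphi^{-1}$ fixes the identity, $\Phi(A_n)$ is similar to $X_{1,1,3-n} = A_{1-n}$, which is Gompf equivalent to $A_0$ via $d=1$ moves; in particular $\Phi(A_0) \sim_G A_0$, so the equivalence of Gompf's conjecture for traces $n$ and $5-n$ follows formally. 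The main work lies in establishing the $d$-preservation and $n$-independence of $c^* \Mod{d}$, both of which reduce to the coprimality relation $\gcd(c-1, d) = 1$.
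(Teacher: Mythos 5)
Your proposal is correct, and it reaches the same destination as the paper by a noticeably cleaner route. The underlying ring isomorphism is literally the same map: since $N(\Theta_n-1)=-f_n(1)=1$, one computes $(\Theta_n-1)^{-1}=\Theta_n^2+(1-n)\Theta_n$, so your $1+(\Theta_n-1)^{-1}$ equals the paper's $\Theta_n^2+(1-n)\Theta_n+1$; you have simply recognized it as the M\"obius substitution $u\mapsto u/(u-1)$, which makes its involutive character obvious. What is genuinely different is the matrix-level realization $\Phi(A)=I+(A-I)^{-1}$: the paper has no such map and instead builds the bijection on similarity classes by composing three bijections through the Latimer--MacDuffee correspondence, and then must separately prove that $A$ is similar to $(A^*)^*$ (its Lemma~\ref{lemma:Gompfdoubledual}, a computation with $p_{5-n}(p_n(c))$). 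In your setup $\Phi\circ\Phi=\mathrm{id}$ on the nose and $\Phi$ commutes with conjugation, so that lemma evaporates. Your formula $c^*\equiv c(c-1)^{-1}\Mod d$ is also manifestly independent of $n$, which is exactly what makes compatibility with the move $n\mapsto n+kd$ transparent; the paper's $c^*=p_n(c)=c^2+(1-n)c+1$ depends on $n$, and one must observe $p_{n+kd}(c)\equiv p_n(c)\Mod d$. The two formulas agree because $f_n(c)=-c(c-1)(n-c-1)-1\equiv 0\Mod d$ gives $c(n-c-1)\equiv-(c-1)^{-1}$, whence $p_n(c)\equiv 1+(c-1)^{-1}=c(c-1)^{-1}\Mod d$. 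One small imprecision: $ad-bc=1$ by itself gives $\gcd(b,d)=\gcd(c,d)=1$; to get $\gcd(c-1,d)=1$ you should additionally invoke $b=(c-1)(n-c-1)$, or just read it off directly from $c(c-1)(n-c-1)\equiv-1\Mod d$. With that trivial repair, the argument is complete and, if anything, more economical than the published one.
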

 
  To prove Theorem~\ref{theorem:A}, we will explicitly give a ring isomorphism from $\Z[\Theta_n]$ to $\Z[\Theta_{5-n}]$ which induces a monoid isomorphism between $C(\Z[\Theta_n])$ and $C(\Z[\Theta_{5-n}])$. This gives a bijection between the set of similarity classes of trace $n$ \CS matrices and the set of similarity classes of trace $5-n$ \CS matrices. We will observe that the bijection is compatible with Gompf equivalence, and Theorem~\ref{theorem:A} will follow from the observation. 
    
Our second result, Theorem~\ref{theorem:B}, shows that Conjecture~\ref{conjecture:Gompf} is true for trace $n$ if $|n|$ is small. 

 \begin{theoremalpha}\label{theorem:B}Conjecture~\ref{conjecture:Gompf} is true for trace $n$ if $-64\leq n\leq 69$. \end{theoremalpha}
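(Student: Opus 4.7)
The plan is to combine the symmetry of Theorem~\ref{theorem:A} with an explicit class-by-class verification. The interval $[-64, 69]$ is invariant under the involution $n \mapsto 5-n$, so Theorem~\ref{theorem:A} immediately reduces the task to verifying Conjecture~\ref{conjecture:Gompf} for one half of the range, say $3 \leq n \leq 69$. For each such trace $n$, the Latimer--MacDuffee--Taussky correspondence identifies similarity classes of trace-$n$ \CS matrices with the ideal class monoid $C(\Z[\Theta_n])$ of the cubic order $\Z[\Theta_n]$; by computing this finite monoid through standard algorithmic techniques in the arithmetic of cubic orders, one obtains a finite list of standard \CS matrices $X_{c,d,n}$ whose Gompf equivalence to $A_0$ must be established.

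The governing principle of the verification is the following dictionary: a standard matrix $X_{c,d,n}$ representing the identity class in $C(\Z[\Theta_n])$ is similar to $A_{n-2} = X_{1,1,n}$, and the chain of Gompf moves $X_{1,1,n} \sim_G X_{1,1,n+k}$ (valid because $d=1$) shows at once that $A_{n-2} \sim_G A_0$. Hence it suffices, for each non-principal class, to produce a sequence of Gompf moves and similarity reductions that reaches a principal class. A Gompf move $X_{c,d,n} \sim_G X_{c,d,n+kd}$ transports the ideal class $[\langle \Theta_n - c, d\rangle] \in C(\Z[\Theta_n])$ to $[\langle \Theta_{n+kd} - c, d\rangle] \in C(\Z[\Theta_{n+kd}])$, and the latter may well be principal even when the former is not. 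I would therefore iterate: given a non-principal representative, search over integers $k$ until the transported ideal either becomes principal (and so Gompf equivalent to $A_0$), or admits a new representative with strictly smaller $d$ (via a similarity change to another pair $(c',d')$ defining the same ideal class), at which point the process is repeated. Terminating at $d=1$ always finishes the argument.

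The proof then consists of exhibiting, for every non-principal class in every trace in the reduced range, an explicit such reduction. I expect the main obstacle to be combinatorial rather than structural: there is no general a priori bound on the number of Gompf moves needed to principalize a given class, so termination must be certified case by case via a finite search, and the argument will essentially take the form of a table or algorithm that records the specific moves used for each non-principal class. A further technical subtlety is that $\Z[\Theta_n]$ is typically a non-maximal order in the cubic field $\Q(\Theta_n)$, so both the enumeration of classes and the principality tests must be carried out inside the full ideal class monoid of the order (including non-invertible classes) rather than in the Picard group of the ring of integers; the distinction genuinely matters, because some $X_{c,d,n}$ correspond to non-invertible ideal classes that need to be shepherded through the reduction just like the invertible ones.
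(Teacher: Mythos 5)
Your proposal follows essentially the same route as the paper: reduce to $3\leq n\leq 69$ via the $n\mapsto 5-n$ symmetry of Theorem~\ref{theorem:A}, enumerate $C(\Z[\Theta_n])$ through the Latimer--MacDuffee--Taussky correspondence (taking care, exactly as you flag, to handle the non-invertible classes of the non-maximal order --- the paper does this by hand for $n=27$ via Dedekind--Kummer), and then certify each non-principal class by an explicit finite chain of Gompf moves and similarity reductions. The only organizational difference is that the paper's termination criterion is an induction on the trace (Lemma~\ref{lemma:induction}: push $n$ down to some $n_0$ with $6-n\leq n_0\leq n-1$ already covered) rather than your descent on $d$, and of course the actual content of the proof is the resulting tables and explicit equivalence chains, which your outline correctly anticipates but does not execute.
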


To prove Theorem~\ref{theorem:B}, we first find representatives of elements of $C(\Z[\Theta_n])$. (Equivalently, we find a representative for each similarity class of trace $n$ \CS matrices.) When $\Z[\Theta_n]$ is a Dedekind domain, this task can be done using MAGMA software (see Section~\ref{section:findingrepresentatives}).

When $\Z[\Theta_n]$ is not a Dedekind domain, the current version of MAGMA cannot compute $C(\Z[\Theta_n])$. (Nonetheless, using MAGMA, we can still compute a strictly smaller subset $\operatorname{Pic}(\Z[\Theta_n])$ of $C(\Z[\Theta_n])$, consisting of the classes of invertible ideals.) We will observe that there are infinitely many integers $n$ such that $\Z[\Theta_n]$ is not a Dedekind domain. In fact, for each integer~$k$, $\Z[\Theta_{49k+27}]$ is not a Dedekind domain (see Proposition~\ref{proposition:49k+27}). Consequently, when we prove Theorem~\ref{theorem:B}, it is the most difficult to confirm that Conjecture~\ref{conjecture:Gompf} is true for trace $27$. Using Dedekind-Kummer theorem, we analyze non-invertible ideals of $\Z[\Theta_{27}]$ explicitly (for details, see Section~\ref{subsection:27}), and determine the monoid structure of $C(\Z[\Theta_{27}])$. The authors think that our method could also be used to study $C(\Z[\Theta_n])$ for general~$n$ such that $\Z[\Theta_n]$ is not a Dedekind domain.

By Theorem~\ref{theorem:A}, to prove Theorem~\ref{theorem:B}, it suffices to confirm that Conjecture~\ref{conjecture:Gompf} is true for trace $3\leq n\leq 69$. In Tables~\ref{table:3<=n<=36}--\ref{table:61<=n<=69}, we give representatives of elements of $C(\Z[\Theta_n])$ for $3\leq n\leq 69$. We have to show that the corresponding standard \CS matrices are Gompf equivalent to $A_0$. Recall that Gompf equivalence is an equivalence relation on the set of standard \CS matrices generated by similarity and $X_{c,d,n}\sim_G X_{c,d,n+kd}$ for $k\in \Z$. Understanding when two \emph{standard} \CS matrices are similar is important to study Conjecture~\ref{conjecture:Gompf}, but this seems to be a difficult question in algebraic number theory. Instead, for any given standard \CS matrix, we give a MAGMA code which gives a list of \CS matrices with sufficiently small entries in Section~\ref{section:findingrepresentatives}. Using this,  we could find several non-trivial Gompf equivalences. The authors think that finding such Gompf equivalences by hands is cumbersome. 

In \cite[Theorem~3.1]{Earle:2014-1}, Earle considered the following special family of \CS matrices
\[X_{c,d,c+2}=\begin{bmatrix}0&a&b\\0&c&d\\1&0&2\end{bmatrix},\]
and showed that $X_{c,d,c+2}$ are Gompf equivalent to $A_0$ if $0\leq c\leq 94$ and $a\neq 19,37$, or if $1\leq d\leq 35$. Earle found similar \CS matrices by hands. As an application of our method, using our MAGMA codes, we recover and generalize the result of Earle. Indeed, we show that the \CS matrices $X_{c,d,c+2}$ are Gompf equivalent to $A_0$ if $0\leq c\leq 94$, or if $1\leq d\leq 134$ by removing technical conditions on the entry $a$, and weakening the condition on the entry $d$ (see Theorem~\ref{theorem:Earlegeneralversion}).
  
Theorem~\ref{theorem:B} enables us to find new \CS spheres that are diffeomorphic to~$S^4$, which we record the result as Corollary~\ref{corollary:C}. By Remark~\ref{remark:Gompfconjectureimplications}, Corollary~\ref{corollary:C} immediately follows from Theorem~\ref{theorem:B}. 
 \begin{corollaryalpha}\label{corollary:C}Conjecture~\ref{conjecture:CS} is true for trace $n$ \CS matrices if $n$ is an integer such that $-64\leq n\leq 69$. More generally, $\Sigma_{X_{c,d,n}}^\epsilon$ is diffeomorphic to $S^4$ for any $\epsilon\in \Z_2$ and for any integers $c$, $d$ and $n$ that satisfy $f_n(c)\equiv 0\Mod d$ and $n\equiv n_0\Mod d$ for some $-64\leq n_0\leq 69$. In particular, $\Sigma_{X_{c,d,n}}^\epsilon$ is diffeomorphic to $S^4$ for any $\epsilon\in \Z_2$ if $|d|\leq 134$.
 \end{corollaryalpha}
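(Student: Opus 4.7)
The plan is to deduce Corollary~\ref{corollary:C} directly from Theorem~\ref{theorem:B} and Remark~\ref{remark:Gompfconjectureimplications}, with essentially no additional input beyond checking a modular-arithmetic observation. The first sentence of the corollary is nothing more than the statement that ``Conjecture~\ref{conjecture:Gompf} true for trace $n$'' $\Rightarrow$ ``Conjecture~\ref{conjecture:CS} true for every trace $n$ \CS matrix,'' which is exactly Remark~\ref{remark:Gompfconjectureimplications}, so this part will be a one-line invocation.

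For the ``more generally'' statement, I would proceed as follows. Given $c,d,n$ with $f_n(c)\equiv 0\Mod d$ and $n\equiv n_0\Mod d$ for some $n_0\in[-64,69]$, write $n=n_0+kd$ for some $k\in\Z$. I first need to verify that $X_{c,d,n_0}$ is a genuine \CS matrix, i.e.\ that $f_{n_0}(c)\equiv 0\Mod d$. This follows from the identity
\[ f_n(c)-f_{n_0}(c) = -(n-n_0)c^2+(n-n_0)c = -kd\cdot c(c-1), \]
so $f_{n_0}(c)\equiv f_n(c)\equiv 0\Mod d$. The generator $X_{c,d,n_0}\sim_G X_{c,d,n_0+kd}=X_{c,d,n}$ of Gompf equivalence (applied $k$ or $-k$ times) then exhibits $X_{c,d,n}$ as Gompf equivalent to $X_{c,d,n_0}$. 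By Theorem~\ref{theorem:B}, $X_{c,d,n_0}$ is Gompf equivalent to $A_0$, hence so is $X_{c,d,n}$. Since Gompf equivalent \CS matrices produce diffeomorphic \CS spheres (with either framing), $\Sigma_{X_{c,d,n}}^{\varepsilon}$ is diffeomorphic to $S^4$.

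For the ``in particular'' clause, I would note that the integers $-64,-63,\ldots,68,69$ form a set of $134$ consecutive integers. For any $d$ with $1\leq|d|\leq 134$, any block of $134$ consecutive integers contains a full system of residues modulo $d$, so in particular there exists $n_0\in[-64,69]$ with $n\equiv n_0\Mod d$. Hence the hypothesis of the previous paragraph is automatically satisfied, and the conclusion follows.

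There is essentially no obstacle here: all the substantive content is already packaged in Theorem~\ref{theorem:A} and Theorem~\ref{theorem:B}, and the corollary is a bookkeeping step. The only point requiring even a line of verification is the compatibility $f_{n_0}(c)\equiv 0\Mod d$ used above, which is immediate from the definition of $f_n$.
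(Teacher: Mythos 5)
Your proof is correct and follows the same route the paper takes (the paper simply asserts that Corollary~\ref{corollary:C} follows immediately from Theorem~\ref{theorem:B} via Remark~\ref{remark:Gompfconjectureimplications}); you have merely spelled out the details, including the useful verification that $f_{n_0}(c)\equiv f_n(c)\equiv 0\Mod d$ and the residue-system count behind the $|d|\leq 134$ clause.
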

 By Corollary~\ref{corollary:C}, to find a counterexample to Conjecture~\ref{conjecture:CS}, one should start from a \CS matrix whose trace is either greater than~$69$ or less than~$-64$. We remark that Corollary~\ref{corollary:C} gives the largest known family of \CS spheres which are diffeomorphic to~$S^4$.

\begin{remark}In Tables~\ref{table:3<=n<=36}--\ref{table:61<=n<=69}, we give the lists of representatives $(c,d,n)$ of elements of $C(\Z[\Theta_n])$ for $3\leq n\leq 69$. Each tuple $(c,d,n)$ corresponds to the standard \CS matrix $X_{c,d,n}$. For example, when $n=21$, there are three corresponding tuples $(1,1,21)$, $(5,7,21)$ and $(9,13,21)$ in Table~\ref{table:3<=n<=36}. This means that every \CS matrix $A$ with $\operatorname{tr}(A)=21$ is similar to exactly one of the following three matrices: 
\[X_{1,1,21}=\begin{bmatrix}0&1&0\\0&1&1\\ 1&0&20\end{bmatrix},\, X_{5,7,21}=\begin{bmatrix}0&43&60\\0&5&7\\1&0&16\end{bmatrix},\, X_{9,13,21}=\begin{bmatrix}0&61&88\\0&9&13\\1&0&12\end{bmatrix}. \]
Note that $X_{1,1,21}=A_{19}$. Using this computation and Theorem~\ref{theorem:A}, we can see that there are 1314 non-trivial ideal classes of $C(\Z[\Theta_n])$ for $-64\leq n\leq 69$. In particular, Corollary~\ref{corollary:C} gives at least 2628 \CS spheres that are diffeomorphic to $S^4$, and this fact is not covered by the result of Akbulut \cite{Akbulut:2010-1}.
\end{remark}

   It is natural to ask whether Corollary~\ref{corollary:C} actually gives a new infinite family of \CS matrices whose corresponding \CS spheres are diffeomorphic to $S^4$. Our final result, Corollary~\ref{corollary:D}, shows that this is the case. For this purpose, we consider the following family of \CS matrices $M_k$ ($k\in \Z$),
	\[ M_{k}=\begin{bmatrix}0&14k+7&49k+24\\0&2&7\\1&0&49k+25
	\end{bmatrix}.\] 
	Note that $M_k=X_{2,7,49k+27}$, and hence $\Sigma_{M_k}^\epsilon$ is diffeomorphic to $S^4$ for any $\epsilon\in \Z_2$ by Corollary~\ref{corollary:C}. (This fact can be also checked by using a weaker version given in \cite[Theorem~3.2]{Gompf:2010-1}.) We show that $M_k$ is not similar $A_n$ for any integers $k$ and $n$. 
 \begin{corollaryalpha}\label{corollary:D}For any integers $k$ and $\epsilon\in \Z_2$, \CS sphere $\Sigma_{M_k}^\epsilon$ corresponding to $M_k$ is diffeomorphic to $S^4$. For any integers $k$ and $n$, $M_k$ is not similar to $A_n$.
 \end{corollaryalpha}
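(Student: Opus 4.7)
The first statement is immediate from Corollary~\ref{corollary:C}: since $M_k = X_{2,7,49k+27}$ satisfies $f_{49k+27}(2) = -49(2k+1) \equiv 0 \pmod 7$ and $49k+27 \equiv 27 \pmod 7$ with $-64 \leq 27 \leq 69$, the hypotheses of that corollary are met. For the non-similarity, comparing traces forces $n = 49k+25$ whenever $M_k \sim A_n$, so it suffices to rule out $M_k \sim A_{49k+25}$. Set $n_0 = 49k+27$, $\Theta = \Theta_{n_0}$, $R = \Z[\Theta]$, $K = \Q(\Theta)$, and $I = \langle \Theta - 2, 7\rangle$. Under the Aitchison-Rubinstein bijection recalled in the introduction, $M_k = X_{2,7,n_0}$ represents $[I] \in C(R)$ while $A_{49k+25} = X_{1,1,n_0}$ represents the identity class $[R]$, so the task reduces to showing that $I$ is not principal in $R$.

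The plan is a norm obstruction. A short computation of $R/I \cong \Z[x]/(f_{n_0}(x), x-2, 7) \cong \Z/7\Z$ gives $[R:I] = 7$, so if $I = (\alpha)$ for some $\alpha \in R$ then $|N_{K/\Q}(\alpha)| = 7$. I will show instead that no element of $R$ has absolute norm $7$, using two congruences for the cubic form $N(a + b\Theta + c\Theta^2)$, whose coefficients are explicit polynomials in $n_0$ coming from the elementary symmetric functions $(e_1, e_2, e_3) = (n_0, n_0 - 1, 1)$ of the roots of $f_{n_0}$. Since $f_{n_0}(x) \equiv (x - 2)^3 \pmod 7$, substituting $\Theta \equiv 2$ yields $N(\alpha) \equiv (a + 2b + 4c)^3 \pmod 7$, so $7 \mid N(\alpha)$ forces $a + 2b + 4c \equiv 0 \pmod 7$. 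The decisive step is the stronger converse modulo $49$: if $a + 2b + 4c \equiv 0 \pmod 7$, then already $N(\alpha) \equiv 0 \pmod{49}$. Granting this, $|N(\alpha)| = 7$ is impossible and $I$ is non-principal, so $M_k \not\sim A_n$ for any integer $n$.

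The main obstacle is this mod-$49$ congruence. Its proof is a direct but unenlightening substitution: reduce the ten norm coefficients modulo $49$ using $n_0 \equiv 27 \pmod{49}$, write $a = -2b - 4c + 7t$, and check termwise that the coefficients of the resulting cubic in $(b, c, t)$ all vanish modulo $49$. Conceptually, the identity reflects the non-$7$-maximality of $R$ (cf.\ Proposition~\ref{proposition:49k+27}): in the maximal order $\O_K$ the prime $7$ decomposes as $\mathfrak{p}_1 \mathfrak{p}_2^2$ with both residue degrees equal to one, and these two distinct primes contract to the single non-invertible prime $I$ of $R$. The norm form ``sees'' this coalescence precisely as the forced lower bound $v_7(N(\alpha)) \neq 1$, which is exactly the numerical obstruction driving the argument.
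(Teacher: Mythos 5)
Your reduction of the second statement coincides with the paper's: comparing traces forces $n=49k+25$, and under the Latimer--MacDuffee--Taussky correspondence the question becomes whether $I=\langle\Theta_{49k+27}-2,7\rangle$ is principal in $R=\Z[\Theta_{49k+27}]$. From there you diverge. The paper simply notes that Proposition~\ref{proposition:49k+27} has already shown $I$ to be \emph{non-invertible}, and a principal ideal $(\alpha)$ is automatically invertible (take $J=R$ in the definition, since $(\alpha)\cdot R=(\alpha)$ is principal); so non-principality, hence non-triviality of $[I]$, is immediate and no further work is needed. You instead mount a norm obstruction: $[R:I]=7$ (correct, since $R/I\cong\Z/(7,f_{49k+27}(2))=\Z/7$), so a generator would have norm $\pm7$, which you exclude via $N(\alpha)\equiv(a+2b+4c)^3\pmod 7$ together with the claim that $7\mid a+2b+4c$ forces $49\mid N(\alpha)$. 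This route is valid, and the decisive mod-$49$ congruence is in fact true for every $k$: the Newton polygon of $f_{49k+27}(x+2)$ over $\Q_7$ has vertices $(0,\ge 2)$, $(1,1)$, $(3,0)$, because $f_{49k+27}(2)=-49(2k+1)$ while $f_{49k+27}'(2)=-7(21k+10)$ with $21k+10\not\equiv0\pmod 7$; hence the three conjugates of $\Theta-2$ have $7$-adic valuations $\ge1$, $\tfrac12$, $\tfrac12$, and $v_7\bigl(N(g(\Theta))\bigr)\ge 2$ whenever $7\mid g(2)$. But as written you only \emph{assert} this congruence (and the splitting $7\O_K=\mathfrak{p}_1\mathfrak{p}_2^2$ that explains it) rather than proving either, so the one step that actually carries your argument is left unverified. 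What the norm approach buys is independence from the invertibility machinery; what it costs is exactly this extra verification, which the paper's one-line deduction from Proposition~\ref{proposition:49k+27} sidesteps. The first statement is handled identically in both arguments, via Corollary~\ref{corollary:C}.
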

  Recall that Akbulut \cite{Akbulut:2010-1} showed that the infinite family of \CS matrices $A_n$ give \CS spheres $\Sigma_{A_n}^\epsilon$ are diffeomorphic to $S^4$ for any $\epsilon\in \Z_2$. Since $M_k$ is not similar to $A_n$ for any integers $k$ and $n$, Corollary~\ref{corollary:D} is not covered by the result of Akbulut.

 \begin{organization} In Section~\ref{section:preliminaries}, we recall several facts on \CS spheres and \CS matrices, and we discuss the correspondence between ideal class monoid and the similarity classes of \CS matrices. In Section~\ref{section:symmetry}, we prove Theorem~\ref{theorem:A}. In Section~\ref{section:idealclassmonoid}, we recall Dedekind-Kummer theorem, and show that $C(\Z[\Theta_{49k+27}])$ is not a group for any integer $k$, and discuss the structure of $C(\Z[\Theta_{27}])$. In Section~\ref{section:findingrepresentatives}, we use MAGMA software to find representatives of elements in $\operatorname{Pic}(\Z[\Theta_{n}])$. In Section~\ref{section:theoremB}, we prove Theorem~\ref{theorem:B} and Corollary~\ref{corollary:D}. In Section~\ref{section:Earle}, we give a generalization of the result of Earle.
 \end{organization}
 \begin{acknowledgement}The authors would like to thank Tetsuya Abe, Selman Akbulut, Jae Choon Cha, Hisaaki Endo, Robert Gompf, Mark Powell and Motoo Tange for their encouragements and helpful discussions. The first author would like to thank Jung Won Lee for helping him to use MAGMA software. The first author was partially supported by the POSCO TJ Park Science Fellowship.
 \end{acknowledgement}
 
\section{Preliminaries}\label{section:preliminaries}
In this section, we collect several facts on \CS spheres and matrices following \cite[Appendix]{Aitchison-Rubinstein:1984-1} and \cite{Gompf:2010-1}. 
\subsection{\CS spheres and matrices}\label{subsection:csmatrices}
Let $\SL$ be the set of $3\times 3$ integral matrices whose determinants are $1$. We say two matrices $A, B\in \SL$ are \emph{similar} if there is a matrix $C\in \SL$ such that $A=CBC^{-1}$. 
\begin{definition}
 A matrix $A\in \SL$ is a \textit{\CS matrix} if $A-I\in \SL$.
\end{definition}

	For a \CS matrix $A\in \SL$, Cappell and Shaneson \cite{Cappell-Shaneson:1976-1} constructed two homotopy 4-spheres $\Sigma_A^\varepsilon$ as follows. Let $T^3$ be the 3-torus $\R^3/\Z^3$. Since $A\in \SL$, $A$ induces an orientation-preserving diffeomorphism $f_A\colon T^3\to T^3$. Possibly after an isotopy, we can assume that $f_A$ is the identity on a neighborhood $D_y$ of some chosen point $y\in T^3$. Let $W_A$ be the mapping torus of $f_A$, that is,
	\[W_A=T^3\times [0,1]/(x,0)\sim(f_A(x),1).\]
 Since $f_A$ is the identity around the point $y$, we can regard $D_y\times S^1\subset W_A$. From the condition $\det (A-I)=1$, the Wang sequence applied to the fiber bundle $T^3\hookrightarrow W_A\to S^1$, and Van Kampen theorem show that $W_A$ is a homology $S^1\times S^3$ whose fundamental group $\pi_1(W_A)$ is normally generated by $[y\times S^1]$. If we remove $D_y\times S^1$ from $W_A$ and glue $S^2\times D^2$ along the boundary via a framing $\epsilon \in \Z_2$, then we obtain a homotopy 4-sphere $\Sigma_A^\epsilon$.
 
\begin{definition}[\CS spheres] For a \CS matrix $A$, two homotopy 4-spheres $\Sigma_A^0$ and $\Sigma_A^1$ are called \emph{\CS spheres} correspond to $A$.
\end{definition}
 \begin{remark}\label{remark:similarimpliesdiffeomorphic}From the construction of \CS homotopy 4-spheres, if $A$ and $B$ are similar \CS matrices, then $W_A$ and $W_B$ are diffeomorphic, and hence $\Sigma_A^\varepsilon$ and $\Sigma_B^\varepsilon$ are diffeomorphic.
 \end{remark}
 By Remark~\ref{remark:similarimpliesdiffeomorphic}, to study \CS spheres up to diffeomorphism, it is natural to consider the similarity classes of \CS matrices. In \cite[Appendix]{Aitchison-Rubinstein:1984-1}, the similarity classes of \CS matrices in terms of ideal classes are systematically studied using a result of Latimer-MacDuffee and Taussky \cite{Latimer-MacDuffee:1933-1,Taussky:1949-1} which we recall in below.
 
  Let $A$ be a \CS matrix with trace $n$. The characteristic polynomial of $A$ is 
	\[f_n(x) =x^3 -n x^2 +(n-1)x -1.\]
\begin{remark} For $A\in \SL$, $A$ is a \CS matrix with trace $n$ if and only if the characteristic polynomial of $A$ is $f_n(x)$.
	Note that $f_n(x)$ is irreducible over $\Z$ for all $n$ (for example, see \cite[Lemma A4]{Aitchison-Rubinstein:1984-1}).
\end{remark}
\begin{definition}[{\cite{Aitchison-Rubinstein:1984-1, Gompf:2010-1}}]We say a \CS matrix is called \emph{standard} if it is of the form 
\[X_{c,d,n}=
		\begin{bmatrix}
		0 & a & b \\
		0 & c & d \\
		1 & 0& n-c\\
		\end{bmatrix}.
		\]
\end{definition}
By the following theorem of Aitchison and Rubinstein and Remark~\ref{remark:similarimpliesdiffeomorphic}, we restrict our attention to \CS spheres $\Sigma_A^\varepsilon$ which correspond to standard \CS matrices $A$ since we are interested in their differentiable structures.
\begin{theorem}[Aitchison and Rubinstein \cite{Aitchison-Rubinstein:1984-1}]\label{csm} Every Cappell-Shaneson matrix is similar to a standard Cappell-Shaneson matrix. 
\end{theorem}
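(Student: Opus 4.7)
The plan is to realize the desired similarity as a change of $\Z$-basis of $\Z^3$. Inspecting the matrix $X_{c,d,n}$, its first column is $(0,0,1)^T$ and its second column has zero third entry; hence it suffices to produce a $\Z$-basis $\{v_1, v_2, v_3\}$ of $\Z^3$ with $v_3 = A v_1$ and $A v_2 \in \Z v_1 + \Z v_2$. Once such a basis is in hand, the trace condition $\tr(A) = n$ and the two unit-determinant conditions $\det A = \det(A - I) = 1$ force the remaining entries of the matrix representing $A$ in this basis to match $X_{c,d,n}$ exactly, with $c$ the $(2,2)$-entry and $d$ the $(2,3)$-entry; the divisibility $d \mid f_n(c)$ then follows automatically from integrality of the third column.

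The crux is to produce a primitive $v_1 \in \Z^3$ such that $\{v_1, A v_1\}$ extends to a $\Z$-basis of $\Z^3$, equivalently, such that $v_1 \wedge A v_1$ is primitive in $\wedge^2 \Z^3 \cong \Z^3$. Since $f_n$ is irreducible over $\Z$, the matrix $A$ has no rational eigenvectors and $v_1, A v_1$ are always $\Q$-linearly independent. Non-primitivity of $v_1 \wedge A v_1$ at a prime $p$ means exactly that $v_1 \bmod p$ is an eigenvector of $A \bmod p$ on $\mathbb{F}_p^3$. A short check using $\det A = \det(A-I) = 1$ rules out $A \equiv \lambda I \Mod{p}$ for any prime $p$ and any $\lambda \in \mathbb{F}_p$, so the eigenspaces of $A \bmod p$ form a union of at most three proper $\mathbb{F}_p$-subspaces of $\mathbb{F}_p^3$. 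Hence for each prime $p$ there is a good residue class for $v_1$ modulo $p$; a CRT/density argument then produces a single $v_1 \in \Z^3$ that is good modulo every prime simultaneously.

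With such a $v_1$, extend $\{v_1, A v_1\}$ to a $\Z$-basis $\{v_1, v_2, A v_1\}$ of $\Z^3$ and write $A v_2 = \alpha v_1 + \beta v_2 + \gamma\, A v_1$ with $\alpha, \beta, \gamma \in \Z$. Replacing $v_2$ by $v_2 - \gamma v_1$ preserves $v_1$ and $A v_1$, and a one-line computation gives
\[
A(v_2 - \gamma v_1) \;=\; (\alpha + \beta \gamma)\, v_1 + \beta\, (v_2 - \gamma v_1),
\]
so the new basis satisfies $A v_2 \in \Z v_1 + \Z v_2$. If the resulting change-of-basis matrix has determinant $-1$, replacing $v_2$ by $-v_2$ flips the sign without breaking the shape of the matrix (it merely changes the signs of the eventual $a$ and $d$). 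This completes the reduction to the standard form $X_{c,d,n}$.

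The main obstacle is the first step, the production of $v_1$ with primitive $v_1 \wedge A v_1$. The CRT/density argument above is standard but slightly delicate because by Chebotarev there are infinitely many primes at which $f_n$ has a root, so eigenvectors must be avoided at infinitely many places at once. A cleaner alternative is to invoke the Latimer--MacDuffee correspondence: view $\Z^3$ with the $A$-action as a rank-$1$ torsion-free $\Z[\Theta_n]$-module, i.e.\ a fractional ideal $I$ of the order $\Z[\Theta_n]$, and produce $e_1 \in I$ such that $\Z e_1 + \Z \Theta_n e_1$ is a saturated rank-$2$ sublattice of $I$; pulling back to $\Z^3$ then gives the required $v_1$, and the remaining steps go through unchanged.
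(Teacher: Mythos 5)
The paper does not prove this theorem; it cites the appendix of Aitchison--Rubinstein, so your proposal has to stand on its own. Your reduction is correct and efficient: once a primitive $v_1$ is found with $\Z v_1+\Z Av_1$ a direct summand of $\Z^3$, the substitution $v_2\mapsto v_2-\gamma v_1$ kills the $A v_1$-component of $Av_2$, the sign of the basis can be fixed by negating $v_2$, and the remaining entries are then forced into the shape $X_{c,d,n}$ by $\det A=\det(A-I)=1$ exactly as in Remark~\ref{remark:CS} and Proposition~\ref{csm2}. The genuine gap is that the one substantive step --- the existence of such a $v_1$ --- is precisely the content of the theorem, and you do not prove it. The ``CRT/density argument'' is not carried out, and it is not routine: if $v_1$ ranges over a box of side $N$, the primes at which $v_1\wedge Av_1$ can fail to be primitive go up to about $N^2$, a single $v_1$ can be bad at several such primes, and the naive union bound over primes in the range $(N^{1/2},CN^2]$ does not yield $o(N^3)$ without a careful analysis of the successive minima of the lattices $\{v:\, v\bmod p\in E_\lambda\}$. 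Your proposed ``cleaner alternative'' is circular: ``produce $e_1\in I$ such that $\Z e_1+\Z\Theta_n e_1$ is a saturated rank-$2$ sublattice of $I$'' is the identical assertion transported through the Latimer--MacDuffee dictionary, not a proof of it. This is exactly the lemma Aitchison--Rubinstein's appendix is needed for (in ideal language: every class of $C(\Z[\Theta_n])$, including the non-invertible ones, contains an ideal $\langle\Theta_n-c,d\rangle$), and it cannot be waved through.

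A secondary point: the assertion that ``a short check using $\det A=\det(A-I)=1$ rules out $A\equiv\lambda I\Mod{p}$'' is true but understated, and the case it glosses over is the one that matters for this paper. Working only modulo $p$, the two determinant conditions give $\lambda^3\equiv(\lambda-1)^3\equiv 1$ and $3\lambda^2-3\lambda+1\equiv 0\Mod{p}$, which are simultaneously solvable exactly when $p=7$ and $\lambda\equiv 2$. To exclude that case one must use the determinant conditions to second order: writing $A=2I+7B$, expanding $\det(2I+7B)=1$ and dividing by $7$ gives $\tr B\equiv 5\Mod{7}$, while $\det(I+7B)=1$ gives $\tr B\equiv 0\Mod{7}$, a contradiction. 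Since $p=7$ is precisely the prime at which $\Z[\Theta_{49k+27}]$ fails to be maximal (Proposition~\ref{proposition:49k+27}), this is not a degenerate case one may dismiss. In summary: the architecture of your argument is the right one and matches how the ideal-theoretic statement translates into matrix language, but the key existence lemma is asserted rather than proved; to complete the argument you must either execute the lattice-point count in full (with the threshold-and-successive-minima bookkeeping above) or reproduce Aitchison--Rubinstein's argument for the ideal-class statement.
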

\begin{remark}Technically, Aitchison and Rubinstein \cite{Aitchison-Rubinstein:1984-1} proved that every \CS matrix is similar to the \emph{transpose} of a standard \CS matrix. However, this is clearly an equivalent statement.
\end{remark}

\begin{remark}\label{remark:CS}Since $X_{c,d,n}$ is a \CS matrix, $\det (X_{c,d,n}-I)=1$ and $\det (X_{c,d,n})=1$. From these conditions, $b=(c-1)(n-c-1)$ and $ad-bc=1$, that is, $X_{c,d,n}$ is uniquely determined by $c,d$ and $n$.
\end{remark}
\begin{remark}\label{remark:Gompfnotation}Gompf \cite{Gompf:2010-1} considered slightly general matrices of the form
\[A=\begin{bmatrix}
		0 & a & b \\
		0 & c & d \\
		1 & e& n-c\\
		\end{bmatrix}
		\]
which Gompf called $A$ is a \emph{\CS matrix in the standard form}. Gompf proved that $\Delta^kA$ and $A\Delta^k$ are Cappell-Shanseon matrices and the corresponding Cappell-Shaneson homotopy spheres $\Sigma_{\Delta^k A}^\varepsilon$ and $\Sigma_{A\Delta^k}^\varepsilon$ are diffeomorphic to $\Sigma_{A}^\varepsilon$ for $\varepsilon=0,1$ where 
	\[
		\Delta =
		\begin{bmatrix}
		1 & -1 & 0  \\
		0 & 1  & 0 \\
		0 & 1  & 1 \\
		\end{bmatrix}.
		\]
Observe that $A$ is equivalent to $X_{c,d,n}$ as follows. (Note that the values of $c$, $d$ and $n$ are preserved.)
\[\begin{bmatrix}
1 & e & 0 \\
0 & 1 & 0 \\
0 & 0 & 1 \\
\end{bmatrix}
\begin{bmatrix}
0 & a & b \\
0 & c & d \\
1 & e & n-c \\
\end{bmatrix}\begin{bmatrix}
1 & -e & 0 \\
0 & 1  & 0 \\
0 & 0  & 1 \\
\end{bmatrix}
=
\begin{bmatrix}
0 & a+ce & b+de \\
0 & c & d \\
1 & 0 & n-c \\
\end{bmatrix}.
\]
We observe that both $\Delta^kA$ and $A\Delta^k$ are similar to $X_{c,d,n+kd}$ as follows. Note that $\Delta^kA$ and $A\Delta^k$ are similar because $\Delta^kA=\Delta^k(A\Delta^k)\Delta^{-k}$. The above argument shows that the matrix 
\[
\Delta^{k} A=
	\begin{bmatrix}
	0 & a-kc & b-kd \\
	0 & c & d \\
	1 & kc + e & kd + n-c\\
	\end{bmatrix}
\]
is similar to $X_{c,d,n+kd}$.
\end{remark}

	We end this subsection by giving a simple, algebraic characterization of standard Cappell-Shaneson matrices which will be used frequently.

	\begin{proposition}\label{csm2}
		For integers $c,d\neq 0$ and $n$, the following are equivalent.
		\begin{enumerate}
			\item $f_n (c) \equiv 0 \Mod{d}$.
			
			\item There exist integers $a$ and $b$ such that
			\[X_{c,d,n}=
			\begin{bmatrix}
			0 & a & b \\
			0 & c & d \\
			1 & 0 & n-c \\
			\end{bmatrix}
			\]
			is a standard \CS matrix.
		\end{enumerate}
	\end{proposition}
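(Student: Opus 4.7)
The plan is to turn the two defining conditions of a standard Cappell-Shaneson matrix, namely $\det X_{c,d,n}=1$ and $\det(X_{c,d,n}-I)=1$, into explicit algebraic constraints on the entries $a$ and $b$, and then recognize the resulting divisibility condition on $a$ as exactly $f_n(c)\equiv 0\Mod{d}$.

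First, I would expand both determinants along the first column of $X_{c,d,n}$, whose only nonzero entries are in rows~$1$ and~$3$. This gives
\[
\det X_{c,d,n}=ad-bc \qquad \text{and} \qquad \det(X_{c,d,n}-I)=ad-b(c-1)-(c-1)(n-c-1).
\]
Setting both equal to $1$ and subtracting the first relation from the second forces $b=(c-1)(n-c-1)$, so $b$ is uniquely determined by $c$ and $n$ (consistent with Remark~\ref{remark:CS}). Substituting back into $ad-bc=1$ yields $ad=1+bc=1+c(c-1)(n-c-1)$.

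Next, a direct expansion yields the algebraic identity
\[
1+c(c-1)(n-c-1)=-\bigl(c^{3}-nc^{2}+(n-1)c-1\bigr)=-f_n(c),
\]
so, with $c,d,n$ fixed, the two Cappell-Shaneson determinant conditions on $X_{c,d,n}$ are equivalent to the pair of relations $b=(c-1)(n-c-1)$ and $ad=-f_n(c)$.

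The equivalence $(1)\Leftrightarrow(2)$ now follows immediately. For $(2)\Rightarrow(1)$, the integer $a$ satisfies $ad=-f_n(c)$, hence $d\mid f_n(c)$. Conversely, if $f_n(c)\equiv 0\Mod{d}$, define $b:=(c-1)(n-c-1)\in\mathbb{Z}$ and $a:=-f_n(c)/d\in\mathbb{Z}$; by construction the resulting $X_{c,d,n}$ satisfies both determinant conditions and is a standard Cappell-Shaneson matrix. There is no genuine obstacle in this argument; the whole proof rests on the small polynomial identity above, which is verified by a routine expansion, so the only minor bookkeeping issue is being careful with signs during the two $3\times 3$ determinant expansions.
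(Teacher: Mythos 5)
Your proof is correct and follows essentially the same route as the paper: both reduce the two determinant conditions to $b=(c-1)(n-c-1)$ and $ad=-f_n(c)$ via the identity $f_n(c)=-c(c-1)(n-c-1)-1$, and then read off the divisibility. The only cosmetic difference is that you derive $b=(c-1)(n-c-1)$ by subtracting the two determinant equations, where the paper simply cites its Remark~\ref{remark:CS}.
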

	
	\begin{proof}We first note that $f_n(c)=c^3-nc^2+(n-1)c-1=-c(c-1)(n-c-1)-1$. Suppose that $f_n (c) \equiv 0 \Mod{d}$. Define $a,b \in \mathbb{Z}$ by
		 $f_n (c) = -ad$ and $b=(c-1)(n-c-1)$. Consider the following matrix
				\[
		A=
		\begin{bmatrix}
		0 & a & b \\
		0 & c & d \\
		1 & 0 & n-c \\
		\end{bmatrix}.
		\]
		
		Note that $A$ is a \CS matrix (and hence is equal to $X_{c,d,n}$) since 
		\[\det A=ad-bc=-f_n(c)-c(c-1)(n-c-1)=1\]
		 and 
		\[\det (A-I)= -(c-1)(n-c-1) +(ad -b(c-1))=1.\]
		
		For the converse, consider the \CS matrix		\[
		X_{c,d,n}=
		\begin{bmatrix}
		0 & a & b \\
		0 & c & d \\
		1 & 0 & n-c \\
		\end{bmatrix}.
		\]
		By Remark~\ref{remark:CS}, $f_n(c)=-c(c-1)(n-c-1)-1=-bc-1=-ad\equiv 0\Mod{d}$. 	\end{proof}	
		\subsection{Latimer-MacDuffee-Taussky correspondence}\label{subsection:Latimer-MacDuffee-Taussky}
	In this subsection, we recall a classical result due to  Latimer-MacDuffee and Taussky \cite{Latimer-MacDuffee:1933-1,Taussky:1949-1}. For more details, see Newman's book \cite{Newman:1972-1}.

	Let $R$ be an integral domain and $\mathcal{I}(R)$ be the set of nonzero ideals of $R$.
	Define an equivalence relation $\approx$ on $\mathcal{I}(R)$ by $I \approx J$ if and only if  there exist non-zero elements $\alpha ,\beta$
	such that $\alpha I = \beta J$.
	Each equivalence class is called \emph{an ideal class} and the ideal class of $I \in \mathcal{I}(R)$ is
	denoted by $[I]$.
	The set of all ideal classes is called the \emph{ideal class monoid} of $R$ denoted by $C(R)$. The multiplication is given by the multiplication of ideals: 
	$[I] \cdot [J] = [IJ]$.
	The identity element is the class of principal ideals. An ideal $I$ of $R$ is called \emph{invertible} if there exists an ideal  $J$ of $R$ such that $IJ$ is a principal ideal. The subset of $C(R)$ which consists of the ideal classes of invertible ideals of $R$ is an abelian group, called the \emph{Picard group} of $R$ and denoted by $\operatorname{Pic}(R)$.
	
	\begin{remark}
We remark that the monoid $C(R)$ is not a group in general. In fact, the following are equivalent for an integral domain $R$:
\begin{enumerate}
\item $R$ is a Dedekind domain.
\item Every ideal of $R$ is invertible.
\item $C(R)$ is a group.
\item $C(R)=\operatorname{Pic}(R)$.
\end{enumerate}
\end{remark}

\begin{example}If $R$ is the ring of integers of an algebraic number field, then $R$ is a Dedekind domain and hence $C(R)$ is a group. 
\end{example}
We are mainly interested in the special case that $R=\Z[\Theta]$ where $\Theta$ is a root of a monic polynomial $g(x) \in \mathbb{Z}[x]$ which is irreducible (over $\Z$). Note that $\Q[\Theta]$ is the number field obtained by adjoining $\Theta$ to $\Q$.

	We recall a classical result due to Latimer-MacDuffee \cite{Latimer-MacDuffee:1933-1} and Taussky \cite{Taussky:1949-1}. For simplicity and our purposes, we spell out the degree $3$ case only. For more details and generalizations, we refer the reader to \cite{Newman:1972-1}.
	\begin{theorem}[Latimer-MacDuffee \cite{Latimer-MacDuffee:1933-1}, Taussky \cite{Taussky:1949-1}]\label{theorem:LMT}
		Suppose $g \in \mathbb{Z}[x]$ is a monic, irreducible polynomial of degree $3$. Let  $\Theta$ be a root of $g$.
		Then there is a bijection between $C(\ztheta{})$ and the set of similarity classes of matrices whose characteristic polynomials are $g$. 
	\end{theorem}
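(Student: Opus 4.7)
The plan is to interpose $\Z[\Theta]$-module structures between the two sides: realize similarity classes of integer matrices with characteristic polynomial $g$ as isomorphism classes of $\Z[\Theta]$-modules of a particular type, and then identify those isomorphism classes with ideal classes in $C(\Z[\Theta])$.

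For the first identification, given an integer matrix $A$ with characteristic polynomial $g$, I would declare $\Theta \cdot v = Av$ to endow $M_A = \Z^3$ with a $\Z[\Theta]$-module structure; Cayley--Hamilton guarantees $g(A)=0$, so this action factors through $\Z[x]/(g(x)) = \Z[\Theta]$. A $\Z[\Theta]$-module isomorphism $M_A \to M_B$ is the same data as an invertible integer matrix $P$ with $PA = BP$, so similarity classes of matrices are in bijection with isomorphism classes of such modules. Next I would verify that $M_A$ is torsion-free of rank $1$ over $\Z[\Theta]$: torsion-freeness holds because, after tensoring with $\Q$, $g$ is the minimal polynomial of $A$ (irreducible of the full degree $3$), so no nonzero element of $\Z[\Theta]$ annihilates a nonzero vector; rank $1$ is forced by dimension counting, since $\dim_\Q (M_A \otimes \Q) = 3 = [\Q[\Theta]:\Q]$.

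For the second identification, I would show that finitely generated torsion-free $\Z[\Theta]$-modules of rank $1$ are precisely the nonzero ideals of $\Z[\Theta]$, up to the equivalence $\approx$ defining $C(\Z[\Theta])$. Any such module $M$ embeds into $M \otimes_{\Z[\Theta]} \Q[\Theta] \cong \Q[\Theta]$ by torsion-freeness, and its image, being finitely generated over $\Z[\Theta]$, is a fractional ideal; clearing denominators yields an honest nonzero ideal $I$. A $\Z[\Theta]$-module isomorphism between two such ideals $I, J \subset \Q[\Theta]$ is automatically multiplication by an element of $\Q[\Theta]^\times$, which translates into the relation $\alpha I = \beta J$ with $\alpha, \beta \in \Z[\Theta]\setminus\{0\}$ defining equality in $C(\Z[\Theta])$. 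Conversely, given a nonzero ideal $I$, one picks a $\Z$-basis (it exists because $I$ is a finitely generated torsion-free abelian group of rank $3$ inside $\Z[\Theta] \cong \Z^3$) and reads off the matrix of the multiplication-by-$\Theta$ endomorphism; its characteristic polynomial is $g$ because $\Theta$ itself satisfies $g$.

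The main obstacle is the realization of a rank-$1$ torsion-free module as a fractional ideal, since $\Z[\Theta]$ need not be integrally closed --- the non-Dedekind case $n = 49k+27$ flagged in the introduction is exactly this phenomenon. One must avoid any appeal to unique factorization of ideals; fortunately, embedding $M$ into $\Q[\Theta]$ via torsion-freeness requires only that $\Z[\Theta]$ is a Noetherian domain with fraction field $\Q[\Theta]$. With that step in hand, the remaining checks --- well-definedness under change of $\Z$-basis and the fact that the two constructions are mutually inverse --- are mechanical.
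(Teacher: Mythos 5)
Your argument is correct, and it is worth pointing out that the paper does not actually prove Theorem~\ref{theorem:LMT}: it cites Latimer--MacDuffee and Taussky and only records the explicit recipe for the bijection (the ideal generated by the coordinates of a $\Theta$-eigenvector). Your module-theoretic proof is the standard modern one, and it specializes to that recipe: a $\Z[\Theta]$-linear embedding $\Z^3\to\Q[\Theta]$ intertwining $A$ with multiplication by $\Theta$ is given by a row vector $y$ with $yA=\Theta y$, i.e.\ an eigenvector of $A^{T}$, and its image is exactly the ideal $\langle y_1,y_2,y_3\rangle$; the paper's use of a column eigenvector therefore computes your correspondence for $A^{T}$ rather than $A$, a harmless discrepancy since transposition permutes the similarity classes in question. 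What your route buys is precisely what this paper needs: you never invoke unique factorization or invertibility of ideals, only that $\Z[\Theta]$ is a Noetherian domain with fraction field $\Q[\Theta]$, so the bijection remains valid for the non-maximal orders such as $\Z[\Theta_{49k+27}]$, where $C(\Z[\Theta_n])$ is only a monoid. Two spots deserve one more line each. First, in the converse direction, the fact that $\Theta$ satisfies $g$ only shows that the minimal polynomial of the multiplication-by-$\Theta$ matrix divides $g$; you should add that irreducibility of $g$ together with $\rank_{\Z} I=3$ forces the characteristic polynomial to equal $g$. Second, the paper conjugates by matrices in $\SL$ rather than $\GL$; since the degree is odd, $-I$ has determinant $-1$ and commutes with everything, so the two notions of integral similarity coincide and your $\GL$-statement matches the paper's.
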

	
	We describe an explicit description of the bijection. Let $A$ be a $3\times 3$ matrix whose characteristic polynomial is $g$ and let $K=\qtheta{}$.
	Regard $A$ as a $K$-linear map $A\colon K^3 \to K^3$.	 Then $\Theta$ is an eigenvalue of $A$ and there exists a corresponding eigenvector in $K^3$.
	In addition, the eigenvalues of $A$ are distinct, 
	because $g$ is irreducible over $\mathbb{Q}$.
	It follows that any two eigenvectors of $A$ corresponding to $\Theta$ are proportional.
	Let  $x = (x_1,x_2, x_3)$ be an eigenvector of $A$ corresponding to $\Theta$.
	We may assume that each $x_i$ lies in $\mathbb{Z}[\Theta]$ by multiplying some integer. Let $I$ be the $\mathbb{Z}$-module generated by $x_1,x_2$ and $x_3$. Then, $I$ is an ideal of $\ztheta{}$. The ideal class $[I]\in C(\Z[\Theta])$ is independent of the choice of an eigenvector $(x_1,x_2,x_3)$, and called \emph{the ideal class which corresponds to $A$}.

	\subsection{The ideal class which corresponds to a Cappell-Shaneson matrix}
	Aitchison and Rubinstein \cite{Aitchison-Rubinstein:1984-1} applied  Theorem~\ref{theorem:LMT} to Cappell-Shaneson matrices which we recall in below for the reader's convenience. Let $\Theta _n$ be a root of $f_n(x)=x^3-nx^2+(n-1)x-1$.	Recall that the set of Cappell-Shaneson matrices with trace $n$ is exactly the set of $3\times 3$ integral matrices $A$ whose characteristic polynomial is $f_n(x)$. Since $f_n(x)$ is irreducible, Theorem~\ref{theorem:LMT} gives a bijection between the set of similarity classes of Cappell-Shaneson matrices with trace $n$
	and $C(\ztheta{n})$. We will explicitly describe the bijection.

	Consider a Cappell-Shaneson matrix with trace $n$,
	\[
	X_{c,d,n}=\begin{bmatrix}
	0 & a & b \\
	0 & c & d \\
	1 & 0 & n-c \\
	\end{bmatrix}.\]
	We find an eigenvector $x=(x_1, x_2, x_3)\in \ztheta{n}^3$ of $X_{c,d,n}$ corresponding to $\Theta _n$.	\[
	\begin{bmatrix}
	0 & a & b \\
	0 & c & d \\
	1 & 0 & n-c \\
	\end{bmatrix}
	\begin{bmatrix}
	x_1 \\
	x_2 \\
	x_3 \\
	\end{bmatrix}
	=
	\begin{bmatrix}
	\Theta_n x_1 \\
	\Theta_n x_2 \\
	\Theta_n x_3 \\
	\end{bmatrix}
	,\quad\quad
	\begin{cases}
	a x_2 +b x_3 =\Theta _n x_1, \\
	c x_2 +d x_3 =\Theta _n x_2, \\
	x_1+(n-c) x_3 =\Theta_n x_3. \\
	\end{cases}
	\]
	In particular, $(x_1,x_2,x_3)=((\Theta_n-n+c)(\Theta_n-c),d,\Theta_n-c)$ is  an eigenvector of $A$ in $\Z[\Theta_n]^3$ with the eigenvalue~$\Theta_n$.	Note that $\langle (\Theta_n-n+c)(\Theta_n-c),d,\Theta_n-c\rangle=\langle \Theta_n-c,d\rangle$. Hence the ideal class $[\langle\Theta_n-c,d\rangle]$ corresponds to the standard \CS matrix $X_{c,d,n}$ by Theorem~\ref{theorem:LMT}. 
	\begin{proposition}[{\cite[page 44]{Aitchison-Rubinstein:1984-1}}]\label{proposition:Aitchison-Rubinstein}
		There is a one-to-one correspondence between the set of similarity classes of 
		Cappell-Shaneson matrices with trace $n$ and $C(\ztheta{n} )$, which is defined by
		\[X_{c,d,n}=
		\begin{bmatrix}
		0 & a & b \\
		0 & c & d \\
		1 & 0 & n-c \\
		\end{bmatrix}
		\mapsto 
		[\langle \Theta_n -c,d \rangle ]
		\]
		where $f_n(c)\equiv0\Mod{d}$, $b=(c-1)(n-c-1)$ and $ad-bc=1$.
	\end{proposition}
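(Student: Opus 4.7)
The plan is to combine Theorem~\ref{csm} (every CS matrix is similar to a standard one) with Theorem~\ref{theorem:LMT} (the Latimer-MacDuffee-Taussky correspondence). Since CS matrices with trace $n$ are exactly the $3\times 3$ integral matrices whose characteristic polynomial is $f_n(x)$, and $f_n$ is irreducible, Theorem~\ref{theorem:LMT} already gives a bijection between $C(\Z[\Theta_n])$ and the set of similarity classes of trace $n$ CS matrices. The only real content is therefore to identify the ideal corresponding to a standard representative $X_{c,d,n}$ under the recipe described after Theorem~\ref{theorem:LMT}.

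Following that recipe, the first step is to produce an eigenvector of $X_{c,d,n}$ with eigenvalue $\Theta_n$ that lies in $\Z[\Theta_n]^3$. Writing the eigenvalue equation $X_{c,d,n}(x_1,x_2,x_3)^{\mathrm{T}}=\Theta_n(x_1,x_2,x_3)^{\mathrm{T}}$ gives the three scalar equations
\[ax_2+bx_3=\Theta_n x_1,\quad cx_2+dx_3=\Theta_n x_2,\quad x_1+(n-c)x_3=\Theta_n x_3.\]
The second and third equations can be solved directly: taking $x_3=\Theta_n-c$ yields $x_1=(\Theta_n-n+c)(\Theta_n-c)$, and taking $x_2=d$ is consistent with the second equation. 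A short check using $b=(c-1)(n-c-1)$ and $ad-bc=1$ confirms the first equation is also satisfied. So
\[(x_1,x_2,x_3)=\bigl((\Theta_n-n+c)(\Theta_n-c),\,d,\,\Theta_n-c\bigr)\in\Z[\Theta_n]^3\]
is an eigenvector, and by Theorem~\ref{theorem:LMT} the ideal class corresponding to $X_{c,d,n}$ is $[\langle x_1,x_2,x_3\rangle]$.

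The second step is to simplify this ideal. Since $x_1=(\Theta_n-n+c)\cdot x_3$ lies in the ideal generated by $x_3$ and $x_2$, we obtain
\[\langle x_1,x_2,x_3\rangle=\langle d,\,\Theta_n-c\rangle,\]
which is the ideal named in the statement. Together with Theorem~\ref{csm} (to ensure every similarity class has a standard representative) and Proposition~\ref{csm2} (to describe exactly which triples $(c,d,n)$ give rise to standard CS matrices), this establishes the claimed bijection.

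The only potentially subtle point is the well-definedness: different standard representatives in the same similarity class must give the same ideal class. This is automatic from Theorem~\ref{theorem:LMT}, whose bijection is defined on similarity classes, so no additional verification is required. No step should present a serious obstacle; the proof is essentially a direct substitution into the Latimer-MacDuffee-Taussky correspondence.
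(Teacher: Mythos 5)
Your proposal is correct and follows essentially the same route as the paper: apply the Latimer--MacDuffee--Taussky correspondence, solve the eigenvalue equations of $X_{c,d,n}$ to get the eigenvector $\bigl((\Theta_n-n+c)(\Theta_n-c),\,d,\,\Theta_n-c\bigr)$, and observe that the $\Z$-module it generates equals $\langle \Theta_n-c,d\rangle$ since the first coordinate is a multiple of the third. The paper's derivation is identical, down to the same simplification of the ideal.
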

	
	\begin{remark}\label{remark:changenbyc}For $k\in \Z$, by Proposition~\ref{proposition:Aitchison-Rubinstein}, $X_{c,d,n}$ and $X_{c+kd,d,n}$ are similar because $\langle\Theta_n-c,d\rangle=\langle\Theta_n-c-kd,d\rangle$.
	\end{remark}

	\subsection{Gompf equivalences and a reformulation of Gompf conjecture}\label{subsection:Gompfequivalence}
	In \cite{Gompf:2010-1}, Gompf introduced a certain equivalence relation (which we call \emph{Gompf equivalences}) between standard \CS matrices which preserve the diffeomorphism types of the corresponding \CS homotopy 4-spheres. We recall Gompf equivalences and give a reformulation of Conjecture~\ref{conjecture:Gompf} in Conjecture~\ref{conjecture:reformulation}.
		
	As in Remark~\ref{remark:Gompfnotation}, let $\Delta$ be the following matrix,
		\[
		\Delta =
		\begin{bmatrix}
		1 & -1 & 0  \\
		0 & 1  & 0 \\
		0 & 1  & 1 \\
		\end{bmatrix}.
		\]
	\begin{theorem}[{\cite[page 1673]{Gompf:2010-1}}]\label{theorem:Gompf}
		
		Let $A$ be a \CS matrix given by 
		\[\begin{bmatrix}0&a&b\\0&c&d\\ 1&e&n-c\end{bmatrix}.\]
		Then, $A \Delta^k$ and $\Delta^kA$ are also \CS matrices
		and corresponding \CS spheres $\Sigma ^\varepsilon _{A \Delta^k}$ and  $\Sigma^\varepsilon_{\Delta^kA}$ are diffeomorphic to $\Sigma ^\varepsilon _{A}$ 
		for every integer $k$ and $\epsilon\in \Z_2$.
	\end{theorem}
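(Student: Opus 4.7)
The plan is to separate the proof into an algebraic step and a $4$-manifold topology step.

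For the algebraic step, I would invoke Remark~\ref{remark:Gompfnotation}, which exhibits an explicit conjugation in $\SL$ showing that $\Delta^k A$ is similar to the standard matrix $X_{c,d,n+kd}$. By Proposition~\ref{csm2}, $X_{c,d,n+kd}$ is a \CS matrix provided $f_{n+kd}(c) \equiv 0 \Mod d$, and one checks
\[
f_{n+kd}(c) - f_n(c) = -kd\, c(c-1),
\]
which is divisible by $d$. Hence the hypothesis $f_n(c) \equiv 0 \Mod d$ transfers to $f_{n+kd}$, so $X_{c,d,n+kd}$ (and therefore $\Delta^k A$) is a \CS matrix. Since $A\Delta^k = A(\Delta^k A) A^{-1}$ is similar to $\Delta^k A$, it is also a \CS matrix.

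For the diffeomorphism of \CS spheres, Remark~\ref{remark:similarimpliesdiffeomorphic} reduces the problem to showing $\Sigma^\varepsilon_{X_{c,d,n+kd}} \cong \Sigma^\varepsilon_{X_{c,d,n}}$ for each $\varepsilon\in\Z_2$. The key geometric observation is that $f_\Delta:T^3\to T^3$ preserves the sub-2-torus $T=\{x_2=0\}$ setwise; since $\Delta - I$ has rank one with image generated by $(-1, 0, 1)$, the diffeomorphism $f_{\Delta^k}$ acts as a shear in the $(-1,0,1)$ direction proportional to $x_2$. Replacing the monodromy $f_A$ by $f_{\Delta^k}\circ f_A$ therefore performs a controlled modification of the mapping torus $W_A$ along the 3-dimensional submanifold swept out by $T$, and the approach is to realize this modification at the level of handle diagrams of the \CS spheres.

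The main obstacle is to show that after the \CS surgery (which removes $D_y\times S^1$ and glues in $S^2\times D^2$), this modification becomes trivial, so that the resulting 4-manifolds are diffeomorphic. My proposed strategy is to choose the fixed point $y$ on $T$ so that the \CS surgery neighborhood meets the shearing region, and then to exhibit compatible handlebody descriptions of $\Sigma^\varepsilon_{X_{c,d,n}}$ and $\Sigma^\varepsilon_{X_{c,d,n+kd}}$ differing by $k$ local moves; the plan is to absorb these via an introduced canceling $2/3$-handle pair followed by explicit handle slides, using the rank-one structure of $\Delta - I$ to ensure the relevant attaching curves become null-homotopic after the surgery. This Kirby-calculus step is expected to be the hardest part, since generic analogous modifications do change the smooth structure of a 4-manifold, so the argument must exploit both the specific form of $\Delta$ and the precise geometry of the \CS surgery.
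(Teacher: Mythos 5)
Your algebraic step is correct and coincides with what the paper already records in Remark~\ref{remark:Gompfnotation}: the explicit conjugation bringing $\Delta^k A$ to the standard form $X_{c,d,n+kd}$, the divisibility check $f_{n+kd}(c)-f_n(c)=-kd\,c(c-1)\equiv 0\Mod{d}$, and the similarity $A\Delta^k=A(\Delta^kA)A^{-1}$ are all fine. Be aware, though, that the paper itself does not prove this theorem: it is imported from Gompf's paper (hence the citation in the theorem header), and the only content the present paper supplies is exactly the algebraic bookkeeping you reproduced, together with the reduction (via Remark~\ref{remark:similarimpliesdiffeomorphic}) to showing $\Sigma^\varepsilon_{X_{c,d,n+kd}}\cong\Sigma^\varepsilon_{X_{c,d,n}}$.

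That reduction is where the entire substance of the theorem lives, and your treatment of it is a research plan rather than a proof. You correctly observe that $f_{\Delta^k}$ is a shear supported away from the $2$-torus $\{x_2=0\}$ and that changing the monodromy by $f_{\Delta^k}$ modifies the mapping torus along a codimension-one piece, but the decisive step --- why this modification becomes trivial after removing $D_y\times S^1$ and gluing in $S^2\times D^2$ --- is deferred to an unexecuted Kirby-calculus scheme (``the plan is to absorb these\dots'', ``expected to be the hardest part''). In particular, arranging that the relevant attaching curves become \emph{null-homotopic} after the surgery is nowhere near enough: null-homotopic attaching circles do not permit smooth handle cancellation (if they did, these homotopy $4$-spheres would be trivially standard), and, as you yourself note, generic modifications of this kind do change smooth structures. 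This is precisely the delicate point that Gompf's argument settles, and --- as the introduction of this paper emphasizes --- Gompf's proof involves no handlebody diagrams at all, so your proposed route is not a reconstruction of the cited argument but an alternative whose key step is missing. As written, the proposal establishes the easy matrix assertions but not the diffeomorphism claim.
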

	\begin{remark}In Remark~\ref{remark:Gompfnotation}, we remarked that if $A$ is a \CS matrix given by 
		\[A=\begin{bmatrix}0&a&b\\0&c&d\\ 1&e&n-c\end{bmatrix},\]
		then $A$ is similar to $X_{c,d,n}$ and $\Delta^kA$ and $A\Delta^k$ are similar to $X_{c,d,n+kd}$. We know that similar \CS matrices give diffeomorphic homotopy 4-spheres by Remark~\ref{remark:similarimpliesdiffeomorphic}. Therefore, the content of Theorem~\ref{theorem:Gompf} is that two standard \CS matrices $X_{c,d,n}$ and $X_{c,d,n+kd}$ give diffeomorphic homotopy 4-spheres for any integer $k$.
	
	\end{remark}
\begin{definition}[Gompf equivalence] Define an equivalence relation $\sim$, called \emph{Gompf equivalence}, on the set of standard \CS matrices generated by $\sim_S$ and $\sim_G$ where
\begin{alignat*}{2}&X_{c_0,d_0,n}\sim_S  X_{c_1,d_1,n}\quad&&\textrm{~if~}[\langle \Theta_n-c_0,d_0\rangle]=[\langle \Theta_n-c_1,d_1\rangle]\in C(\Z[\Theta_n]),\\
&X_{c,d,n}\sim_GX_{c,d, n+kd}\quad&&\textrm{~if~} k\in \Z.
\end{alignat*}

\end{definition}
	
	Using Theorem~\ref{theorem:Gompf} and Aitchison-Rubinstein's computation of $C(\Z[\Theta_n])$ for small $n$, Gompf proved that Conjecture~\ref{conjecture:Gompf} is true for trace $n$ if $-6\leq n\leq 9$ or $n=11$. In Section~\ref{subsection:proofofTheoremB}, we will show that Conjecture~\ref{conjecture:Gompf} is true for trace $n$ if $-64\leq n \leq 69$.

	\begin{theorem}[{\cite[Theorem 3.2]{Gompf:2010-1}}]\label{eval9}Conjecture~\ref{conjecture:Gompf} is true for trace $n$ if $-6\leq n\leq 9$ or $n=11$.
	\end{theorem}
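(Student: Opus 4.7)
The plan is to reduce the statement to a finite computation in algebraic number theory, combined with an explicit search for Gompf moves. By the Aitchison-Rubinstein correspondence (Proposition~\ref{proposition:Aitchison-Rubinstein}), the similarity classes of trace $n$ Cappell-Shaneson matrices are in bijection with the ideal class monoid $C(\Z[\Theta_n])$ via $X_{c,d,n} \mapsto [\langle \Theta_n - c, d\rangle]$, so to prove Conjecture~\ref{conjecture:Gompf} for trace $n$ it suffices to take one standard representative $X_{c,d,n}$ for each element of $C(\Z[\Theta_n])$ and exhibit a sequence of Gompf moves from $X_{c,d,n}$ to $A_0$. The key observation enabling this is that $\langle \Theta_m - 1, 1\rangle = \Z[\Theta_m]$ is the identity of $C(\Z[\Theta_m])$ for every $m$, so $X_{1,1,m}$ is the standard representative of the trivial ideal class in every trace. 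Since these matrices all have $d=1$, the moves $X_{1,1,m} \sim_G X_{1,1,m+k}$ for $k \in \Z$ chain them together and link each one to $A_0 = X_{1,1,2}$. Hence the task is to connect every non-trivial representative to some $X_{1,1,m}$.

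The first main step is to compute $C(\Z[\Theta_n])$ explicitly for each $n$ in the range $-6 \leq n \leq 9$ and $n = 11$. For these small values of $|n|$ one verifies that each $\Z[\Theta_n]$ is the full ring of integers of $\Q[\Theta_n]$ (so it is Dedekind and $C(\Z[\Theta_n]) = \operatorname{Pic}(\Z[\Theta_n])$ is an honest group), then applies standard class number algorithms (Minkowski bounds plus factoring small primes via Dedekind-Kummer) to list representative ideals $\langle \Theta_n - c, d\rangle$ with $|c|, |d|$ small.

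The second main step is, for each nontrivial representative $X_{c,d,n}$, to find a chain of Gompf equivalences terminating at some $X_{1,1,m}$. The available moves are similarity $\sim_S$, which by Proposition~\ref{proposition:Aitchison-Rubinstein} corresponds to equality of ideal classes in a \emph{fixed} ring $\Z[\Theta_n]$ (in particular Remark~\ref{remark:changenbyc} gives $X_{c,d,n} \sim_S X_{c+kd,d,n}$), and $\sim_G$ which shifts the trace via $X_{c,d,n} \sim_G X_{c,d,n+kd}$ at the cost of moving to a different ring $\Z[\Theta_{n+kd}]$. The strategy is to apply a $\sim_G$ move to some new trace $n' = n + kd$ for which the ideal $\langle \Theta_{n'} - c, d\rangle$ becomes principal in $\Z[\Theta_{n'}]$; if this succeeds then $X_{c,d,n'} \sim_S X_{1,1,n'}$ and we are done. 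More generally one iterates: shift the trace, renormalize $c$ modulo $d$ via $\sim_S$, and repeat, producing a chain where the ideal class eventually trivializes.

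The main obstacle is the search problem in Step~3: the Gompf relation generates an infinite graph on standard matrices, and there is no a priori bound on how many moves are needed to reach $A_0$, nor an obvious invariant guaranteeing that such a chain exists. In practice one expects to succeed by bounded exploration because the class numbers involved are tiny (often $1$, occasionally $2$ or $3$), but the case analysis must be carried out matrix by matrix. For the range $-6 \leq n \leq 9$ and $n = 11$ the number of nontrivial classes is small enough that an exhaustive hand search through short chains of $\sim_G$ and $\sim_S$ moves should terminate; the verification that each discovered chain of moves is valid ultimately reduces to checking principality of specific ideals $\langle \Theta_{n'} - c, d\rangle$ in the relevant rings $\Z[\Theta_{n'}]$, which is a routine (if tedious) computation.
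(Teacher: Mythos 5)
Your strategy is exactly the one this paper attributes to Gompf and itself deploys (in amplified form) to prove Theorem~\ref{theorem:B}: pass from similarity classes of trace~$n$ Cappell--Shaneson matrices to $C(\Z[\Theta_n])$ via Proposition~\ref{proposition:Aitchison-Rubinstein}, note that the trivial class in every trace is represented by $X_{1,1,m}$ and that all of these are linked to $A_0$ by $\sim_G$ since $d=1$, and then connect each nontrivial representative to some $X_{1,1,m}$ by alternating $\sim_G$ and $\sim_S$. Be aware, though, that the paper does not prove this statement at all --- it is quoted verbatim from Gompf's paper --- so the only in-house point of comparison is the proof of Theorem~\ref{theorem:B} in Section~\ref{section:theoremB}, which follows your outline but additionally exploits the trace $n\leftrightarrow 5-n$ symmetry of Theorem~\ref{theorem:A} and the inductive criterion of Lemma~\ref{lemma:induction} to organize the search for move chains.

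The one substantive shortcoming is that your write-up stops at the level of strategy: for a statement of this kind the finite verification \emph{is} the proof, and you exhibit neither the class monoid computations nor the actual chains of moves, while correctly observing that the search has no a priori termination guarantee. Fortunately, in this range the verification is nearly vacuous: each $\Z[\Theta_n]$ with $-6\le n\le 9$ or $n=11$ is indeed the maximal order (as you assert; the discriminants $n(n-2)(n-3)(n-5)-23$ are squarefree except for the value $49$ at $n=-1,6$, where the ring is still maximal), and $C(\Z[\Theta_n])$ is trivial for every such $n$ except $n=-5$, which carries a single nontrivial class represented by $(2,3,-5)$. Since $-5\equiv 1\Mod{3}$, one has $(2,3,-5)\sim_G(2,3,1)$, and $C(\Z[\Theta_1])$ is trivial, so $(2,3,1)\sim_S(1,1,1)\sim_G(1,1,2)$. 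Supplying this short case analysis (and the underlying class computations, which your Minkowski-bound/Dedekind--Kummer plan handles correctly) would complete the argument.
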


	We end this preliminary section by giving a reformulation of Conjecture~\ref{conjecture:Gompf}. This reformulation will be convenient to give the proof of Theorem~\ref{theorem:B} given in Section~\ref{section:theoremB}. Let $\Theta_n$ be a root of a polynomial $f_n(x)=x^3-nx^2+(n-1)x-1$. Consider 
	\[\mathcal{C}\mathcal{S}=\{(c,d,n)\in \Z^3\mid f_n(c)\equiv0\Mod{d}\textrm{ and }d\neq 0\}.\]
By Proposition~\ref{csm2}, there is a bijection between $\mathcal{C}\mathcal{S}$ and the set of standard \CS matrices such that the tuple $(c,d,n)\in \mathcal{C}\mathcal{S}$ corresponds to the standard \CS matrix
\[X_{c,d,n}=\begin{bmatrix}
		0 & a & b  \\
		0 & c & d \\
		1 & 0 & n-c \\
		\end{bmatrix}\]
		where $b=(c-1)(n-c-1)$ and $ad-bc=1$. (In particular, $a$ and $b$ are determined by $c$, $d$ and $n$.) 

 We define an equivalence relation $\sim$ on $\mathcal{C}\mathcal{S}$ generated by $\sim_S$ and $\sim_G$ where
\begin{alignat*}{2}&(c_0,d_0,n)\sim_S  (c_1,d_1,n)\quad&&\textrm{~if~}[\langle \Theta_n-c_0,d_0\rangle]=[\langle \Theta_n-c_1,d_1\rangle]\in C(\Z[\Theta_n]),\\
&(c,d,n)\sim_G(c,d, n+kd)\quad&&\textrm{~if~} k\in \Z.
\end{alignat*}
\begin{conjecture}\label{conjecture:reformulation}For every $(c,d,n)\in \mathcal{C}\mathcal{S}$, $(c,d,n)\sim (1,1,2)$.
\end{conjecture}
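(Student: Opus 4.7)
The plan is to attack Conjecture~\ref{conjecture:reformulation} in the same spirit as Theorem~\ref{theorem:B}, treating it as an enumerative problem guided by algebraic number theory. First, using the symmetry of Theorem~\ref{theorem:A}, one reduces to traces $n\geq 3$, since the bijection between trace $n$ and trace $5-n$ Cappell--Shaneson matrices is compatible with Gompf equivalence. Then by the Latimer--MacDuffee--Taussky correspondence (Proposition~\ref{proposition:Aitchison-Rubinstein}), it suffices to exhibit, for each ideal class $[\langle \Theta_n-c,d\rangle] \in C(\Z[\Theta_n])$, a chain of $\sim_S$ and $\sim_G$ moves joining $(c,d,n)$ to $(1,1,2)$.

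For a single fixed trace the procedure I would follow is: (i) list representatives of $C(\Z[\Theta_n])$ using MAGMA when $\Z[\Theta_n]$ is a Dedekind domain, and handle the non-invertible part of the monoid by a Dedekind--Kummer ramification analysis when it is not (for instance when $n \equiv 27 \pmod{49}$, as discussed for Proposition~\ref{proposition:49k+27}); (ii) for each representative $(c,d,n)$, apply $\sim_G$ to replace $n$ by $n+kd$ for a well-chosen $k$, and then use $\sim_S$ inside the new trace to merge the resulting class with a simpler one; (iii) iterate until the representative collapses to $(1,1,2)$. Since $\sim_G$ is the only move that crosses between traces, the whole problem becomes a connectivity question on the infinite graph whose vertices are ideal classes indexed by trace and whose edges are the $\sim_G$-moves, with the base vertex $(1,1,2)$.

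The principal obstacle is that, while the above procedure terminates for any trace treatable by direct computation (and Theorem~\ref{theorem:B} realizes this for $-64\leq n\leq 69$), there is no a priori bound on the class numbers of $\Z[\Theta_n]$ as $|n|$ grows, nor a guarantee that the $\sim_G$ move simplifies rather than complicates a given representative. A complete proof would require either an inductive scheme showing that every similarity class of trace $n$ is $\sim_G$-equivalent to one of strictly smaller trace in absolute value, eventually bottoming out in the range handled by Theorem~\ref{theorem:B}, or a uniform effective algorithm terminating at $(1,1,2)$. The key step I expect to be genuinely hard is identifying a monotonic invariant---perhaps the norm of $\langle \Theta_n-c,d\rangle$, the Minkowski-reduced basis of this ideal, or some discriminant-type quantity---that strictly decreases under a carefully chosen composite $\sim_S\sim_G$ move. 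Since the conjecture is equivalent to the Gompf conjecture, which has stood open for over a decade, I do not expect a full proof without such a new algebraic or geometric input.
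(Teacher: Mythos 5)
The statement you were asked to prove is an open conjecture: Conjecture~\ref{conjecture:reformulation} is the paper's reformulation of Gompf's Conjecture~\ref{conjecture:Gompf}, and the paper does not prove it --- it only establishes the partial result Theorem~\ref{theorem:B} (traces $-64\leq n\leq 69$) via exactly the strategy you describe: reduce to $n\geq 3$ by the trace $n\leftrightarrow 5-n$ symmetry, enumerate ideal classes of $\Z[\Theta_n]$ (with a separate Dedekind--Kummer analysis when $\Z[\Theta_n]$ fails to be Dedekind, as at $n=27$), and connect each representative to $(1,1,2)$ by explicit chains of $\sim_S$ and $\sim_G$ moves found computationally. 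Your proposal correctly identifies both the method for the provable range and the genuine obstruction to a full proof --- the absence of any monotone quantity guaranteeing that the $\sim_G$/$\sim_S$ moves terminate at $(1,1,2)$ for arbitrary trace --- so your assessment matches the state of the art rather than constituting a gap on your part.
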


\begin{definition}For an integer $n$, we say \emph{Conjecture~\ref{conjecture:reformulation} is true for trace $n$} if for any integers $c$ and $d$ such that $(c,d,n)\in \mathcal{CS}$, $(c,d,n)\sim(1,1,2)$. 
\end{definition}

\begin{remark}By Proposition~\ref{proposition:Aitchison-Rubinstein}, $(c_0,d_0,n)\sim_S(c_1,d_1,n)$ if and only if $X_{c_0,d_0,n}$ and $X_{c_1,d_1,n}$ are similar. The second relation $\sim_G$ corresponds to the equivalence relation $X_{c,d,n}\sim_G X_{c,d,n+kd}$. It is clear that Conjecture~\ref{conjecture:Gompf} for trace $n$ is equivalent to Conjecture~\ref{conjecture:reformulation} since $A_0=X_{1,1,2}$.
\end{remark}

\begin{remark}\label{remark:identity}The pair $(1,1,n+2)\in \mathcal{C}\mathcal{S}$ corresponds to the trivial element of the ideal class monoid $C(\Z[\Theta_{n+2}])$ because $\langle\Theta_{n+2}-1,1\rangle$ is principal. Since $(1,1,n+2)\sim_G(1,1,2)$, we do not have to consider the trivial element of $C(\Z[\Theta_{n+2}])$. (In fact, $X_{1,1,n+2}=A_{n}$ and, as mentioned in the introduction, it has been known that $\Sigma_{A_{n}}^\varepsilon$ is diffeomorphic to $S^4$ for $\varepsilon=0,1$ and $n\in \Z$.)
\end{remark}

	\section{Symmetry between Cappell-Shaneson matrices}\label{section:symmetry}
	In this section, we prove Theorem~\ref{theorem:A} which says that Conjecture~\ref{conjecture:Gompf} for the trace $n$ case is equivalent to the trace $5-n$ case. Throughout this section, let $\Theta_n$ be a root of $f_n(x) =x^3 -n x^2 +(n-1)x -1$ for each integer $n$. We give a ring isomorphism between $\Z[\Theta_n]$ and $\Z[\Theta_{5-n}]$ which will induce a bijection between corresponding ideal class monoids which is compatible with Gompf equivalence. 	

Theorem~\ref{theorem:iso} is inspired by some evidences which are given in work of Aitchison-Rubinstein \cite{Aitchison-Rubinstein:1984-1} and that of Gompf \cite{Gompf:2010-1}. Aitchison and Rubinstein \cite[page~43]{Aitchison-Rubinstein:1984-1} observed that the discriminant $\Delta(f_n)$ of the polynomial $f_n$ have the following symmetry:
		\[
		\Delta(f_n) = n(n-2)(n-3)(n-5)-23=\Delta(f_{5-n}).
		\]
		On the other hand, Gompf \cite[page 1672]{Gompf:2010-1} computed the cardinality $\# C(\mathcal{O}_n)$ for $r \leq 10^8$ 
		via PARI/GP \cite{PARI} and observed that $\# C(\mathcal{O}_n) = \# C(\mathcal{O}_{5-n})$ where $\mathcal{O}_n$ is the ring of integer of $\Q[\Theta_n]$. 
	
	\begin{theorem}\label{theorem:iso}
		For any integer $n$, let $\Theta_n$ be a root of $f_n(x)=x^3-nx^2+(n-1)x-1$. Then, there is a ring isomorphism 
		$\varphi _n : \ztheta{n} \to \ztheta{5-n}$ defined by
		\[\varphi _n (\Theta_{n}) = \Theta _{5-n}^2+(n-4)\Theta_{5-n}+1.\]
	\end{theorem}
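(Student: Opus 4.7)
The plan is to reduce the claim to two polynomial identities modulo $f_n$ and $f_{5-n}$ respectively, and then verify each by a direct algebraic computation.

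Since $\Z[\Theta_n]\cong \Z[x]/(f_n(x))$, to define a ring homomorphism $\varphi_n\colon \Z[\Theta_n]\to \Z[\Theta_{5-n}]$ sending $\Theta_n$ to $\alpha:=\Theta_{5-n}^2+(n-4)\Theta_{5-n}+1$, it suffices (and is necessary) to check that $\alpha$ is a root of $f_n$ in $\Z[\Theta_{5-n}]$. So the first step is to verify the identity
\[ f_n(\alpha)=\alpha^3-n\alpha^2+(n-1)\alpha-1=0 \quad\text{in } \Z[\Theta_{5-n}].\]
Writing $\theta=\Theta_{5-n}$ and using $\theta^3=(5-n)\theta^2-(4-n)\theta+1$ to reduce every occurrence of $\theta^k$ with $k\geq 3$, I would expand $\alpha^2$ and $\alpha^3$ as polynomials in $\theta$ of degree $\leq 2$, then form $f_n(\alpha)$ and check that all coefficients of $1,\theta,\theta^2$ vanish. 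The symmetry of the discriminant $\Delta(f_n)=\Delta(f_{5-n})$ noted by Aitchison--Rubinstein already makes this plausible; the explicit map was presumably found by asking which integer polynomial in $\theta$ of degree $\leq 2$ is a root of $f_n$.

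Next, to see $\varphi_n$ is a ring \emph{isomorphism}, I would symmetrize: consider the analogous map
\[ \varphi_{5-n}\colon \Z[\Theta_{5-n}]\to \Z[\Theta_n],\qquad \Theta_{5-n}\mapsto \Theta_n^2+(1-n)\Theta_n+1,\]
which is a ring homomorphism by the same calculation with $n$ replaced by $5-n$. Then $\varphi_{5-n}\circ \varphi_n$ is a ring endomorphism of $\Z[\Theta_n]$, determined by its value on $\Theta_n$, so it suffices to verify
\[ \bigl(\Theta_n^2+(1-n)\Theta_n+1\bigr)^2+(n-4)\bigl(\Theta_n^2+(1-n)\Theta_n+1\bigr)+1=\Theta_n \quad\text{in } \Z[\Theta_n],\]
which is again a finite polynomial computation modulo $f_n(\Theta_n)=0$. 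The symmetric identity shows $\varphi_n\circ \varphi_{5-n}=\id$ as well, so $\varphi_n$ is a two-sided inverse of $\varphi_{5-n}$.

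The only real obstacle is the bookkeeping in the first identity: expanding $\alpha^3$ involves terms up to $\theta^6$, which must each be rewritten using the cubic relation for $\theta$. This is entirely mechanical but error-prone, so in practice I would organize the reduction in a small table (computing $\theta^3,\theta^4,\theta^5,\theta^6$ in the basis $\{1,\theta,\theta^2\}$ once and for all) and then collect coefficients. Once that identity is in hand, the inverse-direction identity follows from the same computation under $n\leftrightarrow 5-n$, and the theorem is proved.
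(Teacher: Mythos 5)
Your proposal follows essentially the same route as the paper: verify that the proposed image of the generator is a root of the appropriate cubic (so the map is well defined on $\Z[\Theta_n]\cong\Z[x]/(f_n)$), then check that the two maps compose to the identity on the generators, with the $n\leftrightarrow 5-n$ symmetry handling the other composite. The only difference is in execution: where you propose brute-force reduction of $\theta^3,\dots,\theta^6$ to the basis $\{1,\theta,\theta^2\}$, the paper sidesteps the expansion by exploiting $\Theta_n(\Theta_n-1)(\Theta_n-n+1)=1$, which gives $(\Theta_n-1)\alpha_n=\Theta_n$ and collapses both verifications to a few lines.
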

	
	\begin{proof}For an aesthetic reason, we prove an equivalent statement that the ring homomorphism $\varphi_{5-n}\colon \Z[\Theta_{5-n}]\to \Z[\Theta_n]$ is an isomorphism for any integer $n$. The ring isomorphism $\varphi_{5-n}$ will be defined as $\varphi_{5-n}(\Theta_{5-n})=\Theta_n^2+(1-n)\Theta_n+1$. Let $\overline{\varphi}_{5-n}\colon \Z[x]\to \Z[\Theta_{n}]$ be a ring homomorphism which sends $x$ to $\Theta_{n}^2+(1-n)\Theta_{n}+1$. We prove that $\overline{\varphi}_{5-n}$ induces the ring homomorphism $\varphi_{5-n}\colon \Z[\Theta_{5-n}]\to \Z[\Theta_n]$ by observing that 
	\[\overline{\varphi}_{5-n}(f_{5-n}(x))=f_{5-n}(\Theta_n^2+(1-n)\Theta_n+1)=0\]
	 where $f_{5-n}(x)=x^3-(5-n)x^2+(4-n)x-1$. By setting $\alpha_n=\varphi_{5-n}(\Theta_{5-n})=\Theta_n^2-(n-1)\Theta_n+1$, we show $f_{5-n}(\alpha_n)=0$. Recall that $\Theta_n$ is a root of $f_n(x)=x^3-nx^2+(n-1)x-1=0$. We have	\[\Theta_n(\Theta_n-1)(\Theta_n-n+1)=\Theta_n^3-n\Theta_n^2+(n-1)\Theta_n=1.\] 
	The following equality will be useful.
	\begin{align*}
	(\Theta_n-1)\alpha_n&=(\Theta_n-1)(\Theta_n^2-(n-1)\Theta_n+1)\\
	&=\Theta_n(\Theta_n-1)(\Theta_n-n+1)+\Theta_n-1=\Theta_n.
	\end{align*}
	Since $\Theta_n-1\neq 0$, the following shows that $f_{5-n}(\alpha_n)=0$:
	\begin{align*}(\Theta_n-1)^3f_{5-n}(\alpha_n)&=(\Theta_n-1)^3(\alpha_n^3-(5-n)\alpha_n^2+(4-n)\alpha_n-1)\\
	&=\Theta_n^3-(5-n)\Theta_n^2(\Theta_n-1)+(4-n)\Theta_n(\Theta_n-1)^2-(\Theta_n-1)^3\\
	&=\Theta_n^3-(\Theta_n-1)^3-\Theta_n(\Theta_n-1)((5-n)\Theta_n-(4-n)(\Theta_n-1))\\
	&=3\Theta_n(\Theta_n-1)+1-\Theta_n(\Theta_n-1)(\Theta_n-n+4)\\
	&=1-\Theta_n(\Theta_n-1)(\Theta_n-(n-1))=0.
	\end{align*}

	Therefore, we have a ring homomorphism $\varphi_{5-n}\colon \Z[\Theta_{5-n}]\to \Z[\Theta_n]$ such that $\varphi_{5-n}(\Theta_{5-n})=\Theta_n^2+(1-n)\Theta_n+1$. Now we prove that $\varphi_{5-n}\circ \varphi_{n}$ is the identity on $\Z[\Theta_n]$ by showing that $\varphi_{5-n}\circ \varphi_{n}(\Theta_n)=\Theta_n$. To simplify the proof, we give two elementary observations. Since $f_{5-n}(\alpha_n)=0$, $\alpha_n(\alpha_n-1)(\alpha_n+n-4)=1$. Note that $(\Theta_n-1)(\alpha_n-1)=(\Theta_n-1)\alpha_n-\Theta_n+1=1$. 
	\begin{align*}\varphi_{5-n}\circ \varphi_{n}(\Theta_n)&=\varphi_{5-n}(\Theta_{5-n}^2+(n-4)\Theta_{5-n}+1)\\
	&=\alpha_n^2+(n-4)\alpha_n+1\\
	&=(\Theta_n-1)(\alpha_n-1)\big(\alpha_n^2+(n-4)\alpha_n+1\big)\\
	&=(\Theta_n-1)\big((\alpha_n-1)\alpha_n(\alpha_n+n-4)+\alpha_n-1\big)\\
	&=(\Theta_n-1)(1+\alpha_n-1)\\
	&=(\Theta_n-1)\alpha_n\\
	&=\Theta_n.
	\end{align*}
	By substituting $n$ by $5-n$, $\varphi_{5-n}\circ \varphi_n$ is also the identity. Hence, $\varphi_{5-n}$ is a ring isomorphism and this completes the proof.
	\end{proof}
	
	\begin{remark0}By tensoring $\Q$ to the ring isomorphism $\varphi_n\colon \Z[\Theta_{n}]\to \Z[\Theta_{5-n}]$ given in Theorem~\ref{theorem:iso}, we obtain a field isomorphism from $\Q[\Theta_{n}]$ to $\Q[\Theta_{5-n}]$. From this field isomorphism, we can see that their ring of integers $\mathcal{O}_n$ and $\mathcal{O}_{5-n}$ are also isomorphic and $\Delta(f_n)=\Delta(f_{5-n})$ for any integer $n$.
		\end{remark0}
	\begin{corollary}\label{corollary:symmetry}There exists a monoid isomorphism $\psi_n\colon C(\Z[\Theta_{n}])\to C(\Z[\Theta_{5-n}])$ for any integer $n$. Furthermore, for any integers $c$, $d$ with $f_n(c)\equiv 0$ \textup{(}mod $d$\textup{)}, $\psi_n$ sends $[\langle \Theta_{n}-c,d\rangle ]$ to $[\langle \Theta_{5-n}-p_{n}(c),d\rangle]$ where $p_{n}(x)=x^2+(1-n)x+1$.
	\end{corollary}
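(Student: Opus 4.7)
The plan is to define $\psi_n$ to be the map induced on ideal class monoids by the ring isomorphism $\varphi_n$ of Theorem~\ref{theorem:iso}, and then to verify the explicit formula. Any ring isomorphism $\varphi\colon R\to S$ between integral domains carries ideals to ideals bijectively, preserves products, and respects the equivalence $\alpha I=\gamma J$ (by sending both sides to $\varphi(\alpha)\varphi(I)=\varphi(\gamma)\varphi(J)$ with $\varphi(\alpha),\varphi(\gamma)$ nonzero), so the assignment $[I]\mapsto [\varphi(I)]$ descends to a well-defined monoid homomorphism $C(R)\to C(S)$. Applying this to $\varphi_n$ and to its inverse $\varphi_{5-n}$ (mutually inverse by the proof of Theorem~\ref{theorem:iso}) produces the desired monoid isomorphism $\psi_n\colon C(\Z[\Theta_n])\to C(\Z[\Theta_{5-n}])$.

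It then suffices to establish the ideal equality
\[ \varphi_n(\langle \Theta_n-c,\, d\rangle) \;=\; \langle \Theta_{5-n}-p_n(c),\, d\rangle \]
in $\Z[\Theta_{5-n}]$. Writing $\beta=\Theta_{5-n}$, Theorem~\ref{theorem:iso} gives $\varphi_n(\Theta_n)=\beta^2+(n-4)\beta+1=p_{5-n}(\beta)$, so the left-hand side equals $\langle p_{5-n}(\beta)-c,\, d\rangle$. For the inclusion $\langle p_{5-n}(\beta)-c,d\rangle\subseteq \langle \beta-p_n(c),d\rangle$, I would use the identity $p_{5-n}(p_n(\Theta_n))=\Theta_n$ that emerged in the proof of Theorem~\ref{theorem:iso} (from $\varphi_{5-n}\circ\varphi_n=\mathrm{id}$) to conclude that $f_n(x)$ divides $p_{5-n}(p_n(x))-x$ in $\Z[x]$. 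Dividing $p_{5-n}(x)-c$ by the monic polynomial $x-p_n(c)$ in $\Z[x]$ then writes $p_{5-n}(\beta)-c=(\beta-p_n(c))q(\beta)+r$ with $r=p_{5-n}(p_n(c))-c$, and this remainder is a multiple of $f_n(c)$, hence of $d$.

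The main obstacle is the reverse inclusion. I would work modulo $\langle p_{5-n}(\beta)-c,\, d\rangle$ and exploit the relation $f_{5-n}(\beta)=0$. Polynomial division yields the clean identity
\[ f_{5-n}(x)=(x-1)\bigl(p_{5-n}(x)-c\bigr)+(c-1)x-c, \]
so $(c-1)\beta-c$ lies in $\langle p_{5-n}(\beta)-c\rangle$. Multiplying out, $(c-1)\bigl(\beta-p_n(c)\bigr)$ reduces to $-f_n(c)$ modulo $\langle p_{5-n}(\beta)-c\rangle$, which is $\equiv 0$ modulo $d$. The heart of the argument is then to cancel $c-1$, and the crucial arithmetic observation is that $\gcd(c-1,d)=1$: any common divisor $g$ would force $f_n(c)\equiv f_n(1)=-1\pmod g$, contradicting $g\mid d\mid f_n(c)$. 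Bezout then promotes the congruence $(c-1)(\beta-p_n(c))\equiv 0$ to $\beta-p_n(c)\in \langle p_{5-n}(\beta)-c,\, d\rangle$, completing the equality of ideals and hence the claimed formula $\psi_n([\langle \Theta_n-c,d\rangle])=[\langle \Theta_{5-n}-p_n(c),d\rangle]$.
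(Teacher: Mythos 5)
Your proposal is correct, and while it shares the paper's overall strategy (define $\psi_n([I])=[\varphi_n(I)]$ and then verify the ideal identity by hand), the verification itself runs along a genuinely different track. The paper stays inside $\Z[\Theta_n]$: it proves $\langle \Theta_n-c,d\rangle=\langle p_n(\Theta_n)-p_n(c),d\rangle$, where the easy inclusion is the factorization $p_n(\Theta_n)-p_n(c)=(\Theta_n-c)(\Theta_n+c-n+1)$ and the hard inclusion is settled by the single explicit identity $\Theta_n-c=(\Theta_n-1)(c-1)\bigl(p_n(c)-p_n(\Theta_n)\bigr)-(\Theta_n-1)f_n(c)$, after which it applies $\varphi_n$ using $p_n(\Theta_n)=\varphi_{5-n}(\Theta_{5-n})$. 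You instead push forward first and compare $\langle p_{5-n}(\Theta_{5-n})-c,d\rangle$ with $\langle \Theta_{5-n}-p_n(c),d\rangle$ inside $\Z[\Theta_{5-n}]$; your two inclusions check out (the division identity $f_{5-n}(x)=(x-1)(p_{5-n}(x)-c)+(c-1)x-c$ is correct, $(c-1)p_n(c)=f_n(c)+c$ gives the reduction to $-f_n(c)$, and $p_{5-n}(p_n(x))-x$ is indeed divisible by $f_n(x)$ since $f_n$ is the minimal polynomial of $\Theta_n$). The trade-off: the paper's argument is a self-contained algebraic identity needing no arithmetic input, whereas yours replaces that identity with a softer Bezout cancellation hinging on the (correct and pleasantly robust) observation that $\gcd(c-1,d)=1$ because $d\mid f_n(c)$ and $f_n(1)=-1$; this gcd trick is more modular and would adapt to other situations where an explicit closed-form identity is not readily available, at the cost of one extra number-theoretic step.
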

	\begin{proof}Define a monoid homomorphism $\psi_n\colon C(\Z[\Theta_{n}])\to C(\Z[\Theta_{5-n}])$ by $[I]\mapsto [\varphi_n(I)]$ where $\varphi_n$ is the ring homomorphism given in Theorem~\ref{theorem:iso}. Since $\varphi_{n}$ is a ring isomorphism for any integer $n$ by Theorem~\ref{theorem:iso}, $\psi_n$ is also a monoid isomorphism for any integer $n$. To give an explicit formula of $\psi_n$, we prove that $\varphi_n(\langle \Theta_n-c,d\rangle)=\langle  \Theta_{5-n}-p_n(c),d\rangle$ and this clearly implies the desired statement.
	
\begin{claim}$\langle \Theta_{n}-c,d\rangle=\langle p_{n}(\Theta_n)-p_{n}(c),d\rangle$. 
\end{claim}	
\begin{proof}[Proof of Claim]The following calculation shows that $\langle  p_{n}(\Theta_n)-p_{n}(c),d\rangle\subset \langle \Theta_{n}-c,d\rangle$:
	\begin{align*}p_{n}(\Theta_{n})-p_{n}(c)&=\Theta_{n}^2+(1-n)\Theta_{n}+1-c^2-(1-n)c-1\\&=(\Theta_{n}-c)(\Theta_{n}+c-n+1)\in \langle \Theta_{n}-c,d\rangle.\end{align*}
Now we prove $ \langle \Theta_{n}-c,d\rangle\subset \langle  p_{n}(\Theta_n)-p_{n}(c),d\rangle$. Note that 
\[f_{n}(x)+1=x^3-nx^2+(n-1)x=(x-1)(x^2+(1-n)x)=(x-1)(p_{n}(x)-1).\]
Using the above equation on $f_n(x)+1$ and the fact that $f_n(\Theta_n)=0$, we observe that
		\begin{align*}
		\Theta_{n} -c &= \Theta_{n}-1-(c-1)\\
		&=(\Theta_{n}-1)(f_{n}(c)+1)-(\Theta_{n}-1)f_{n}(c)-(c-1)(f_{n}(\Theta_{n})+1)\\
		&=(\Theta_n-1)(c-1)(p_n(c)-1)-(\Theta_n-1)f_n(c)-(\Theta_n-1)(c-1)(p_n(\Theta_n)-1)\\
			&=(\Theta_{n}-1)(c-1)(p_{n}(c)-p_{n}(\Theta_{n}))-(\Theta_{n}-1)f_n(c).
		\end{align*}
From the assumption $f_n(c)\equiv 0$ (mod $d$), $f_n(c) \in \langle p_{n}(\Theta_n)-p_{n}(c),d\rangle$.  This shows that $\langle  \Theta_n-c,d\rangle\subset \langle p_{n}(\Theta_n)-p_{n}(c),d\rangle$ and hence the claim follows.
\end{proof}
	 Note that $p_{n}(\Theta_{n})=\Theta_{n}^2+(1-n)\Theta_{n}+1=\varphi_{5-n}(\Theta_{n})$. By the claim,
\[\varphi_n(\langle \Theta_{n}-c,d\rangle)=\varphi_n(\langle \varphi_{5-n}(\Theta_{n})-p_{n}(c),d\rangle)=\langle \Theta_{n}-p_{n}(c),d\rangle.\]
Here the last equality follows from the fact that  $\varphi_n\circ \varphi_{5-n}$ is the identity. This completes the proof.
	\end{proof}

	\begin{theorem}\label{sym1}
		There is a bijection between the set of similarity classes of 
		Cappell-Shaneson matrices with trace $n$ and the set of similarity classes of \CS matrices with trace $5-n$, 
		which is explicitly defined by
		\[A=
		\begin{bmatrix}
		0 & a & b \\
		0 & c & d \\
		1 & 0 & n-c \\
		\end{bmatrix}		\mapsto  A^*=
		\begin{bmatrix}
		0 & a^*& b^* \\
		0 & c^* & d^* \\
		1 & 0 & 5-n-c^* \\
		\end{bmatrix}
		\]
		where $c^*=p_n(c)=c^2+(1-n)c+1$, $d^*=d$. In particular, $X_{c,d,n}^*=X_{p_n(c),d,5-n}^{\vphantom{*}}$.
	\end{theorem}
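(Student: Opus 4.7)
The plan is to assemble the theorem as a direct consequence of the Latimer–MacDuffee–Taussky correspondence (Proposition~\ref{proposition:Aitchison-Rubinstein}) and the monoid isomorphism $\psi_n$ supplied by Corollary~\ref{corollary:symmetry}. The substantive work has already been done there, so what remains is a well-definedness check on the explicit formula followed by a short diagram chase.

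First I would verify that $X_{p_n(c),d,5-n}$ is actually a standard \CS matrix. By Proposition~\ref{csm2}, this reduces to showing $f_{5-n}(p_n(c)) \equiv 0 \pmod d$. Recall from the proof of Theorem~\ref{theorem:iso} that the element $\alpha_n = p_n(\Theta_n) = \Theta_n^2 + (1-n)\Theta_n + 1$ satisfies $f_{5-n}(\alpha_n) = 0$ in $\Z[\Theta_n]$. Because $f_n(x)$ is the monic minimal polynomial of $\Theta_n$ over $\Q$, and $f_{5-n}(p_n(x)) \in \Z[x]$ vanishes at $\Theta_n$, polynomial division over $\Z$ yields an identity
\[
f_{5-n}(p_n(x)) = f_n(x)\, q(x) \quad \text{for some } q(x) \in \Z[x].
\]
Evaluating at $x=c$ and using the hypothesis $f_n(c) \equiv 0 \pmod d$ gives $f_{5-n}(p_n(c)) \equiv 0 \pmod d$, as required. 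Proposition~\ref{csm2} then produces $X_{p_n(c),d,5-n}$, with the remaining entries $a^*, b^*$ determined uniquely by the triple $(p_n(c), d, 5-n)$.

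Next I would assemble the bijection. Proposition~\ref{proposition:Aitchison-Rubinstein} gives, for each integer $m$, a bijection between similarity classes of trace-$m$ \CS matrices and $C(\Z[\Theta_m])$, explicitly sending $[X_{c,d,m}]$ to $[\langle \Theta_m - c, d\rangle]$. Corollary~\ref{corollary:symmetry} provides a monoid isomorphism
\[
\psi_n \colon C(\Z[\Theta_n]) \longrightarrow C(\Z[\Theta_{5-n}]), \qquad [\langle \Theta_n - c, d\rangle] \longmapsto [\langle \Theta_{5-n} - p_n(c), d\rangle].
\]
Composing the trace-$n$ correspondence with $\psi_n$ and then with the inverse of the trace-$(5-n)$ correspondence yields a bijection between the two sets of similarity classes, and the explicit descriptions of all three maps show that $[X_{c,d,n}]$ is sent to $[X_{p_n(c), d, 5-n}]$. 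This is exactly the content of the theorem.

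There is not really a main obstacle here; the only subtle point is the arithmetic verification in the first step, and even that is immediate once one observes that $f_n(x) \mid f_{5-n}(p_n(x))$ in $\Z[x]$. Everything else is a formal consequence of results already established in the preceding sections.
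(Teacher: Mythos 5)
Your proposal is correct and follows essentially the same route as the paper: compose the Latimer--MacDuffee--Taussky correspondence of Proposition~\ref{proposition:Aitchison-Rubinstein} for traces $n$ and $5-n$ with the monoid isomorphism $\psi_n$ of Corollary~\ref{corollary:symmetry}, and read off the explicit formula on ideal classes. Your extra well-definedness check that $f_n(x)\mid f_{5-n}(p_n(x))$ in $\Z[x]$, hence $f_{5-n}(p_n(c))\equiv 0\Mod{d}$, is a valid and worthwhile addition that the paper leaves implicit (it appears only as the explicit identity $p_{5-n}(p_n(c))=c+f_n(c)(c-n+2)$ in Lemma~\ref{lemma:Gompfdoubledual}).
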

	
	\begin{proof}Since every \CS matrix is similar to a standard \CS matrix, the bijection $A\mapsto A^*$ gives the ones
	which represent all the similarity classes of \CS matrices with trace $5-n$.
	
	Recall that there is a bijection between the similarity classes of Cappell-Shaneson matrices with trace $n$  (respectively, trace $5-n$) with the ideal class monoid $C(\Z[\Theta_n])$ (respectively, $C(\Z[\Theta_{5-n}])$) by Proposition~\ref{proposition:Aitchison-Rubinstein}. On the other hand, we have a monoid isomorphism $\psi_n\colon C(\Z[\Theta_n])\to C(\Z[\Theta_{5-n}])$ by Corollary~\ref{corollary:symmetry}. The composition of these three bijections gives a bijection between the set of similarity classes of Cappell-Shaneson matrices with trace $n$ and the set of similarity classes of Cappell-Shaneson matrices with trace $5-n$. It remains to show is that the aforementioned bijection actually sends a standard Cappell-Shaneson matrix $A$ to a standard Cappell-Shaneson matrix $A^*$. 
	
By Proposition~\ref{proposition:Aitchison-Rubinstein}, the ideal class correspond to the standard Cappell-Shaneson matrix $A$ is $[\langle \Theta_n-c,d\rangle]$. By Corollary~\ref{corollary:symmetry}, $\psi_n$ sends the ideal class $[\langle \Theta_n -c,d\rangle]$ to the ideal class $[\langle  \Theta_{5-n}-p_{n}(c),d\rangle]$, which is the ideal class correspond to the standard Cappell-Shaneson matrix $A^*$ by Proposition~\ref{proposition:Aitchison-Rubinstein}. This completes the proof.
	\end{proof}
	
\begin{example}\label{ten}As an illustration, we explicitly describe the bijection given in Theorem~\ref{sym1} for the case that trace $n=-5$.	 Aitchison and Rubinstein \cite{Aitchison-Rubinstein:1984-1} showed that there are only two similarity classes 
		of \CS matrices with trace $-5$, which are represented by as follows. (Note that $A=A_{-7}$.)
		\[
		A=
		\begin{bmatrix}
		0 & 1 & 0 \\
		0 & 1 & 1 \\
		1 & 0 & -6 \\
		\end{bmatrix},\,
		B=
		\begin{bmatrix}
		0 & -5 & -8 \\
		0 & 2 & 3 \\
		1 & 0 & -7 \\
		\end{bmatrix}.
		\]
		By Theorem~\ref{sym1}, it follows that there are only two similarity classes 
		of \CS matrices with trace $10$, which are represented by 
				\[
		A^*=
		\begin{bmatrix}
		0 & 57 & 7 \\
		0 & 8 & 1 \\
		1 & 0 & 2 \\
		\end{bmatrix},\,
		B^*=
		\begin{bmatrix}
		0 & -725 & -128 \\
		0 & 17 & 3 \\
		1 & 0 & -7 \\
		\end{bmatrix}.
		\]
		By Proposition~\ref{proposition:Aitchison-Rubinstein}, the ideal classes correspond to $A^*$ and $B^*$ are 
		$[\langle 1, \Theta_{10} -8\rangle]$ and $[\langle 3, \Theta_{10}-17\rangle]$, respectively.
		Note that $[\langle 1, \Theta_{10} -8\rangle =[\langle 1, \Theta_{10}-1\rangle]$ and $[\langle 3, \Theta_{10}-17\rangle]=[\langle 3, \Theta_{10}-2\rangle ]$. 
		We obtain similarity relations by Proposition~\ref{proposition:Aitchison-Rubinstein}:
		\[
		A^*\sim 
		\begin{bmatrix}
		0 & 1 & 0 \\
		0 & 1 & 1 \\
		1 & 0 & 9 \\
		\end{bmatrix}, \,
		B^*\sim 
		\begin{bmatrix}
		0 & 5 & 7  \\
		0 & 2 & 3 \\
		1 & 0 & 8 \\
		\end{bmatrix}.
		\]
	\end{example}	
	To complete the proof of Theorem~\ref{theorem:A}, we prove two lemmas which illustrate  that the bijection given in Theorem~\ref{sym1} behaves nicely with Gompf equivalence.
	\begin{lemma}\label{lemma:Gompfdual}Suppose that $A$ and $B$ are two standard \CS matrices such that $A$ and $B$ are Gompf equivalent. Then $A^*$ and $B^*$ are also Gompf equivalent.
	\end{lemma}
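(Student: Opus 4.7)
The plan is to reduce the statement to the two generating relations $\sim_S$ and $\sim_G$ of Gompf equivalence, and verify that each is preserved under the involution $X_{c,d,n}\mapsto X_{c,d,n}^{*}=X_{p_n(c),d,5-n}$.

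First, for $\sim_S$: suppose $X_{c_0,d_0,n}\sim_S X_{c_1,d_1,n}$, so that $[\langle\Theta_n-c_0,d_0\rangle]=[\langle\Theta_n-c_1,d_1\rangle]$ in $C(\Z[\Theta_n])$. Applying the monoid isomorphism $\psi_n\colon C(\Z[\Theta_n])\to C(\Z[\Theta_{5-n}])$ of Corollary~\ref{corollary:symmetry}, which sends $[\langle\Theta_n-c,d\rangle]$ to $[\langle\Theta_{5-n}-p_n(c),d\rangle]$, we immediately get $[\langle\Theta_{5-n}-p_n(c_0),d_0\rangle]=[\langle\Theta_{5-n}-p_n(c_1),d_1\rangle]$, hence $X_{c_0,d_0,n}^{*}\sim_S X_{c_1,d_1,n}^{*}$.

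Second, for $\sim_G$: suppose $X_{c,d,n}\sim_G X_{c,d,n+kd}$ for some $k\in\Z$. Then $X_{c,d,n}^{*}=X_{p_n(c),d,5-n}$ and $X_{c,d,n+kd}^{*}=X_{p_{n+kd}(c),d,5-n-kd}$. By applying $\sim_G$ with parameter $-k$ in trace $5-n$, I get
\[
X_{p_n(c),d,5-n}\ \sim_G\ X_{p_n(c),d,5-n-kd}.
\]
Setting $m=5-n-kd$, it then suffices to show $X_{p_n(c),d,m}\sim_S X_{p_{n+kd}(c),d,m}$, i.e.\ that the ideals $\langle\Theta_m-p_n(c),d\rangle$ and $\langle\Theta_m-p_{n+kd}(c),d\rangle$ determine the same class in $C(\Z[\Theta_m])$. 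But $p_n(c)-p_{n+kd}(c)=kdc$ is divisible by $d$, so these two ideals are literally \emph{equal} in $\Z[\Theta_m]$. Composing the two steps gives $X_{c,d,n}^{*}\sim X_{c,d,n+kd}^{*}$.

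Since $\sim$ is the equivalence relation generated by $\sim_S$ and $\sim_G$, the two observations above show that $*$ descends to a well-defined map on Gompf equivalence classes, which is exactly the content of the lemma. The only genuine content is the congruence $p_n(c)\equiv p_{n+kd}(c)\pmod d$, which is transparent from the formula $p_n(x)=x^2+(1-n)x+1$; there is no real obstacle beyond carefully bookkeeping the trace changes when combining $\sim_S$ on the input side with $\sim_G$ on the output side.
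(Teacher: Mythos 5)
Your proof is correct and follows the same strategy as the paper: reduce to the two generating relations $\sim_S$ and $\sim_G$ and check that each is preserved by the $*$-operation. In fact you are slightly more careful than the paper on the $\sim_G$ case --- the paper simply asserts $X_{c,d,n+kd}^{*}=X_{c^*,d,5-n-kd}$ with $c^*=p_n(c)$, silently using the congruence $p_{n+kd}(c)\equiv p_n(c)\Mod{d}$ (equivalently, an extra application of $\sim_S$) that you verify explicitly via $p_n(c)-p_{n+kd}(c)=kdc$.
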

	\begin{proof} Since Gompf equivalence is generated by $\sim_S$ and $\sim_G$, we can assume without loss of generality that either $A\sim_S B$ or $A\sim_G B$ holds. If $A\sim_S B$, then $A^*\sim_S B^*$ by Theorem~\ref{sym1}. Now we assume that $A\sim_G B$, that is, $A=X_{c,d,n}$ and $B=X_{c,d,n+kd}$ for some $c,d,k$ and $n\in \Z$. By Theorem~\ref{sym1}, $A^*=X_{c^*,d,5-n}$ and $B^*=X_{c^*,d,5-n-kd}$, and hence $A^*\sim_G B^*$. (Note that $d^*=d$.) This completes the proof.
	\end{proof}
	\begin{lemma}\label{lemma:Gompfdoubledual}Let $A$ be a standard \CS matrix. Then $A$ is similar to $(A^*)^*$.
	\end{lemma}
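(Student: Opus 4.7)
The plan is to reduce the claim to the fact that the ring isomorphisms $\varphi_n$ and $\varphi_{5-n}$ from Theorem~\ref{theorem:iso} are mutual inverses. Write $A=X_{c,d,n}$. By Theorem~\ref{sym1} (with $p_n(x)=x^2+(1-n)x+1$), we have
\[
A^*=X_{p_n(c),d,5-n}\qquad\text{and hence}\qquad (A^*)^*=X_{p_{5-n}(p_n(c)),\,d,\,n}.
\]
To show $(A^*)^*$ is similar to $A$, by Proposition~\ref{proposition:Aitchison-Rubinstein} it suffices to check that they correspond to the same ideal class in $C(\Z[\Theta_n])$, that is,
\[
[\langle\Theta_n-p_{5-n}(p_n(c)),\,d\rangle]=[\langle\Theta_n-c,\,d\rangle].
\]

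The conceptual route is to note that the bijection of Theorem~\ref{sym1} factors through $\psi_n\colon C(\Z[\Theta_n])\to C(\Z[\Theta_{5-n}])$, which is induced by $\varphi_n$ (Corollary~\ref{corollary:symmetry}). Applying $*$ twice therefore corresponds on ideal class monoids to the monoid endomorphism induced by $\varphi_{5-n}\circ\varphi_n$, and the proof of Theorem~\ref{theorem:iso} already verifies that $\varphi_{5-n}\circ\varphi_n=\mathrm{id}_{\Z[\Theta_n]}$. Hence $[A]=[(A^*)^*]$ in $C(\Z[\Theta_n])$ and the claim follows.

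Should a more hands-on proof be preferred, one may instead verify directly, by polynomial division in $\Z[x]$, that
\[
p_{5-n}(p_n(x))-x=(x+2-n)\,f_n(x),
\]
where $f_n(x)=x^3-nx^2+(n-1)x-1$. Substituting $x=c$ and using $f_n(c)\equiv0\Mod d$ (from $(c,d,n)\in\mathcal{CS}$) yields $p_{5-n}(p_n(c))\equiv c\Mod d$, so the two generating sets differ by a multiple of $d$ and produce the same ideal (not merely the same ideal class). Again Proposition~\ref{proposition:Aitchison-Rubinstein} concludes.

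There is no real obstacle here; the only thing to be careful about is keeping the direction of Theorem~\ref{sym1} straight when applying it to the trace $5-n$ matrix $A^*$, and verifying that $\psi_{5-n}$ is induced by $\varphi_{5-n}$ in exactly the way that makes $\psi_{5-n}\circ\psi_n$ the identity. Both points are immediate from the construction of $\psi_n$ as $[I]\mapsto[\varphi_n(I)]$.
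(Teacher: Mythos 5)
Your proposal is correct and takes essentially the same approach as the paper: the paper's proof is exactly your ``hands-on'' route, expanding $p_{5-n}(p_n(c))=c+f_n(c)(c-n+2)\equiv c\Mod{d}$ and concluding via Remark~\ref{remark:changenbyc} that $X_{p_{5-n}(p_n(c)),d,n}$ is similar to $X_{c,d,n}$. Your alternative ``conceptual'' route through $\psi_{5-n}\circ\psi_n=\mathrm{id}$ is a valid repackaging of the same facts from Theorem~\ref{theorem:iso} and Corollary~\ref{corollary:symmetry}.
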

	\begin{proof}
	
	Since $A$ is a standard \CS matrix, $A=X_{c,d,n}$ for some $c,d$ and $n$ with $f_n(c)\equiv 0\Mod d$.	Then $(A^*)^*=(X_{p_n(c),d,5-n})^*=X_{p_{5-n}(p_n(c)),d,n}$ where $p_n(c)=c^2+(1-n)c+1$. Note that 
	\begin{align*}p_{5-n}(p_n(c))&=p_n(c)^2+(n-4)p_n(c)+1\\
	&=\big(c^2+(1-n)c+1\big)^2+(n-4)\big(c^2+(1-n)c+1\big)+1\\
	&=c+f_n(c)(c-n+2)\equiv c\Mod{d}
	\end{align*}
	since $f_n(c)\equiv 0\Mod d$. Since $p_{5-n}(p_{n}(c))\equiv c\Mod d$, $(A^*)^*$ is similar to $A$ by Remark~\ref{remark:changenbyc}.
	\end{proof}
	Now we prove Theorem~\ref{theorem:A}.
	\begin{proof}[Proof of Theorem~\ref{theorem:A}] We have already seen that there is a bijection between the set of similarity classes of trace $n$ \CS matrices and the set of similarity classes of trace $5-n$ \CS matrices in Theorem~\ref{sym1}.
	
	Assume that Conjecture~\ref{conjecture:Gompf} is true for trace $n$ for some integer $n$. Let $X$ be a standard \CS matrix with trace $5-n$. (Recall that every \CS matrix is similar to a standard \CS matrix.) Then $X^*$ given in Theorem~\ref{sym1} is a standard \CS matrix with trace $n$. Since we are assuming that Conjecture~\ref{conjecture:Gompf} is true for trace $n$, $X^*_{\vphantom{0}}$ is Gompf equivalent to $A_0$. By Lemma~\ref{lemma:Gompfdoubledual}, $X$ is similar to $(X^*)^*$. By Lemma~\ref{lemma:Gompfdual}, $(X^*)^*$ is Gompf equivalent to $(A_0)^*$. As in Example~\ref{ten}, $A_0^*$ is similar to $A_1$, which is Gompf equivalent to $A_0$ by Remark~\ref{remark:identity}.  Therefore $X$ is Gompf equivalent to $A_0$. This shows that Conjecture~\ref{conjecture:Gompf} is true for trace $5-n$ if Conjecture~\ref{conjecture:Gompf} is true for trace $n$. This completes the proof.
	\end{proof}

\section{Ideal class monoid $C(\Z[\Theta_n])$}\label{section:idealclassmonoid}

	In this section, we use several techniques from algebraic number theory. We will recall Dedekind-Kummer theorem, and show $C(\Z[\Theta_{49k+27}])$ is not a group for any integer $k$. We will also determine the structure of the ideal class monoid $C(\Z[\Theta_{27}])$. We first collect some definitions following  \cite{Stevenhagen:2008-1}. 
	
	\begin{definition}[Number rings and orders]A \emph{number field} $K$ is a finite degree field extension of the field $\Q$ of rational numbers. A \emph{number ring} is an integral domain $R$ for which the field of fractions $K$ is a number field. For a number field $K$ with degree $n$, a subring $R$ of the number field $K$ is called an \emph{order} if $R$ is a free $\Z$-module of rank $n$.
	\end{definition}
	
	\begin{example}[{$\Z[\Theta_n]$ is an order}] Let $\alpha$ be a root of some monic, irreducible polynomial $f\in \Z[x]$ of degree $n$. Then $\Q[\alpha]$ is a number field of degree $n$. The ring $\Z[\alpha]$  obtained by adjoining to $\Z$ has a free $\Z$-basis $1,\alpha,\ldots,\alpha^{n-1}$ and hence $\Z[\alpha]$ is an order in the number field $\Q[\alpha]$. We are principally interested in the orders of the form $\Z[\Theta_n]$ where $\Theta_n$ is a root of the monic, irreducible polynomial $f_n(x)=x^3-nx^2+(n-1)x-1$. 
    \end{example}

	\begin{definition}[Ring of integers] Let $K$ be a number field. An element $x$ in $K$ is \emph{an integral element} if $x$ is a root of monic, irreducible polynomial with integer coefficients. The set of integral elements in $K$ is called \emph{the ring of integer} of $K$ and denoted by $\mathcal{O}_K$.
	\end{definition}
	We recall elementary facts on orders discussed in \cite{Stevenhagen:2008-1}.
	\begin{theorem}[{\cite[Sections 6--7]{Stevenhagen:2008-1}}]\label{theorem:Dedekind} A number ring $R\subset K$ is an order in $K$ if and only if $R$ is of finite index in $\mathcal{O}_K$. In particular, $\mathcal{O}_K$ is the maximal order in $K$. For an order $R\subset K$, the following conditions are equivalent.
	\begin{enumerate}
	\item $R$ is integrally closed.
	\item  $R$ is the maximal order $\mathcal{O}_K$. 
	
	\item $R$ is a Dedekind domain.
	\item Every ideal of $R$ is invertible.
	\item $C(R)$ is a group.
	\end{enumerate}
	\end{theorem}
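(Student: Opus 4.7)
The plan is to treat the two assertions separately. For the first, characterizing orders as subrings of finite index in $\mathcal{O}_K$, I would first show that any order $R$ lies inside $\mathcal{O}_K$. Since $R$ is a free $\Z$-module of rank $n=[K:\Q]$, each $\alpha\in R$ satisfies a monic polynomial over $\Z$ (because the multiplication-by-$\alpha$ map on a chosen $\Z$-basis has a characteristic polynomial annihilating $\alpha$ by Cayley--Hamilton, and this polynomial is monic integral). Hence every element of $R$ is integral, so $R\subseteq \mathcal{O}_K$. Since both $R$ and $\mathcal{O}_K$ are free $\Z$-modules of rank $n$ (the latter being a standard fact for number fields), $[\mathcal{O}_K:R]$ is finite. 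Conversely, a subgroup of finite index in a free abelian group of rank $n$ is itself free of rank $n$, so a subring of $K$ of finite index in $\mathcal{O}_K$ is an order. In particular, any order sits inside $\mathcal{O}_K$, which therefore is the maximal order.

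For the five-way equivalence, I would close the cycle (2)$\Leftrightarrow$(1), (2)$\Rightarrow$(3)$\Rightarrow$(4)$\Rightarrow$(5)$\Rightarrow$(4)$\Rightarrow$(1). The implication (2)$\Rightarrow$(1) is just that $\mathcal{O}_K$ consists of all integral elements, hence is its own integral closure in $K$. For (1)$\Rightarrow$(2), any $\alpha\in\mathcal{O}_K$ is integral over $\Z$ and a fortiori over $R$; if $R$ is integrally closed in its field of fractions $K$, then $\alpha\in R$, so $R=\mathcal{O}_K$. The step (2)$\Rightarrow$(3) is the classical theorem that the ring of integers of a number field is a Dedekind domain (it is Noetherian, one-dimensional, and integrally closed). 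Then (3)$\Rightarrow$(4) is the defining property of Dedekind domains: every nonzero (fractional) ideal is invertible. The step (4)$\Rightarrow$(5) is formal: if $I$ is any ideal, invertibility gives $J$ with $IJ=(\alpha)$ principal, so $[I]\cdot[J]=[R]$ in $C(R)$, and the identity $[R]$ is the neutral element.

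The only slightly delicate step is closing the cycle from (5) back to the other conditions. Given (5), for any ideal $I$ pick $[J]$ with $[IJ]=[R]$, i.e.\ $\alpha IJ=\beta R$ for nonzero $\alpha,\beta$; then $I\cdot(\alpha\beta^{-1}J)=R$ inside $K$, showing $I$ is an invertible fractional ideal. This gives (5)$\Rightarrow$(4). For (4)$\Rightarrow$(1), I would use that in a Noetherian domain in which every nonzero ideal is invertible, the ring is integrally closed in its field of fractions: if $\xi\in K$ is integral over $R$, then $R[\xi]$ is a finitely generated $R$-module, hence contained in some $\gamma^{-1}R$, so $\gamma R[\xi]$ is an ideal of $R$ stable under multiplication by $\xi$; using invertibility of this ideal yields $\xi\in R$. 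Combined with the first assertion, an integrally closed order is exactly $\mathcal{O}_K$, which recovers (2).

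The main obstacle is this last step (5)$\Rightarrow$(4)$\Rightarrow$(1): all the other implications are essentially dictionary translations, whereas here one must actually exploit the ideal-theoretic structure of $R$. Everything else amounts to invoking the standard module-theoretic description of $\mathcal{O}_K$ and the Dedekind property; since these are exactly what \cite{Stevenhagen:2008-1} develops in its Sections~6--7, in the paper itself it suffices to cite the reference rather than reproduce the argument.
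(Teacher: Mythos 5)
Your outline is correct. The paper itself gives no proof of this statement---it is quoted as a black box from Sections 6--7 of the cited reference \cite{Stevenhagen:2008-1}---so there is nothing internal to compare against; your argument (Cayley--Hamilton to place an order inside $\mathcal{O}_K$, comparison of $\Z$-ranks for finiteness of the index, and the cycle of implications in which the only substantive step is that invertibility of the ideal $\gamma R[\xi]$ forces an integral element $\xi$ into $R$) is precisely the standard one developed in that reference, and each step as you state it goes through.
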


	\subsection{Dedekind-Kummer theorem}\label{subsection:Dedekind-Kummer}
	As in Section~\ref{subsection:Latimer-MacDuffee-Taussky}, for two ideals $I$ and $J$ in $\Z[\Theta_n]$, we say $I$ and $J$ are \emph{equivalent} (and denoted by $I\approx J$) if $\alpha I=\beta J$ for some non-zero $\alpha, \beta\in \Z[\Theta_n]$. By the definition of $C(\Z[\Theta_n])$, $I\approx J$ if and only if  $[I]=[J]\in C(\Z[\Theta_n])$. By Proposition~\ref{proposition:Aitchison-Rubinstein}, every ideal of $\Z[\Theta_n]$ is equivalent to $\langle \Theta_n-c,d\rangle$ for some $c,d\in \Z$ such that $f_n(c)\equiv 0\Mod d$. By Proposition~\ref{proposition:49k+27}, we know that there are infinitely many $n$ such that $C(\Z[\Theta_n])$ is not a group. For those $n$, there is a non-invertible ideal $\langle \Theta_n-c,d\rangle$ of $\Z[\Theta_n]$. Therefore we want to determine when the ideal $\langle \Theta_n-c,d\rangle$ such that $f_n(c)\equiv 0 \Mod{d}$ is invertible. For this purpose, we can assume that $d$ is prime power by the following remark.
	\begin{remark}\label{remark:decomposition}
	Suppose that  $p$ and $q$ are relatively prime integers. Then $\Theta_n-c$ is a linear combination of $p(\Theta_n-c)$ and $q(\Theta_n-c)$. It follows that
	\[\langle \Theta_n-c,p\rangle \langle \Theta_n-c,q\rangle=\langle (\Theta_n-c)^2, p(\Theta_n-c),q(\Theta_n-c),pq\rangle =\langle \Theta_n-c,pq\rangle.\]
	More generally, consider the prime factorization $d=p_1^{e_1}\cdots p_m^{e_m}$. Then 
	 \[\langle \Theta_n-c,d\rangle =\langle \Theta_n-c,p_1^{e_1}\rangle\langle \Theta_n-c,p_2^{e_2}\rangle \cdots \langle \Theta_n-c,p_m^{e_m}\rangle.\]
Note that $f_n(c)\equiv 0 \Mod {p_i^{e_i}}$ since $f_n(c)\equiv 0 \Mod d$. 	
\end{remark}	
	
Following \cite[Theorem 8.2]{Stevenhagen:2008-1}, we recall Dedekind-Kummer theorem, which can be used to determine when the ideal of the form $\langle \Theta_n-c,p\rangle$ with $p$ is prime and $f_n(c)\equiv 0\Mod{p}$ is invertible.	
	\begin{theorem}[Dedekind-Kummer {\cite[Theorem 8.2]{Stevenhagen:2008-1}}]\label{theorem:Dedekind-Kummer}Let $p$ be a prime integer and $\alpha$ be a root of a monic, irreducible polynomial $f(x)\in \Z[x]$. Let $\overline{f}\in \Z_p[x]$ be a polynomial such that $\overline{f}\equiv f\Mod{p}$. Let the factorization of $\overline{f}$ in $\Z_p[x]$ be $\prod_{i=1}^l\overline{g}_i^{e_i}$. Let $g_i\in \Z[x]$ be a polynomial such that $g_i\equiv \overline{g}_i\Mod{p}$. If $r_i\in \Z[x]$ is the remainder of $f$ upon division by $g_i$ in $\Z[x]$, that is, $f=g_iq_i+r_i$, then the ideal $ \mathfrak{p}_i=\langle p, g_i(\alpha)\rangle \subset \Z[\alpha]$ is prime and $\mathfrak{p}_i$ is invertible if and only if at least one of the following conditions holds.
	\begin{enumerate}
	\item $e_i=1$.
	\item $p^2$ does not divide $r_i\in \Z[x]$.
	\end{enumerate}
	\end{theorem}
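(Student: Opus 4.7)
The plan is to argue primality directly and then characterize invertibility via the local ring at $\mathfrak{p}_i$. For primality, I would identify
\[
\Z[\alpha]/\mathfrak{p}_i \;\cong\; \Z[x]/\langle f,p,g_i\rangle \;\cong\; \Z_p[x]/\langle \overline{f},\overline{g}_i\rangle \;\cong\; \Z_p[x]/\langle \overline{g}_i\rangle,
\]
where the last step uses $\overline{g}_i\mid\overline{f}$ in $\Z_p[x]$. Since $\overline{g}_i$ is irreducible over $\Z_p$, this quotient is a field, so $\mathfrak{p}_i$ is maximal, hence prime.

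Next I would extract the key relation. Since $\overline{g}_i\mid\overline{f}$ in $\Z_p[x]$ and $\deg r_i<\deg g_i$, reducing $f=g_iq_i+r_i$ modulo $p$ forces $\overline{r}_i=0$; thus $p\mid r_i$ in $\Z[x]$. Writing $r_i=p\,s_i$ and evaluating at $\alpha$ gives the identity
\[
g_i(\alpha)\,q_i(\alpha)\;=\;-\,p\,s_i(\alpha) \qquad \text{in }\Z[\alpha].
\]
Because $\deg s_i<\deg g_i$, the condition $p^2\nmid r_i$ is equivalent to $\overline{s}_i\neq 0$, equivalently $\overline{g}_i\nmid\overline{s}_i$, equivalently $s_i(\alpha)\notin\mathfrak{p}_i$. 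Writing $\overline{f}=\overline{g}_i^{\,e_i}\overline{h}$ with $\gcd(\overline{g}_i,\overline{h})=1$ yields $\overline{q}_i=\overline{g}_i^{\,e_i-1}\overline{h}$, so $q_i(\alpha)\notin\mathfrak{p}_i$ is equivalent to $e_i=1$.

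Now pass to the localization $R:=(\Z[\alpha])_{\mathfrak{p}_i}$ and recall that $\mathfrak{p}_i$ is invertible if and only if $\mathfrak{m}:=\mathfrak{p}_iR$ is principal. If $e_i=1$, then $q_i(\alpha)$ is a unit in $R$, and the key identity shows $g_i(\alpha)\in pR$, so $\mathfrak{m}=pR$. If $p^2\nmid r_i$, then $s_i(\alpha)$ is a unit in $R$, and the identity shows $p\in g_i(\alpha)R$, so $\mathfrak{m}=g_i(\alpha)R$. In either case $\mathfrak{m}$ is principal and $\mathfrak{p}_i$ is invertible.

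The main obstacle is the converse in the remaining case $e_i\ge 2$ and $p^2\mid r_i$. Suppose for contradiction that $R$ is a DVR with uniformizer $\pi$. By the Chinese Remainder Theorem, $\Z[\alpha]/p\Z[\alpha]\cong \Z_p[x]/(\overline{g}_i^{\,e_i})\times \Z_p[x]/(\overline{h})$; localizing at $\mathfrak{p}_i$ kills the second factor and yields $R/pR\cong \Z_p[x]/(\overline{g}_i^{\,e_i})$, an Artinian local ring of length $e_i$, which forces $v_\pi(p)=e_i$. Applying the same length calculation to the images of $g_i(\alpha)R$ and $q_i(\alpha)R$ in $R/pR$ (of lengths $e_i-1$ and $1$ respectively, using that $\overline{h}$ is a unit modulo $\overline{g}_i^{\,e_i}$) gives $v_\pi(g_i(\alpha))=1$ and $v_\pi(q_i(\alpha))=e_i-1$. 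Hence $v_\pi(g_i(\alpha)q_i(\alpha))=e_i$, whereas $p^2\mid r_i$ gives $s_i(\alpha)\in pR$ and thus $v_\pi(p\,s_i(\alpha))\ge 2e_i>e_i$, contradicting the key identity. This completes the proof.
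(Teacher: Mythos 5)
The paper does not prove this statement; it is quoted from Stevenhagen's lecture notes with a citation, so there is no in-paper argument to compare against. Your proof is correct and self-contained, and it follows the standard route to the Dedekind criterion: the chain of isomorphisms $\Z[\alpha]/\mathfrak{p}_i\cong\Z_p[x]/\langle\overline{g}_i\rangle$ gives primality (indeed maximality); the identity $g_i(\alpha)q_i(\alpha)=-p\,s_i(\alpha)$ extracted from $f=g_iq_i+ps_i$ drives both directions; and invertibility is correctly reduced to principality of $\mathfrak{m}=\mathfrak{p}_iR$ in $R=\Z[\alpha]_{\mathfrak{p}_i}$, since an ideal of a Noetherian domain is invertible exactly when it is locally principal and $\mathfrak{p}_i$ is the only maximal ideal at which its localization is proper. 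The sufficiency argument (one of $q_i(\alpha)$, $s_i(\alpha)$ is a unit in $R$, so $\mathfrak{m}$ is generated by $p$ or by $g_i(\alpha)$) and the final length/valuation contradiction ($v_\pi(p)=e_i$, $v_\pi(g_i(\alpha))=1$, $v_\pi(q_i(\alpha))=e_i-1$, against $v_\pi(p\,s_i(\alpha))\ge 2e_i$) are both carried out correctly, including the use of the Chinese Remainder Theorem to identify $R/pR$ with $\Z_p[x]/(\overline{g}_i^{\,e_i})$. Two small points would deserve a word in a fully written version: the step from ``$\mathfrak{m}$ principal'' to ``$R$ is a DVR'' uses that $R$ is a one-dimensional Noetherian local domain (so the Krull intersection theorem applies and every element is a unit times a power of the generator), and the divisions $f=g_iq_i+r_i$, $r_i=ps_i$ with $\deg r_i<\deg g_i$ require the lift $g_i$ to be monic of the same degree as $\overline{g}_i$, which is the intended reading of the hypothesis. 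Neither is a gap.
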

	By applying Dedekind-Kummer theorem to the case that $\alpha=\Theta_n$ and $f=f_n(x)$, we obtain the following proposition which gives a simple, but complete characterization when ideals of the form $\langle \Theta_n-c,p\rangle$ with $p$ is prime and $f_n(c)\equiv 0 \Mod{p}$ are invertible. This will be useful in our analysis of the structure of $C(\Z[\Theta_{27}])$.

	\begin{proposition}\label{proposition:invertible-prime}Suppose that integers $c$, $n$ and $p$ satisfy $f_n(c)\equiv 0 \Mod {p}$. If $p$ is prime, then $\langle \Theta_n-c,p\rangle$ is a prime ideal of $\Z[\Theta_n]$. The ideal $\langle \Theta_n-c,p\rangle$ is invertible 
	 if and only if at least one of the following conditions holds.
	\begin{enumerate}
	\item $c$ is a simple root of $f_n(x)$ modulo $p$. 
	\item $p^2$ does not divide $f_n(c)$.
	\end{enumerate}
	\end{proposition}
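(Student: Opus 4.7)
The plan is to apply the Dedekind-Kummer theorem (Theorem~\ref{theorem:Dedekind-Kummer}) directly with $\alpha = \Theta_n$ and $f = f_n$, taking $g_1 = x - c$ as one of the linear factors in the factorization of $\overline{f}_n$ modulo $p$. First, since $f_n(c) \equiv 0 \pmod{p}$ by hypothesis, the reduction $\overline{f}_n \in \Z_p[x]$ has $\overline{x - c}$ as a factor, so we may list it as $\overline{g}_1 = x - c$ in the factorization $\overline{f}_n = \prod_i \overline{g}_i^{e_i}$, with the multiplicity $e_1 \geq 1$ recording how many times $c$ appears as a root modulo $p$. The natural lift is then $g_1 = x - c \in \Z[x]$.

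Next I would compute the remainder $r_1$ of $f_n$ on division by $g_1 = x - c$ in $\Z[x]$. Since $g_1$ is linear, the remainder is simply the constant $r_1 = f_n(c)$; this is just the evaluation/remainder identity for polynomial division. Dedekind-Kummer then immediately yields two conclusions: first, that the ideal $\mathfrak{p}_1 = \langle p, g_1(\Theta_n)\rangle = \langle p, \Theta_n - c\rangle = \langle \Theta_n - c, p\rangle$ is a prime ideal of $\Z[\Theta_n]$; and second, that $\mathfrak{p}_1$ is invertible if and only if either $e_1 = 1$ or $p^2 \nmid r_1 = f_n(c)$.

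To finish, I would translate these two alternatives into the statement of the proposition. The condition $e_1 = 1$ says exactly that the factor $x - c$ appears with multiplicity one in the factorization of $\overline{f}_n$, i.e., that $c$ is a simple root of $f_n(x)$ modulo $p$, giving condition~(1). The condition $p^2 \nmid r_1$ is literally $p^2 \nmid f_n(c)$, giving condition~(2). This completes the proof.

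I do not expect any real obstacle: the proof is essentially a bookkeeping application of Dedekind-Kummer. The only point that deserves a brief comment is the identification of the remainder (trivial because $g_1$ is linear) and the observation that in Theorem~\ref{theorem:Dedekind-Kummer} the lift $g_1$ can be chosen to be $x - c$ itself, so that $g_1(\Theta_n) = \Theta_n - c$ and the prime of the theorem coincides on the nose with the ideal $\langle \Theta_n - c, p\rangle$ under consideration.
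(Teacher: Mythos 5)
Your proposal is correct and follows essentially the same route as the paper: both apply Theorem~\ref{theorem:Dedekind-Kummer} with the lift $g(x)=x-c$, identify the remainder as the constant $f_n(c)$ via the division $f_n(x)=(x-c)q(x)+f_n(c)$, and read off the two invertibility criteria. No gaps.
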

	\begin{proof}[Proof of Proposition~\ref{proposition:invertible-prime}]
Recall $f_n(x)=x^3-nx^2+(n-1)x-1$ is a monic, irreducible polynomial with a root $\Theta_n$. If $f_n(c)\equiv 0\Mod{p}$, then $x-c$ is a factor of $\overline{f_n}$ in $\Z_p[x]$. On the other hand, we can write $f_n(x)=(x-c)q(x)+f_n(c)$. By applying Theorem~\ref{theorem:Dedekind-Kummer} for $\mathfrak{p}=\langle p, \Theta_n-c\rangle$ where $g(x)=x-c$ and $r(x)=f_n(c)$, we obtain the  conclusion.
	\end{proof}

	\begin{proposition}\label{proposition:invertible-primepower}Suppose that $p$ is a prime integer and an integer $c$ satisfies $f_n(c)\equiv 0\Mod {p^k}$ for some positive integer $k$. 
	\begin{enumerate}
	\item If $\langle \Theta_n-c,p\rangle $ is invertible, then $\langle \Theta_n-c,p^k\rangle$ is invertible. 
	\item If $f_n(c)\not\equiv 0\Mod{p^{k+1}}$, then $\langle \Theta_n-c,p^k\rangle$ is invertible.
	\end{enumerate}
	\end{proposition}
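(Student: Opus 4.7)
The plan is to exhibit, in both cases, an explicit companion ideal $J_k$ with $\langle \Theta_n - c, p^k\rangle \cdot J_k = p^k \Z[\Theta_n]$, which is principal and hence witnesses invertibility. The choice of $J_k$ is dictated by the polynomial division $f_n(x) = (x-c)q(x) + f_n(c)$ in $\Z[x]$: substituting $x = \Theta_n$ and using $f_n(\Theta_n) = 0$ yields the key identity
\[ (\Theta_n - c)\,q(\Theta_n) = -f_n(c), \]
which naturally suggests pairing $\Theta_n - c$ with $q(\Theta_n)$. I would set $J_k := \langle q(\Theta_n),\, p^k\rangle$.

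The main computation is then routine ideal multiplication. Expanding the four generator products of $\langle \Theta_n - c, p^k\rangle \cdot \langle q(\Theta_n), p^k\rangle$, using the identity above, and writing $f_n(c) = p^k m$ with $m \in \Z$ (legal since $p^k \mid f_n(c)$ by hypothesis), one obtains
\[ \langle \Theta_n - c,\, p^k\rangle \cdot \langle q(\Theta_n),\, p^k\rangle \;=\; p^k\,\langle m,\ \Theta_n - c,\ q(\Theta_n),\ p^k\rangle. \]
It therefore suffices to show that the bracketed ideal on the right contains $1$.

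For part (2), the hypothesis $p^{k+1} \nmid f_n(c)$ gives $\gcd(m, p) = 1$, so $\gcd(m, p^k) = 1$ and a B\'{e}zout combination yields $1 \in \langle m, p^k\rangle$. For part (1), I would split on the two conditions of Proposition~\ref{proposition:invertible-prime} under which $\langle \Theta_n - c, p\rangle$ is invertible. If $p^2 \nmid f_n(c)$, then the divisibility $p^k \mid f_n(c)$ forces $k \le 1$, and the statement reduces either to the unit ideal or to the hypothesis itself. If instead $c$ is a simple root of $f_n$ modulo $p$, differentiating $f_n(x) = (x-c)q(x) + f_n(c)$ gives $f_n'(c) = q(c)$, so the simple-root condition is equivalent to $q(c) \not\equiv 0 \Mod{p}$. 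Since $q(\Theta_n) - q(c) \in (\Theta_n - c)\Z[\Theta_n]$ by polynomial division carried out in $\Z[\Theta_n]$, the integer $q(c)$ lies in $\langle \Theta_n - c, q(\Theta_n)\rangle$, hence in our ideal; combining $q(c)$ (coprime to $p$) with $p^k$ via B\'{e}zout again produces $1$.

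The conceptual obstacle is not computation but the initial choice of $J_k$; once fixed, the rest is ideal-theoretic bookkeeping together with a case split inherited from Proposition~\ref{proposition:invertible-prime}. The subtlest move is recognizing that, modulo $\Theta_n - c$, the element $q(\Theta_n)$ may be replaced by the integer $q(c)$, so that the simple-root hypothesis actually supplies a unit in $\Z$ that is coprime to $p^k$.
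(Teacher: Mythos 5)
Your argument is correct, and it takes a genuinely different route from the paper's. For part (1) the paper shows $\sqrt{\langle \Theta_n-c,p^k\rangle}=\mathfrak{p}:=\langle\Theta_n-c,p\rangle$ and then invokes Lemma~\ref{lemma:commutativealgebra}, whose proof passes through primary ideals and the localization $R_{\mathfrak{p}}$ being a discrete valuation ring; the payoff is the stronger conclusion that $\langle\Theta_n-c,p^k\rangle$ is literally a power of $\mathfrak{p}$. For part (2) the paper instead multiplies by $\langle\Theta_n-c,q\rangle$ (with $f_n(c)=p^kq$, $\gcd(q,p)=1$) and uses Remark~\ref{remark:decomposition} to land on the principal ideal $\langle\Theta_n-c\rangle$. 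You replace both arguments by a single explicit companion ideal $J_k=\langle q(\Theta_n),p^k\rangle$ coming from the division $f_n(x)=(x-c)q(x)+f_n(c)$, reduce everything to showing $1\in\langle m,\Theta_n-c,q(\Theta_n),p^k\rangle$ where $f_n(c)=p^km$, and finish with B\'ezout --- using $p\nmid m$ in case (2), and in case (1) the dichotomy from Proposition~\ref{proposition:invertible-prime} together with the observation that $f_n'(c)=q(c)$ and $q(c)\equiv q(\Theta_n)\ (\mathrm{mod}\ (\Theta_n-c))$. (Note that in the degenerate branch $p^2\nmid f_n(c)$ of case (1), the standing hypothesis $k\geq 1$ forces $k=1$, so the claim is the hypothesis itself, as you say; and $f_n(c)\neq 0$ by irreducibility, so this branch is genuinely restrictive.) Your proof is more elementary and self-contained --- no localization, no primary decomposition, and it exhibits the inverse ideal explicitly with $\langle\Theta_n-c,p^k\rangle\cdot J_k=p^k\Z[\Theta_n]$ --- at the cost of not recovering the structural statement that the ideal is a power of $\mathfrak{p}$, which the paper does not use elsewhere anyway.
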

	\begin{proof}(1) Denote $I=\langle \Theta_n-c,p^k\rangle$ and $\mathfrak{p}=\langle \Theta_n-c,p\rangle$. Assume that $\mathfrak{p}$ is invertible. We first observe that $\sqrt{I}=\mathfrak{p}$ where $\sqrt{I}$ is the radical of $I$. Let $\alpha$ be an element in $\mathfrak{p}$. We can write $\alpha=xp+y(\Theta_n-c)$ for some $x,y\in \Z[\Theta_n]$. Then $\alpha^k=(xp+y(\Theta_n-c))^k\in I$. This shows that $\mathfrak{p}\subset \sqrt{I}$. Recall that $\sqrt{I}$ is the intersection of all prime ideals which contain $I$. By Proposition~\ref{proposition:invertible-prime}, $\mathfrak{p}$ is a prime ideal which contains $I$. It follows that $\sqrt{I}\subset\mathfrak{p}$. Since we are assuming $\mathfrak{p}$ is invertible, by Lemma~\ref{lemma:commutativealgebra} below, we conclude that $I$ is invertible.

	(2) From the hypothesis, we can write $f_n(c)=p^k\cdot q$ where $q$ is relatively prime to $p$. Then, $\langle \Theta_n-c,p^k\rangle \langle \Theta_n-c,q\rangle =\langle \Theta_n-c,p^k\cdot q\rangle$ by Remark~\ref{remark:decomposition}. Since $f_n(\Theta_n)=0$, we have $p^k\cdot q =f_n(c)-f_n(\Theta_n)\in \langle \Theta_n-c\rangle$. It follows that $\langle \Theta_n-c,p^k\cdot q\rangle=\langle \Theta_n-c\rangle$ which is a principal ideal. This completes the proof.
	\end{proof}

	\begin{lemma}\label{lemma:commutativealgebra}Let $R$ be a number ring and $\mathfrak{p}$ be an invertible prime ideal of $R$. If $I$ is an ideal of $R$ such that $\sqrt{I}=\mathfrak{p}$, then $I=\mathfrak{p}^k$ for some $k$. In particular, $I$ is an invertible ideal.
	\end{lemma}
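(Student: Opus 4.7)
The plan is to reduce to the discrete valuation ring case by localizing at $\mathfrak{p}$. The starting observation is that in the context of the paper, $R$ is an order in a number field, hence a one-dimensional Noetherian domain, so every nonzero prime ideal is maximal. Since $\sqrt{I}=\mathfrak{p}$ is the intersection of all primes containing $I$, every such prime contains $\mathfrak{p}$, and by maximality of $\mathfrak{p}$ it must equal $\mathfrak{p}$. Hence $\mathfrak{p}$ is the unique prime of $R$ containing $I$, which in turn means $R/I$ is a local ring with unique maximal ideal $\mathfrak{p}/I$.

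Next I would use invertibility of $\mathfrak{p}$ to pass to the localization $R_{\mathfrak{p}}$. An invertible ideal becomes principal after localizing at any of its prime divisors, so $\mathfrak{p}R_{\mathfrak{p}}$ is a principal maximal ideal in the Noetherian one-dimensional local domain $R_{\mathfrak{p}}$; that is, $R_{\mathfrak{p}}$ is a DVR. Every nonzero ideal of a DVR is a power of the maximal ideal, so $I R_{\mathfrak{p}} = (\mathfrak{p}R_{\mathfrak{p}})^k = \mathfrak{p}^{k}R_{\mathfrak{p}}$ for some integer $k\ge 0$.

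The final step is to descend this equality from $R_{\mathfrak{p}}$ back to $R$. The key subclaim is that for any ideal $J$ whose only containing prime is $\mathfrak{p}$, the natural map $R/J \to R_{\mathfrak{p}}/JR_{\mathfrak{p}}$ is an isomorphism; equivalently $J = JR_{\mathfrak{p}}\cap R$. This is because $R/J$ is local with maximal ideal $\mathfrak{p}/J$, so every element of $R\setminus \mathfrak{p}$ becomes a unit modulo $J$, and localizing at a set of units does nothing. Applying this to both $I$ (whose only containing prime is $\mathfrak{p}$ by the first paragraph) and to $\mathfrak{p}^{k}$ (whose radical is $\mathfrak{p}$, which is maximal), we conclude
\[
I = IR_{\mathfrak{p}}\cap R = \mathfrak{p}^{k}R_{\mathfrak{p}}\cap R = \mathfrak{p}^{k}.
\]
Finally, invertibility of $\mathfrak{p}^{k}$ follows because the product of invertible ideals is invertible.

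The main obstacle is a foundational one rather than a computational one: the argument relies on $R$ being a Noetherian one-dimensional domain (so that nonzero primes are maximal and localizations make sense) and on the standard identification of $R_{\mathfrak{p}}$ as a DVR when $\mathfrak{p}$ is an invertible prime. Once these features of orders in number fields are invoked, the proof is essentially the commutative-algebra statement that on the \emph{$\mathfrak{p}$-primary spectrum} of $R$ there is a unique ideal of each $\mathfrak{p}$-adic length, namely $\mathfrak{p}^{k}$.
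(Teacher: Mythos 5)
Your proposal is correct and follows essentially the same route as the paper: localize at $\mathfrak{p}$, use invertibility of $\mathfrak{p}$ to see that $R_{\mathfrak{p}}$ is a discrete valuation ring, identify $IR_{\mathfrak{p}}$ with a power of $\mathfrak{p}R_{\mathfrak{p}}$, and contract back to $R$ using that $\mathfrak{p}$ is the unique prime containing $I$ (the paper phrases this via $I$ being $\mathfrak{p}$-primary and cites Atiyah--Macdonald, while you argue directly that elements of $R\setminus\mathfrak{p}$ are units modulo $I$; these are the same point). No gaps.
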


	\begin{proof}By \cite[page 213]{Stevenhagen:2008-1}, every ideal of $R$ is finitely generated, and every prime ideal of $R$ is maximal. In particular, $\mathfrak{p}$ is maximal. By \cite[Proposition 4.2]{Atiyah-Macdonald}, $I$ is $\mathfrak{p}$-primary. That is, $\sqrt{I}=\mathfrak{p}$, and if $xy\in I$, then either $x\in I$ or $y^n\in I$ for some $n>0$.
	
	Let $K$ be the quotient field of $R$. Consider the localization
	$R_\mathfrak{p}=\{\tfrac{r}{s}\in K\mid r\in R, s\notin \mathfrak{p}\}$ of $R$ at~$\mathfrak{p}$ and the canonical homomorphism $f_\mathfrak{p}\colon R\to R_{\mathfrak{p}}$. Since $\mathfrak{p}$ is an invertible prime ideal, every ideal of $R_\mathfrak{p}$ is a power of $\mathfrak{p}R_\mathfrak{p}$ by \cite[Proposition 5.4]{Stevenhagen:2008-1}. In short, $R_\mathfrak{p}$ is a discrete valuation ring. 
	
	Let $I_\mathfrak{p}$ be the extension of $I$. That is, $I_\mathfrak{p}$ is the ideal of $R_\mathfrak{p}$ generated by $f_\mathfrak{p}(I)$. Since $R_\mathfrak{p}$ is a discrete valuation ring, $I_\mathfrak{p}=(\mathfrak{p}R_\mathfrak{p})^k$ for some $k$. Then 
	\[I=f_\mathfrak{p}^{-1}(I_\mathfrak{p})=f_{\mathfrak{p}}^{-1}((\mathfrak{p}R_\mathfrak{p})^k)=(f_{\mathfrak{p}}^{-1}(\mathfrak{p}R_\mathfrak{p}))^k=\mathfrak{p}^k.\]
	We remark that the first equality uses the fact that $I$ is $\mathfrak{p}$-primary (see \cite[Proposition 3.11(2) and Lemma 4.4(3)]{Atiyah-Macdonald}). This completes the proof. 
		\end{proof}
	
	\subsection{The ideal class monoid $C(\Z[\Theta_{49k+27}])$ is not a group}\label{subsection:49k+27}
	In this subsection, we prove that there are infinitely many integers $n$ such that $\Z[\Theta_{n}]$ is not a Dedekind domain. Hence, to study general Cappell-Shaneson spheres, we need to understand equivalence classes of non-invertible ideals of $\Z[\Theta_n]$ for those $n$. For general $n$, finding an explicit  formula for $\# C(\Z[\Theta_n])$ (and its representatives) seems to be a difficult problem in algebraic number theory. Because of these subtleties, proving Conjecture~\ref{conjecture:Gompf} is difficult.
	 
	 \begin{proposition}\label{proposition:49k+27} For any integer $k$, the ideal $\langle \Theta_{49k+27}-2,7\rangle$ is not an invertible ideal in $\Z[\Theta_{49k+27}]$, and hence $C(\Z[\Theta_{49k+27}])$ is not a group. Consequently, $\Z[\Theta_{49k+27}]$ is not a Dedekind domain for any integer $k$.
	 \end{proposition}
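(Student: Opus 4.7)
The plan is to apply Proposition~\ref{proposition:invertible-prime} directly with $p = 7$, $c = 2$, and $n = 49k+27$, and verify that \emph{both} of the sufficient conditions for invertibility fail.

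First I would check the hypothesis $f_n(2) \equiv 0 \pmod{7}$. A straightforward evaluation gives
\[
f_n(2) = 8 - 4n + 2(n-1) - 1 = 5 - 2n,
\]
so for $n = 49k + 27$ we get $f_n(2) = 5 - 2(49k+27) = -49(2k+1)$. In particular $7 \mid f_n(2)$, so the ideal $\langle \Theta_n - 2, 7\rangle$ is well defined (and prime by Proposition~\ref{proposition:invertible-prime}), and moreover $7^2 \mid f_n(2)$, so condition (2) of Proposition~\ref{proposition:invertible-prime} fails.

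Next I would rule out condition (1), namely that $2$ is a simple root of $f_n(x)$ modulo $7$. For this I compute the derivative $f_n'(x) = 3x^2 - 2nx + (n-1)$ and evaluate:
\[
f_n'(2) = 12 - 4n + (n-1) = 11 - 3n.
\]
Substituting $n = 49k + 27$ yields $f_n'(2) = 11 - 3(49k+27) = -7(21k + 10)$, so $7 \mid f_n'(2)$. Thus $2$ is at least a double root of $f_n \pmod{7}$, and condition (1) also fails.

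Since both conditions of Proposition~\ref{proposition:invertible-prime} fail, the prime ideal $\langle \Theta_{49k+27} - 2, 7\rangle$ is not invertible in $\Z[\Theta_{49k+27}]$. Consequently, its class in $C(\Z[\Theta_{49k+27}])$ admits no inverse, so $C(\Z[\Theta_{49k+27}])$ is not a group, and by Theorem~\ref{theorem:Dedekind} the order $\Z[\Theta_{49k+27}]$ is not a Dedekind domain. There is no real obstacle here: the argument reduces to two short polynomial evaluations, the only substantive input being the Dedekind--Kummer criterion encoded in Proposition~\ref{proposition:invertible-prime}.
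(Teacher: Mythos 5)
Your proof is correct and follows essentially the same route as the paper: both verify that $f_{49k+27}(2)=-49(2k+1)$ is divisible by $7^2$ and that $2$ fails to be a simple root of $f_{49k+27}$ modulo $7$, then invoke Proposition~\ref{proposition:invertible-prime}. The only cosmetic difference is that you certify the multiple root via the derivative computation $f_n'(2)=-7(21k+10)$, whereas the paper exhibits the factorization $f_{49k+27}(x)\equiv(x-2)^3\pmod{7}$ directly; both are valid.
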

	 \begin{proof}We first observe that 
	 \[f_{49k+27}(x)=x^3-(49k+27)x^2+(49k+26)x-1\equiv(x-2)^3\Mod{7}.\]
	 It is straightforward to check that $f_{49k+27}(2)=-49(2k+1)$. Therefore, $2$ is not a simple root of $f_{49k+27}(x)\equiv 0\Mod 7$, and $49$ divides $f_{49k+27}(2)$.	 By Proposition~\ref{proposition:invertible-prime}, $\langle \Theta_{49k+27}-2,7\rangle$ is not an invertible ideal of $\Z[\Theta_{49k+27}]$ for any integer $k$.
	 \end{proof}

	 Recall from Theorem~\ref{theorem:Dedekind} that $C(R)$ is not a group if and only if $R$ is not integrally closed. We give another proof of the fact that $C(\Z[\Theta_{49k+27}])$ is not a group by directly showing that $\Z[\Theta_{49k+27}]$ is not integrally closed for any integer $k$.

		\begin{proposition}\label{proposition:49k+27-2}
		For any integer $k$, $\ztheta{49k+27}$ is not integrally closed, and hence $C(\Z[\Theta_{49k+27}])$ is not a group for any integer $k$. Equivalently, $\Z[\Theta_{49k+27}]$ is not a Dedekind domain for any integer~$k$.
	\end{proposition}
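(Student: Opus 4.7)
The plan is to give a direct, computational proof that $\Z[\Theta_n]$ (with $n=49k+27$) fails to be integrally closed by exhibiting an explicit element of $\Q[\Theta_n]\setminus\Z[\Theta_n]$ that is integral over $\Z$. By Theorem~\ref{theorem:Dedekind}, this immediately implies that $C(\Z[\Theta_n])$ is not a group and that $\Z[\Theta_n]$ is not a Dedekind domain.

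First, I would Taylor-expand $f_n(x)$ at $x=2$. A routine calculation yields
\[f_n(x)=(x-2)^3+(6-n)(x-2)^2+(11-3n)(x-2)+(5-2n),\]
and for $n=49k+27$ each of the coefficients $6-n=-7(7k+3)$, $11-3n=-7(21k+10)$, and $5-2n=-49(2k+1)$ is divisible by $7$. Setting $x=\Theta_n$ and using $f_n(\Theta_n)=0$ produces the key identity
\[(\Theta_n-2)^3=7(7k+3)(\Theta_n-2)^2+7(21k+10)(\Theta_n-2)+49(2k+1),\]
so $(\Theta_n-2)^3\in 7\,\Z[\Theta_n]$. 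The candidate integral element is therefore $\alpha=(\Theta_n-2)^2/7\in\Q[\Theta_n]$.

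The heart of the argument is to exhibit an explicit monic cubic over $\Z$ satisfied by $\alpha$. Writing $\beta=\Theta_n-2$, so $\beta^2=7\alpha$ and hence $\beta^3=7\alpha\beta$, I would substitute into the identity above and solve linearly for $\beta$ as a rational expression in $\alpha$; re-inserting the result into $\beta^2=7\alpha$ eliminates $\beta$. With the abbreviations $A=2k+1$, $B=7k+3$, $C=21k+10$, this yields
\[\alpha^3-(2C+7B^2)\alpha^2+(C^2-14AB)\alpha-7A^2=0,\]
whose coefficients are manifestly in $\Z$. Hence $\alpha$ is integral over $\Z$.

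Finally, I would show that $\alpha\notin\Z[\Theta_n]$ using the $\Z$-basis $\{1,\Theta_n,\Theta_n^2\}$: since $7\alpha=(\Theta_n-2)^2=4-4\Theta_n+\Theta_n^2$, any expression $\alpha=a+b\Theta_n+c\Theta_n^2$ with $a,b,c\in\Z$ would force $7c=1$, which has no integer solution. Combined with the previous step, this shows $\Z[\Theta_{49k+27}]$ is not integrally closed, and Theorem~\ref{theorem:Dedekind} finishes the proof. The only real obstacle is computational: carrying out the elimination that produces the cubic for $\alpha$ without arithmetic or sign errors, which is why introducing the abbreviations $A,B,C$ is essentially forced.
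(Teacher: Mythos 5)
Your proof is correct and is essentially the paper's argument: you exhibit the same element $\alpha=\tfrac{1}{7}(\Theta_n-2)^2$ and arrive at exactly the cubic $g_k(x)=x^3-(343k^2+336k+83)x^2+(245k^2+238k+58)x-(28k^2+28k+7)$ that the paper uses, merely deriving it by elimination from the Taylor expansion of $f_n$ at $2$ (which also cleanly re-proves Proposition~\ref{proposition:49k+27}) instead of verifying the paper's identity $343\,g_k\bigl(\tfrac{1}{7}(x-2)^2\bigr)=f_{49k+27}(x)\cdot(\cdots)$ by expansion. The only step the paper includes that you omit is the irreducibility of this cubic (checked mod $2$), which is needed only because the paper's stated definition of an integral element asks for an \emph{irreducible} monic polynomial; since a root of any monic integer polynomial is integral (its minimal polynomial is then monic with integer coefficients by Gauss's lemma), your conclusion stands as written.
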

	
	\begin{proof}Fix an integer $k$ and it suffices to prove that $\ztheta{49k+27}$ is a proper subset of $\mathcal{O}_{49k+27}$ where $\mathcal{O}_{49k+27}$ is the ring of integer of $\Z[\Theta_{49k+27}]$. Set $\eta_k = \frac{1}{7}(\Theta_{49k+27}-2)^2 \in \qtheta{49k+27}$.
		We will show that $\eta_k$ is an integral element or equivalently $\eta_k\in \mathcal{O}_{49k+27}$.
		Let 
		\begin{align*}
		g_k(x)&= x^3 -(343k^2+336k+83)x^2 + (245k^2+238k+58)x -(28k^2+28k+7), \\
		u(x) &= \tfrac{1}{7}(x-2)^2.
		\end{align*}
		Then $g_k(\eta_k)=g_k(u(\Theta_{49k+27}))=0$ since
		\[343g_k(u(x))=f_{49k+27}(x)(x^3+(49k+15)x^2-(343k+142)x+588k+265).\]
		The last equality can be easily checked by expanding terms in both sides.	
	
Now we prove that $g_k$ is irreducible over $\Z$ for any integer $k$. Suppose that the cubic, monic polynomial $g_k$ is reducible over $\Z$. Then $g_k$ is reducible over $\Z_2$ so it has a solution in $\Z_2$. Since $g_k(0)$ and $g_k(1)$ are odd, $g_k$ does not have a solution in $\Z_2$. It follows that $g_k$ is irreducible over $\mathbb{Z}$. 
		That is, $g_k$ is the minimal polynomial of $\eta_k$, and
		hence $\eta_k \in \mathcal{O}_{49k+27}$.
		Since $\eta_k \not \in \ztheta{49k+27}$, this completes the proof that $\ztheta{49k+27}$ is not equal to $\mathcal{O}_{49k+27}$.
	\end{proof}

	\subsection{The computation of the ideal class monoid $C(\Z[\Theta_{27}])$}\label{subsection:27}
	
	In the previous section, we showed that $C(\Z[\Theta_{49k+27}])$ is not a group for any integer $k$. Among $3\leq n \leq 75$, $n=27$ is the only case that $C(\Z[\Theta_n])$ is not a group, but a monoid (this can be checked either using MAGMA or PARI/GP). Nonetheless, MAGMA can still compute the Picard group $\operatorname{Pic}(\Z[\Theta_{27}])$ consists of the ideal  classes of \emph{invertible} ideals in $\Z[\Theta_{27}]$ (see Section~\ref{subsection:27MAGMA}).
	
	 The goal of this subsection is to prove Theorem~\ref{theorem:idealclassmonoid27} where we determine the monoid structure of $C(\Z[\Theta_{27}])$. The key ingredients are Proposition~\ref{proposition:27} and the computation of  $\operatorname{Pic}(\Z[\Theta_{27}])$.

	\begin{proposition}\label{proposition:27} Let $I$ be a non-zero ideal of $\Z[\Theta_{27}]$. Then exactly one of the following holds.
	\begin{enumerate}
	\item $I$ is invertible.
	\item $I$ is equivalent to $\langle \Theta_{27}-2,7\rangle\cdot J$ for some invertible ideal $J$.
	\end{enumerate}
	\end{proposition}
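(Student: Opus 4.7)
The plan is to reduce to ideals of the form $\langle \Theta_{27}-c, p^e\rangle$ with $p$ prime and to analyze these prime-power factors separately. First, by Proposition~\ref{proposition:Aitchison-Rubinstein} we may assume $I \approx \langle \Theta_{27}-c, d\rangle$ with $f_{27}(c) \equiv 0 \Mod d$. Writing $d = 7^k m$ with $\gcd(m, 7) = 1$ and further factoring $m$ into prime powers, Remark~\ref{remark:decomposition} decomposes $I$ as a product of factors $\langle \Theta_{27}-c, p^e\rangle$, so it suffices to classify each.

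For primes $p \neq 7$, I would show each such factor is invertible. By Proposition~\ref{proposition:invertible-primepower}(1) it suffices to verify this for $e = 1$. Since $\Delta(f_{27}) = 7^3 \cdot 1039$, the only primes at which $f_{27}$ has a repeated root modulo $p$ are $7$ and $1039$; for $p \notin \{7, 1039\}$, Proposition~\ref{proposition:invertible-prime}(1) applies directly. For the remaining exceptional prime $p = 1039$, since $1039^2 \nmid \Delta(f_{27})$, a direct check shows that Proposition~\ref{proposition:invertible-prime}(2) holds whenever (1) fails. In the case $p = 7$, the congruence $f_{27}(x) \equiv (x-2)^3 \Mod 7$ forces $c \equiv 2 \Mod 7$; if $7^{k+1} \nmid f_{27}(c)$, Proposition~\ref{proposition:invertible-primepower}(2) yields invertibility.

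The heart of the argument handles the remaining case: non-invertible $\langle \Theta_{27}-c, 7^k\rangle$ with $7^{k+1} \mid f_{27}(c)$. The key computation is the identity
\[\mathfrak{p}^2 = (\Theta_{27}-2) \cdot \mathfrak{p},\]
which follows from $(\Theta_{27}-2)^3 = 7(\Theta_{27}-1)(3\Theta_{27}+1)$ (a consequence of $f_{27}(\Theta_{27})=0$) by exhibiting $49$ as an element of $(\Theta_{27}-2)\mathfrak{p}$. Iterating this identity shows that $\mathfrak{p}^a \approx \mathfrak{p}$ in $C(\Z[\Theta_{27}])$ for every $a \geq 1$, so all non-invertible ideals supported at $7$ collapse into the single equivalence class $[\mathfrak{p}]$. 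Combining with the analysis for $p \neq 7$ produces the desired dichotomy: either $I$ is invertible, or $I \approx \mathfrak{p} \cdot J$ with $J$ the invertible away-from-$7$ part. Mutual exclusivity is immediate, since an invertible ideal of the form $\mathfrak{p} \cdot J$ with $J$ invertible would force $\mathfrak{p}$ itself to be invertible, contradicting Proposition~\ref{proposition:49k+27}. The main obstacle is the final step: verifying rigorously that every non-invertible $\langle \Theta_{27}-c, 7^k\rangle$ reduces to a power of $\mathfrak{p}$ via $\approx$; this likely requires an induction on $k$ together with explicit manipulation of generators using the identity $\mathfrak{p}^2 = (\Theta_{27}-2)\mathfrak{p}$.
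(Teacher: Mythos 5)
Your overall architecture matches the paper's: reduce to $\langle \Theta_{27}-c,d\rangle$ via Proposition~\ref{proposition:Aitchison-Rubinstein}, split into prime-power factors by Remark~\ref{remark:decomposition}, and dispose of all primes $p\neq 7$ using the discriminant $7^3\cdot 1039$ together with Propositions~\ref{proposition:invertible-prime} and~\ref{proposition:invertible-primepower}. Your handling of $p=1039$ (condition (2) of Proposition~\ref{proposition:invertible-prime} holds whenever (1) fails) is exactly what the paper verifies by factoring $f_{27}(x)\equiv(x-453)^2(x-160)\Mod{1039}$ and computing $f_{27}(453)=13\cdot 1039\cdot 6473$. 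Your key identity $\mathfrak{p}^2=(\Theta_{27}-2)\mathfrak{p}$ for $\mathfrak{p}=\langle\Theta_{27}-2,7\rangle$ is also correct: from $f_{27}(x)=(x-2)(x^2-25x-24)-49$ one gets $49=(\Theta_{27}-2)(\Theta_{27}^2-25\Theta_{27}-24)$ with $\Theta_{27}^2-25\Theta_{27}-24\in\mathfrak{p}$, so $49\in(\Theta_{27}-2)\mathfrak{p}$. The paper uses essentially this computation, but only later, to establish $I_0\cdot I_0=I_0$ in Theorem~\ref{theorem:idealclassmonoid27}.

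The genuine gap is precisely the step you flag as ``the main obstacle,'' and it is not a routine verification that can be waved through. Knowing $\mathfrak{p}^a\approx\mathfrak{p}$ for all $a\geq 1$ only disposes of ideals that are (equivalent to) powers of $\mathfrak{p}$, and there is no a priori reason that a non-invertible ideal $\langle\Theta_{27}-c,7^k\rangle$ with $7^{k+1}\mid f_{27}(c)$ is such a power. The standard structural fact that an ideal with radical $\mathfrak{p}$ must be a power of $\mathfrak{p}$ (Lemma~\ref{lemma:commutativealgebra}) requires $\mathfrak{p}$ to be \emph{invertible}, which is exactly what fails here by Proposition~\ref{proposition:49k+27}; so the localization argument that would normally rescue you is unavailable. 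The paper instead proves directly, by descent on $k$, that $\langle\Theta_{27}-c,7^k\rangle\approx\langle\Theta_{27}-c,7^{k-1}\rangle$ whenever $7^{k+1}\mid f_{27}(c)$ (Lemma~\ref{lemma:7} via Propositions~\ref{proposition:claim1} and~\ref{proposition:claim2}); this rests on writing $c=7l+2$, on the identity $(\Theta_{27}-c)(\Theta_{27}-2)^2=7\bigl((3-l)(\Theta_{27}-2)^2+10(\Theta_{27}-2)+7\bigr)$, on the divisibility $7l^3-21l^2-10l-1\equiv 0\Mod{7^{k-1}}$ extracted from the hypothesis, and on roughly a page of explicit two-sided inclusion checks between the relevant generating sets. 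Until you supply this descent, or some substitute showing that every non-invertible $7$-primary factor lands in $[\mathfrak{p}]$, the dichotomy is not established. Your mutual-exclusivity argument at the end is fine as stated.
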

	\begin{proof}Let $I$ be a non-zero ideal of $\Z[\Theta_{27}]$. By Proposition~\ref{proposition:Aitchison-Rubinstein}, $I$ is equivalent to $\langle \Theta_{27}-c,d\rangle$ where $f_{27}(c)\equiv 0\Mod {d}$. Consider the prime factorization $d=p_1^{e_1}p_2^{e_2}\cdots p_m^{e_m}$. By Remark~\ref{remark:decomposition},
	\[\langle \Theta_{27}-c,d\rangle =\langle \Theta_{27}-c,p_1^{e_1}\rangle\langle \Theta_{27}-c,p_2^{e_2}\rangle \cdots \langle \Theta_{27}-c,p_m^{e_m}\rangle.\]
	Since the product of invertible ideals is invertible, it suffices to  determine which ideals $\langle \Theta_{27}-c,p_i^{e_i}\rangle$ are not invertible. (Note that $f_{27}(c)\equiv 0 \Mod {p_i^{e_i}}$ for any $i$.)
	
		Recall that Aitchison and Rubinstein \cite[page~43]{Aitchison-Rubinstein:1984-1} computed the discriminant 
		\[\Delta(f_{27})=27\cdot 25\cdot 24\cdot 22-23=356377\] which has the prime factorization $7^3\cdot 1039$. If a prime $p$ does not divide the discriminant $\Delta(f_{27})$, then every root of $f_{27}(x)$ modulo $p$ is a simple root. 
	By Propositions~\ref{proposition:invertible-prime} and~\ref{proposition:invertible-primepower}, if the prime $p_i$ is not equal to $7$ and $1039$, then $\langle \Theta_{27}-c,p_i^{e_i}\rangle$ is invertible.
	
Consider an ideal $\langle \Theta_{27}-c,1039^k\rangle$ such that $f_{27}(c)\equiv 0\Mod{{1039}^k}$. We show that $\langle \Theta_{27}-c,1039\rangle$ is invertible. By Proposition~\ref{proposition:invertible-primepower}, this implies that $\langle \Theta_{27}-c,1039^k\rangle$ is invertible. Since 
\[f_{27}(x)=x^3-27x^2+26x-1\equiv(x-453)^2(x-160) \Mod{1039},\]
$c=1039l+453$ or $c=1039l+160$. Since 160 is a simple root of $f_{27}(x)\equiv 0\Mod{1039}$, $\langle \Theta_{27}-c,160\rangle$ is invertible by Proposition~\ref{proposition:invertible-prime}. On the other hand, consider the prime factorization $f_{27}(453)=13\cdot 1039\cdot 6473$. By Proposition~\ref{proposition:invertible-prime}, this shows that $\langle \Theta_{27}-c,453\rangle$ is also invertible. 
	
	Now it remains to consider an ideal $\langle \Theta_{27}-c,7^k\rangle$ such that $f_{27}(c)\equiv 0\Mod{{7}^k}$. If $f_{27}(c)\not\equiv 0 \Mod{7^{k+1}}$, then $\langle \Theta_{27}-c,7^k\rangle$ is invertible by Proposition~\ref{proposition:invertible-primepower}(2). If $f_{27}(c)\equiv 0\Mod{7^{k+1}}$, then $\langle \Theta_{27}-c,7^k\rangle\approx \langle \Theta_{27}-2,7\rangle$ by Lemma~\ref{lemma:7} below. This completes the proof. 
	\end{proof}
	\begin{lemma}\label{lemma:7}Let $k$ be a positive integer and $c$ be an integer such that $f_{27}(c)\equiv 0 \Mod{{7^{k+1}}}$. Then $\langle \Theta_{27}-c,7^k\rangle\approx \langle \Theta_{27}-2,7\rangle$. 
	\end{lemma}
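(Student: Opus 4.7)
The plan is to exploit the polynomial identity $f_{27}(x) - f_{27}(c) = (x-c)\,q_c(x)$, where $q_c(x) := x^2 + (c-27)x + (c^2-27c+26)$. Since $f_{27}(x) \equiv (x-2)^3 \pmod 7$, the hypothesis forces $c \equiv 2 \pmod 7$; I will write $c = 2+7l$ and $f_{27}(c) = 7^{k+1}s$ with $l, s \in \Z$, and the master relation
\[
(\Theta_{27}-c)\,q_c(\Theta_{27}) \;=\; -7^{k+1}s
\]
will drive the entire argument. Multiplying $I := \langle \Theta_{27}-c, 7^k\rangle$ by the principal element $q_c(\Theta_{27})$ gives
\[
q_c(\Theta_{27}) \cdot I = \langle -7^{k+1}s,\, 7^k q_c(\Theta_{27})\rangle = 7^k \langle 7s,\, q_c(\Theta_{27})\rangle,
\]
so $I \approx \langle 7s, q_c(\Theta_{27})\rangle$ in $C(\Z[\Theta_{27}])$.

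The analogous computation for $c=2$ (where $(\Theta_{27}-2)q_2(\Theta_{27}) = 49$) yields $\mathfrak{p} \approx \langle 7, q_2(\Theta_{27})\rangle$. A direct expansion shows $q_c(x) - q_2(x) = 7l(x + 7l - 23)$, hence $q_c(\Theta_{27}) \equiv q_2(\Theta_{27}) \pmod{7R}$, and therefore $\langle 7, q_c(\Theta_{27})\rangle = \langle 7, q_2(\Theta_{27})\rangle$ independently of $c$; call this common ideal $J$, so $\mathfrak{p} \approx J$. The remaining task is to prove $\langle 7s, q_c(\Theta_{27})\rangle \approx J$.

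For this I will multiply by $\pi := \Theta_{27}-2$, using the derived identity $\pi\, q_c(\Theta_{27}) = 7\bigl(l\,q_c(\Theta_{27}) - 7^k s\bigr)$, which follows from the master relation via $\Theta_{27}-c = \pi - 7l$. A short Taylor-expansion calculation at $\Theta_{27}=2$ gives $q_c(\Theta_{27}) \equiv 7(7l^2-21l-10) \pmod{\pi R}$; combined with the polynomial identity $l(7l^2 - 21l - 10) - (7l^3 - 21l^2 - 10l - 1) = 1$ (and the hypothesis, which ensures $(7l^3-21l^2-10l-1)/7^{k-1} = s \in \Z$), this shows $l\,q_c(\Theta_{27}) - 7^k s \equiv 7 \pmod{\pi R}$. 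Consequently $\pi \cdot \langle 7s, q_c(\Theta_{27})\rangle = 7 \cdot \langle s\pi,\, 7 + \pi v\rangle$ for an explicit $v \in R$, and the target ideal contains $\mathfrak{p}$ up to an invertible principal factor.

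The hard part will be the last reduction when $s$ carries extra factors of $7$, as in $c=16, k=2$ where $s=-7$. I plan to handle this by iterating the $\pi$-multiplication and invoking the relation $\mathfrak{p}^2 = \pi\mathfrak{p}$, which is a direct consequence of the minimal polynomial $\pi^3 = 21\pi^2 + 70\pi + 49$ of $\pi$ and implies $\mathfrak{p}^j \approx \mathfrak{p}$ for every $j \ge 1$. Writing $s = 7^t s'$ with $\gcd(s',7)=1$, the factorization $\langle 7^{t+1}s', q_c\rangle = \langle 7^{t+1}, q_c\rangle \cdot \langle s', q_c\rangle$ reduces matters to the two separate claims $\langle 7^{t+1}, q_c\rangle \approx \mathfrak{p}$ (proved by induction on $t$ using the $\pi$-multiplication trick above) and the triviality of the Picard contribution $\langle s', q_c\rangle$ against $[\mathfrak{p}]$ (which follows from the near-idempotence $\mathfrak{p}\cdot J' \approx \mathfrak{p}$ for invertible $J'$, a consequence of $\mathfrak{p}^2 = \pi\mathfrak{p}$). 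This finally delivers the equivalence $\langle 7s, q_c(\Theta_{27})\rangle \approx \mathfrak{p}$.
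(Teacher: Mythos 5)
Your opening moves are correct and take a genuinely different route from the paper: the reduction $q_c(\Theta_{27})\cdot\langle \Theta_{27}-c,7^k\rangle=7^k\langle 7s,q_c(\Theta_{27})\rangle$, the identification $\langle 7,q_c(\Theta_{27})\rangle=\langle 7,q_2(\Theta_{27})\rangle\approx\langle\Theta_{27}-2,7\rangle$, and the relation $\mathfrak{p}^2=(\Theta_{27}-2)\mathfrak{p}$ (via $(\Theta_{27}-2)^3=21(\Theta_{27}-2)^2+70(\Theta_{27}-2)+49$) are all valid. The paper instead never leaves ideals of the form $\langle\Theta_{27}-c,7^j\rangle$: it descends one power of $7$ at a time, showing $\langle\Theta_{27}-c,7^j\rangle\approx\langle\Theta_{27}-c,7^{j-1}\rangle$ by an explicit two-sided inclusion argument, so the cofactor $s=f_{27}(c)/7^{k+1}$ never enters. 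Your route introduces $s$, and that is exactly where the proof breaks down.

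The gap is your final claim that $\mathfrak{p}\cdot J'\approx\mathfrak{p}$ for every invertible ideal $J'$ is ``a consequence of $\mathfrak{p}^2=\pi\mathfrak{p}$.'' It is not. The relation $\mathfrak{p}^2=\pi\mathfrak{p}$ only says that $[\mathfrak{p}]$ is an idempotent of the class monoid; an idempotent need not absorb the whole Picard group, i.e.\ the orbit of $[\mathfrak{p}]$ under multiplication by $\operatorname{Pic}(\Z[\Theta_{27}])\cong\Z_6$ could a priori have up to six elements. What actually makes the absorption true here is that the multiplicator ring of $\mathfrak{p}$ is the maximal order $\mathcal{O}_{27}$ \emph{and} that $\mathcal{O}_{27}$ has trivial class group --- the first can be extracted from $\mathfrak{p}^2=\pi\mathfrak{p}$ (it shows $7/(\Theta_{27}-2)=(\Theta_{27}-2)^2/7-3(\Theta_{27}-2)-10$ multiplies $\mathfrak{p}$ into itself), but the second is a nontrivial computational fact that the paper only establishes via the MAGMA verifications behind Table~\ref{table:multiplicationtable} (each invertible class is represented by some $\langle\Theta_{27}-2-7j,49\rangle$, the product $\mathfrak{p}\cdot\langle\Theta_{27}-2-7j,49\rangle$ is independent of $j$, and $\langle\Theta_{27}-2,49\rangle$ is principal). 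Since the ideal $\langle s',q_c(\Theta_{27})\rangle$ has norm dividing $\gcd(s'^3,f_{27}(c)^2)$ and is genuinely non-principal for suitable $c$, you cannot discard it without this input; as written, your argument proves the lemma only modulo an unproved (and not purely formal) statement that the paper itself treats as part of the harder Theorem~\ref{theorem:idealclassmonoid27}. The induction on $t$ for $\langle 7^{t+1},q_c(\Theta_{27})\rangle\approx\mathfrak{p}$ is also only sketched, but the absorption claim is the essential missing piece; to repair the proof along the paper's lines you should keep the second generator equal to a power of $7$ throughout, as in Propositions~\ref{proposition:claim1} and~\ref{proposition:claim2}.
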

	\begin{proof}[Proof of Lemma~\ref{lemma:7}] By Proposition~\ref{proposition:cmod7}, $c\equiv 2\Mod{7}$. If $k=1$, then $\langle \Theta_{27}-c,7\rangle=\langle\Theta_{27}-2,7\rangle$ since $c\equiv 2\Mod{7}$.
	
	If $k\geq 2$, we apply Proposition~\ref{proposition:claim1} several times to  obtain the desired conclusion
	\begin{center}$\langle \Theta_{27}-c,7^k\rangle \approx \langle \Theta_{27}-c,7^{k-1}\rangle \approx\cdots \approx\langle \Theta_{27}-c,7\rangle=\langle \Theta_{27}-2,7\rangle$.\end{center}
The last equality follows because $c\equiv 2\Mod{7}$.\end{proof}
	\begin{proposition}\label{proposition:cmod7}Let $c$ be an integer such that $f_{27}(c)\equiv 0\Mod{7}$. Then $c\equiv2\Mod {7}$.
	\end{proposition}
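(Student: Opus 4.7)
The plan is to reduce $f_{27}$ modulo $7$, factor it completely over $\mathbb{Z}/7\mathbb{Z}$, and then exploit the fact that $\mathbb{Z}/7\mathbb{Z}$ is a field.

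First, I would reduce the coefficients of $f_{27}(x)=x^3-27x^2+26x-1$ modulo $7$. Since $27\equiv -1\Mod 7$ and $26\equiv -2\Mod 7$, we have
\[ f_{27}(x)\equiv x^3+x^2-2x-1\Mod 7. \]
Next I would verify by direct expansion that $(x-2)^3=x^3-6x^2+12x-8\equiv x^3+x^2-2x-1\Mod 7$, so that
\[ f_{27}(x)\equiv (x-2)^3\Mod 7. \]
In fact, this factorization is already the special case $k=0$ of the congruence $f_{49k+27}(x)\equiv (x-2)^3\Mod 7$ that was established in the proof of Proposition~\ref{proposition:49k+27}, so no new computation is strictly required.

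Given this factorization, the hypothesis $f_{27}(c)\equiv 0\Mod 7$ translates to $(c-2)^3\equiv 0\Mod 7$. Since $\mathbb{Z}/7\mathbb{Z}$ is a field, in particular an integral domain with no nonzero nilpotent elements, this forces $c-2\equiv 0\Mod 7$, that is, $c\equiv 2\Mod 7$, as required.

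There is no real obstacle; the statement is essentially a one-line consequence of the factorization $f_{27}\equiv (x-2)^3\Mod 7$ already used to prove Proposition~\ref{proposition:49k+27}, combined with the fact that $\mathbb{Z}/7\mathbb{Z}$ has no zero divisors.
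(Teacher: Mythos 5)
Your proposal is correct and follows exactly the same route as the paper, which also observes $f_{27}(x)\equiv (x-2)^3\Mod{7}$ and concludes immediately; you have merely made explicit the coefficient reduction and the appeal to $\mathbb{Z}/7\mathbb{Z}$ being a field. No issues.
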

	\begin{proof}Since $f_{27}(x)=x^3-27x^2+26x-1\equiv (x-2)^3\Mod 7$, the conclusion directly follows.
	\end{proof}

	\begin{proposition}\label{proposition:claim1}If $k\geq 2$ and $f_{27}(c)\equiv 0 \Mod{{7^{k+1}}}$, then $\langle \Theta_{27}-c,7^k\rangle\approx \langle \Theta_{27}-c,7^{k-1}\rangle$. 
	\end{proposition}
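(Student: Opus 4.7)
The plan is to exhibit nonzero elements $\alpha, \beta \in \Z[\Theta_{27}]$ satisfying $\alpha I_k = \beta I_{k-1}$, where I write $I_j := \langle \Theta_{27}-c, 7^j\rangle$.

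\textbf{Setup.} By Proposition~\ref{proposition:cmod7} I write $c = 2 + 7t$ with $t \in \Z$ and set $\pi = \Theta_{27} - 2$, so $\Theta_{27} - c = \pi - 7t$. The defining relation $f_{27}(\Theta_{27}) = 0$ reads $\pi^3 = 7(3\pi^2 + 10\pi + 7)$, so $\pi^3 \in 7\Z[\Theta_{27}]$ and hence $(\pi-7t)^3 \in 7\Z[\Theta_{27}]$ by the binomial expansion. The polynomial identity
\[ f_{27}(x) - f_{27}(y) = (x-y)\bigl(x^2+xy+y^2-27(x+y)+26\bigr), \]
specialized at $x = \Theta_{27}$, $y = c$, together with $f_{27}(\Theta_{27}) = 0$ gives
\[ h \cdot (\Theta_{27} - c) = -f_{27}(c) = 7^{k+1}\tilde m \]
for some $\tilde m \in \Z$, where $h := \Theta_{27}^2 + c\Theta_{27} + c^2 - 27(\Theta_{27}+c)+26$. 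A direct expansion using $c\equiv 2\pmod 7$ shows $h \equiv (\pi-7t)^2 \pmod{7\Z[\Theta_{27}]}$, so $h = (\pi-7t)^2 + 7v$ for some $v \in \Z[\Theta_{27}]$.

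\textbf{Main computation.} The natural choice is $\alpha = h$ and $\beta = 7h$, which yields
\begin{align*}
\alpha I_k &= \langle h(\pi-7t),\, h \cdot 7^k\rangle = \langle 7^{k+1}\tilde m,\, 7^k h\rangle,\\
\beta I_{k-1} &= \langle 7h(\pi-7t),\, 7h \cdot 7^{k-1}\rangle = \langle 7^{k+2}\tilde m,\, 7^k h\rangle.
\end{align*}
The containment $\beta I_{k-1} \subseteq \alpha I_k$ is immediate. The reverse inclusion reduces, after dividing generators by $7^k$, to the claim $7\tilde m \in \langle 49\tilde m,\, h\rangle$; equivalently, to producing $A, B \in \Z[\Theta_{27}]$ with $7\tilde m = 49\tilde m \cdot A + h \cdot B$. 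The tools I would combine to find such $A$ and $B$ are the decomposition $h = (\pi-7t)^2 + 7v$, the inclusion $(\pi-7t)^3 \in 7\Z[\Theta_{27}]$, the identity $h(\pi-7t) = 7^{k+1}\tilde m$, and the hypothesis $k \geq 2$, which supplies the extra factor of $7$ unavailable at $k=1$.

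\textbf{The main obstacle.} The technical heart of the proof is the last construction of $A$ and $B$. The difficulty is that $h$ lies in the non-invertible prime $\mathfrak{p} = \langle \Theta_{27}-2, 7\rangle$ but $h \notin 7\Z[\Theta_{27}]$ (since $h \equiv \pi^2 \pmod 7$ and $\pi^2 \notin 7\Z[\Theta_{27}]$), so neither of $h$ and $7\tilde m$ divides the other in any direct way. I expect the argument must split according to $v_7(\tilde m)$: if $7 \mid \tilde m$ the hypothesis in fact gives $7^{k+2} \mid f_{27}(c)$, so one can replace $k$ by $k+1$ and descend; otherwise $\tilde m$ is a $7$-adic unit and a direct 7-adic manipulation (using $(\pi-7t)^3 \in 7\Z[\Theta_{27}]$ to absorb the residual $\pi^2$ in $h$) produces the desired $A$ and $B$.
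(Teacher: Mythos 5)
Your reduction of the problem to the membership $7\tilde m\in\langle 49\tilde m,\,h\rangle$ is a reduction to a false statement, and in fact the choice $\alpha=h$, $\beta=7h$ can never satisfy $\alpha I_k=\beta I_{k-1}$. The obstruction is an index (norm) count: for any nonzero $\gamma$ and any finite-index ideal $J$ one has $[\Z[\Theta_{27}]:\gamma J]=|N(\gamma)|\cdot[\Z[\Theta_{27}]:J]$, and $[\Z[\Theta_{27}]:\langle\Theta_{27}-c,7^j\rangle]=7^j$. Hence $[\Z[\Theta_{27}]:hI_k]=|N(h)|\,7^k$ while $[\Z[\Theta_{27}]:7hI_{k-1}]=|N(7)||N(h)|\,7^{k-1}=|N(h)|\,7^{k+2}$; the two ideals differ in index by a factor of $49$, so the inclusion $\beta I_{k-1}\subseteq\alpha I_k$ that you correctly observe is necessarily strict. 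Concretely, take $c=16$, $k=3$: then $f_{27}(16)=-7^4$, so $\tilde m=1$ and $h=(\Theta_{27}-2)^2-7(\Theta_{27}-2)-168$, and your required membership becomes $7\in\langle 49,h\rangle$. Writing $\pi=\Theta_{27}-2$ (so $\pi^3=21\pi^2+70\pi+49$), one checks that $\Z[\Theta_{27}]/\langle 49,h\rangle\cong(\Z/49)[y]/(y^2-7y+28)$, a free $\Z/49$-module of rank $2$ in which $7\neq 0$. So no $A,B$ with $7=49A+hB$ exist; the ``direct $7$-adic manipulation'' you defer to in the case $7\nmid\tilde m$ cannot be carried out, and this is precisely the main case.

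The fix requires choosing $\alpha,\beta$ with $|N(\beta)|=7|N(\alpha)|$. The paper takes $\alpha=7$ and $\beta=(\Theta_{27}-2)^2$ (norms $343$ and $49^2=7\cdot 343$), and proves $7\langle\Theta_{27}-c,7^k\rangle=(\Theta_{27}-2)^2\langle\Theta_{27}-c,7^{k-1}\rangle$ using the identity
\[(\Theta_{27}-c)(\Theta_{27}-2)^2=7\bigl((3-l)(\Theta_{27}-2)^2+10(\Theta_{27}-2)+7\bigr),\qquad c=7l+2,\]
together with a separate and genuinely nontrivial computation (Proposition~\ref{proposition:claim2}) identifying $\langle\Theta_{27}-c,7^k\rangle$ with $\langle(3-l)(\Theta_{27}-2)^2+10(\Theta_{27}-2)+7,\,7^{k-2}(\Theta_{27}-2)^2\rangle$. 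That identification is the real content of the proof, and nothing in your outline substitutes for it.
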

	\begin{proof}Note that
	\[f_{27}(x)=x^3-27x^2+26x-1=(x-2)^3-7(3(x-2)^2+10(x-2)+7).\]  
	Since $\Theta_{27}$ is a root of $f_{27}(x)$, $(\Theta_{27}-2)^3=7(3(\Theta_{27}-2)^2+10(\Theta_{27}-2)+7)$. By Proposition~\ref{proposition:cmod7}, we can write $c=7l+2$ for some $l\in\Z$. 
It follows that 
\[(\Theta_{27}-c)(\Theta_{27}-2)^2=(\Theta_{27}-2)^3-7l(\Theta_{27}-2)^2=7((3-l)(\Theta_{27}-2)^2+10(\Theta_{27}-2)+7).\]
	
In Proposition~\ref{proposition:claim2}, if $k\geq 2$, then we will observe that 
\[\langle \Theta_{27}-c,7^k\rangle=\langle (3-l)(\Theta_{27}-2)^2+10(\Theta_{27}-2)+7,7^{k-2}(\Theta_{27}-2)^2\rangle.\]
 This observation completes the proof since 
	\begin{align*}\langle \Theta_{27}-c,7^{k-1}\rangle&\approx \langle (\Theta_{27}-c)(\Theta_{27}-2)^2,7^{k-1}(\Theta_{27}-2)^2\rangle\\
	&=7\langle (3-l)(\Theta_{27}-2)^2+10(\Theta_{27}-2)+7,7^{k-2}(\Theta_{27}-2)^2\rangle\\&\approx \langle (3-l)(\Theta_{27}-2)^2+10(\Theta_{27}-2)+7,7^{k-2}(\Theta_{27}-2)^2\rangle\\
	&=\langle \Theta_{27}-c,7^k\rangle\end{align*}
where we have used the equality $(\Theta_{27}-c)(\Theta_{27}-2)^2=7((3-l)(\Theta_{27}-2)^2+10(\Theta_{27}-2)+7)$.
\end{proof}

\begin{proposition}\label{proposition:claim2}If $k\geq 2$ and $f_{27}(c)\equiv 0 \Mod{{7^{k+1}}}$, then $\langle \Theta_{27}-c,7^k\rangle=\langle (3-l)(\Theta_{27}-2)^2+10(\Theta_{27}-2)+7,7^{k-2}(\Theta_{27}-2)^2\rangle$. 
\end{proposition}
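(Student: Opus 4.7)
The plan is to introduce convenient coordinates, establish one inclusion directly, and complete the argument by matching $\Z$-module indices.

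Set $\alpha := \Theta_{27} - 2$, so the minimal polynomial relation for $\Theta_{27}$ becomes $\alpha^3 = 21\alpha^2 + 70\alpha + 49$, and write $c = 7l + 2$ as in Proposition~\ref{proposition:cmod7}, so that $\Theta_{27} - c = \alpha - 7l$. Expanding $f_{27}(7l+2)$ yields $49\,g(l)$, where $g(l) := 7l^3 - 21l^2 - 10l - 1$, so the hypothesis $f_{27}(c) \equiv 0 \Mod{7^{k+1}}$ translates into $g(l) \equiv 0 \Mod{7^{k-1}}$. Throughout I work in the free $\Z$-basis $\{1, \alpha, \alpha^2\}$ of $\Z[\Theta_{27}]$, recording each element by its coordinate vector in $\Z^3$.

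First I establish the inclusion $\langle \beta, 7^{k-2}\alpha^2 \rangle \subseteq \langle \alpha - 7l, 7^k \rangle$. Multiplying the identity $\alpha^2 = (\alpha - 7l)^2 + 14l(\alpha - 7l) + 49l^2$ by $7^{k-2}$ places $7^{k-2}\alpha^2$ in the right-hand ideal. Reducing $\beta$ modulo $\alpha - 7l$ (that is, substituting $\alpha = 7l$) gives
\[
\beta \equiv (3-l)(7l)^2 + 10(7l) + 7 = 7\bigl(7l^2(3-l) + 10l + 1\bigr) = -7\,g(l) \Mod{\alpha - 7l},
\]
and by the hypothesis this is $\equiv 0 \Mod{7^k}$, whence $\beta \in \langle \alpha - 7l, 7^k \rangle$.

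For the reverse inclusion I match $\Z$-module indices. The substitution $\alpha \mapsto 7l$ identifies $\Z[\Theta_{27}]/\langle \alpha - 7l, 7^k \rangle$ with $\Z/7^k$, so the left-hand ideal has index exactly $7^k$ in $\Z^3$. On the right I exhibit three elements of $\langle \beta, 7^{k-2}\alpha^2 \rangle$: the vectors $\beta = (7, 10, 3-l)$ and $7^{k-2}\alpha^2 = (0, 0, 7^{k-2})$, and the multiple $(\alpha - 21 + 7l)\beta \in \langle \beta \rangle$. A direct expansion using $\alpha^3 = 21\alpha^2 + 70\alpha + 49$ gives
\[
(\alpha - 21 + 7l)\beta \;=\; 7\alpha + (10 + 21l - 7l^2)\alpha^2 \;=\; (0,\; 7,\; 10 + 21l - 7l^2).
\]
Stacked as rows, these three vectors form an upper-triangular matrix with diagonal $(7,\,7,\,7^{k-2})$, so they span a sub-$\Z$-module of $\langle \beta, 7^{k-2}\alpha^2\rangle$ of index $7^k$ in $\Z^3$. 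Combined with the surjection $\Z[\Theta_{27}]/\langle \beta, 7^{k-2}\alpha^2 \rangle \twoheadrightarrow \Z[\Theta_{27}]/\langle \alpha - 7l, 7^k \rangle$ furnished by the first inclusion, this forces both indices to equal $7^k$ and hence the two ideals to coincide.

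The main obstacle is spotting the auxiliary multiple $(\alpha - 21 + 7l)\beta$: the shift $21 - 7l = 7(3-l)$ is designed precisely so that, after replacing $\alpha^3$ by $21\alpha^2 + 70\alpha + 49$, both the constant term $49(3-l)$ and the excess $70(3-l)$ in the coefficient of $\alpha$ coming from $(3-l)\alpha^3$ cancel exactly against the contributions of $7(3-l)\beta$, leaving a vector with first coordinate $0$ and second coordinate exactly $7$. Once this calculation is in hand the upper-triangular structure and the index comparison finish the proof without any further hypothesis-dependent arithmetic.
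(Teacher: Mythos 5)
Your proof is correct, but the route you take for the harder inclusion is genuinely different from the paper's. Both arguments begin the same way: set $\alpha=\Theta_{27}-2$, write $c=7l+2$, translate the hypothesis into $g(l)=7l^3-21l^2-10l-1\equiv 0\Mod{7^{k-1}}$, and verify the containment $\langle \beta,7^{k-2}\alpha^2\rangle\subset\langle\alpha-7l,7^k\rangle$ by direct manipulation (your evaluation $\beta\equiv -7g(l)\Mod{\alpha-7l}$ is a slicker packaging of the paper's identity for $\beta$ in terms of $\Theta_{27}-c$). Where you diverge is the reverse inclusion: the paper grinds through the explicit identities (a)--(e), including a coprimality trick with $7^{k-1}$ and $7(10l-1)(3-l)+100$ and a final factorization $(\Theta_{27}-23)\beta$, to place $7^k$, $7^{k-1}\alpha$, and $\Theta_{27}-c$ in the right-hand ideal one by one. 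You instead compare indices of the two ideals as sublattices of $\Z[\Theta_{27}]\cong\Z^3$: the left ideal has index exactly $7^k$ (since $\Z[\alpha]/\langle\alpha-7l,7^k\rangle\cong\Z/\gcd(f_{27}(c),7^k)=\Z/7^k$, using $7^{k+1}\mid f_{27}(c)$), while the triangular system $\{\beta,\,(\alpha-21+7l)\beta,\,7^{k-2}\alpha^2\}$ with diagonal $(7,7,7^{k-2})$ bounds the right ideal's index above by $7^k$; together with the inclusion already proved this forces equality. I checked the key computation $(\alpha+7l-21)\beta=7\alpha+(10+21l-7l^2)\alpha^2$ against $\alpha^3=21\alpha^2+70\alpha+49$ and it is right. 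Your approach buys a substantially shorter and less error-prone verification of the reverse inclusion, at the cost of invoking the (standard) finite-index/determinant formalism for full-rank sublattices; the paper's proof is longer but entirely elementary ideal arithmetic.
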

\begin{proof}Recall that 
\[f_{27}(x)=x^3-27x^2+26x-1=(x-2)^3-7(3(x-2)^2+10(x-2)+7).\]
By Proposition~\ref{proposition:cmod7}, we can write $c=7l+2$ for some $l\in\Z$. Then $f_{27}(c)=49(7l^3-21l^2-10l-1)$. Since $f_{27}(c)\equiv 0 \Mod{7^{k+1}}$ and $k\geq 2$, \[7l^3-21l^2-10l-1\equiv 0\Mod{7^{k-1}}.\] 

We first prove that $\langle (3-l)(\Theta_{27}-2)^2+10(\Theta_{27}-2)+7,7^{k-2}(\Theta_{27}-2)^2\rangle\subset \langle \Theta_{27}-c,7^k\rangle$.
Since $k\geq 2$, the following computation shows that $7^{k-2}(\Theta_{27}-2)^2\in \langle \Theta_{27}-c,7^k\rangle$:
	\begin{align*}7^{k-2}(\Theta_{27}-2)^2&=7^{k-2}(\Theta_{27}-c+7l)^2=(\Theta_{27}-c)\left(7^{k-2}(\Theta_{27}-c)+2\cdot 7^{k-1}l\right)+7^kl^2.\end{align*}
	Since $c=7l+2$,  $(3-l)(\Theta_{27}-2)^2+10(\Theta_{27}-2)+7=(3-l)(\Theta_{27}-c+7l)^2+10(\Theta_{27}-c+7l)+7$. The right hand side is equal to $(\Theta_{27}-c)\big((3-l)(\Theta_{27}-c+14l)+10\big)-7(7l^3-21l^2-10l-1)$ which is in the ideal $\langle \Theta_{27}-c,7^{k}\rangle$ since we observed $7l^3-21l^2-10l-1\equiv0\Mod{7^{k-1}}$.

Now we prove $ \langle \Theta_{27}-c,7^k\rangle\subset \langle (3-l)(\Theta_{27}-2)^2+10(\Theta_{27}-2)+7,7^{k-2}(\Theta_{27}-2)^2\rangle$. We consider two equalities
\begin{align*}\tag{a} 7^k&=7^{k-1}\big((3-l)(\Theta_{27}-2)^2+10(\Theta_{27}-2)+7\big)-\big((3-l)(\Theta_{27}-2)+10\big)7^{k-1}(\Theta_{27}-2).\\
\tag{b}
7^{k-1}&=7^{k-2}\big((3-l)(\Theta_{27}-2)^2+10(\Theta_{27}-2)+7\big)-\big((3-l)(\Theta_{27}-2)+10\big)7^{k-2}(\Theta_{27}-2).
\end{align*}
From (a) and (b), $7^{k-1}(\Theta_{27}-2)$ and $7^k$ are in $ \langle (3-l)(\Theta_{27}-2)^2+10(\Theta_{27}-2)+7,7^{k-2}(\Theta_{27}-2)^2\rangle$.

It remains to prove $\Theta_{27}-c\in \langle (3-l)(\Theta_{27}-2)^2+10(\Theta_{27}-2)+7,7^{k-2}(\Theta_{27}-2)^2\rangle$. Since $7^{k-1}$ and $7(10l-1)(3-l)+100$ are coprime, it suffices to prove 
\begin{align*}
\tag{c} 7^{k-1}(\Theta_{27}-c)\in  \langle (3-l)(\Theta_{27}-2)^2+10(\Theta_{27}-2)+7,7^{k-2}(\Theta_{27}-2)^2\rangle.\\
\tag{d} (7(10l-1)(3-l)+100)(\Theta_{27}-c)\in  \langle (3-l)(\Theta_{27}-2)^2+10(\Theta_{27}-2)+7,7^{k-2}(\Theta_{27}-2)^2\rangle.
\end{align*}
By (b), (c) directly follows. Consider
\begin{align*}
&\big(7(10l-1)(3-l)+100\big)(\Theta_{27}-c)\\
&=\big(70l(3-l)+100-7(3-l)\big)(\Theta_{27}-c)\\
&=10\big((3-l)(\Theta_{27}-2)+7l(3-l)+10\big)(\Theta_{27}-c)-(3-l)\big(10(\Theta_{27}-2)+7\big)(\Theta_{27}-c)\\
&=10\big((3-l)(\Theta_{27}-c)+14l(3-l)+10\big)(\Theta_{27}-c)-(3-l)\big(10(\Theta_{27}-2)+7\big)(\Theta_{27}-c).
\end{align*}
Therefore, to prove (d), it suffices to prove that the following two terms in (\ref{item:twoideals}) are in the ideal $\langle (3-l)(\Theta_{27}-2)^2+10(\Theta_{27}-2)+7,7^{k-2}(\Theta_{27}-2)^2\rangle$:
\[\tag{e}\label{item:twoideals} \big((3-l)(\Theta_{27}-c)+14l(3-l)+10\big)(\Theta_{27}-c)\textrm{~and~} \big(10(\Theta_{27}-2)+7\big)(\Theta_{27}-c).\]
The first term $\big((3-l)(\Theta_{27}-c)+14l(3-l)+10\big)(\Theta_{27}-c)$ of (e) is equal to 
\begin{align*}
(3-l)(\Theta_{27}-2)^2+10(\Theta_{27}-2)+7-\big((3-l)49l^2+70l+7\big).
\end{align*}
Recall that we observed $7l^3-21l^2-10l-1\equiv 0\Mod{7^{k-1}}$ in the beginning of the proof. It follows that $(3-l)49l^2+70l+7=-7(7l^3-21l^2-10l-1)\equiv 0\Mod{7^k}$. Since we proved $7^k\in \langle (3-l)(\Theta_{27}-2)^2+10(\Theta_{27}-2)+7,7^{k-2}(\Theta_{27}-2)^2\rangle$, the term $\big((3-l)(\Theta_{27}-c)+14l(3-l)+10\big)(\Theta_{27}-c)$ is also in $\langle (3-l)(\Theta_{27}-2)^2+10(\Theta_{27}-2)+7,7^{k-2}(\Theta_{27}-2)^2\rangle$.

To show the second term $\big(10(\Theta_{27}-2)+7\big)(\Theta_{27}-c)$ of (e) is in $\langle (3-l)(\Theta_{27}-2)^2+10(\Theta_{27}-2)+7,7^{k-2}(\Theta_{27}-2)^2\rangle$, consider
\begin{align*}&\big(10(\Theta_{27}-2)+7\big)(\Theta_{27}-c)\\&=\big(10(\Theta_{27}-2)+7\big)(\Theta_{27}-2-7l)\\
&=10(\Theta_{27}-2)^2+7(\Theta_{27}-2)-70l(\Theta_{27}-2)-49l\\
&=10(\Theta_{27}-2)^2+7(\Theta_{27}-2)+(3-l)70(\Theta_{27}-2)+49(3-l)-210(\Theta_{27}-2)-147\\
&=10(\Theta_{27}-2)^2+7(\Theta_{27}-2)+(3-l)\big(21(\Theta_{27}-2)^2+70(\Theta_{27}-2)+49\big)-21(3-l)(\Theta_{27}-2)^2\\&-210(\Theta_{27}-2)-147\\
&=10(\Theta_{27}-2)^2+7(\Theta_{27}-2)+(3-l)(\Theta_{27}-2)^3-21\big((3-l)(\Theta_{27}-2)^2+10(\Theta_{27}-2)+7\big)\\
&=(\Theta_{27}-23)\big((3-l)(\Theta_{27}-2)^2+10(\Theta_{27}-2)+7\big).
\end{align*}
Note that we used the equality $(\Theta_{27}-2)^3=21(\Theta_{27}-2)^2+70(\Theta_{27}-2)+49$. This completes the proof.
	\end{proof}
\begin{theorem}\label{theorem:idealclassmonoid27}The ideal class monoid $C(\Z[\Theta_{27}])$ consists of the following $7$ elements, 
\begin{align*}I_0&=[\langle \Theta_{27}-2,7\rangle],\\
 I_1&=[\langle \Theta_{27}-7,17\rangle],\\ I_2&=[\langle\Theta_{27}-4,5\rangle],\\ I_3&=[\langle \Theta_{27}-11,13\rangle],\\
I_4&=[\langle \Theta_{27}-10,11\rangle],\\ I_5&=[\langle \Theta_{27}-14,19\rangle],\\ I_6&=[\langle \Theta_{27},-1,1\rangle].
 \end{align*}
 The multiplication table of $C(\Z[\Theta_{27}])$ is given in Table~\ref{table:multiplicationtable}.
\end{theorem}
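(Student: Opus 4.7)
My plan is to leverage the structural decomposition given by Proposition~\ref{proposition:27} together with a direct MAGMA computation of $\operatorname{Pic}(\Z[\Theta_{27}])$, and then verify that multiplication by $I_0$ collapses all six invertible classes to a single non-invertible class.

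First I would use MAGMA to compute $\operatorname{Pic}(\Z[\Theta_{27}])$ explicitly, expecting it to return a group of order six whose classes are represented by the ideals $I_1,\ldots,I_6$. For each such candidate, I would verify invertibility by Proposition~\ref{proposition:invertible-prime}: it suffices to check that either $c$ is a simple root of $f_{27}$ modulo the relevant prime $p$, or $p^2\nmid f_{27}(c)$. For instance, $f_{27}(7)=-799=-17\cdot 47$, $f_{27}(4)=-265=-5\cdot 53$, $f_{27}(11)=-1651=-13\cdot 127$, $f_{27}(10)=-1441=-11\cdot 131$, $f_{27}(14)=-2185=-5\cdot 19\cdot 23$, so in every case the square of the prime fails to divide $f_{27}(c)$, ensuring invertibility. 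The class $I_6=[\langle 1\rangle]$ is the trivial identity. Combined with Proposition~\ref{proposition:invertible-prime} and the fact that $49\mid f_{27}(2)$ and $2$ is a triple root of $f_{27}$ modulo $7$, the class $I_0$ is confirmed non-invertible, so $I_0\notin\operatorname{Pic}(\Z[\Theta_{27}])$.

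Next, by Proposition~\ref{proposition:27} every non-invertible ideal class in $C(\Z[\Theta_{27}])$ equals $I_0\cdot[J]$ for some invertible $[J]\in\operatorname{Pic}(\Z[\Theta_{27}])$, so the set of non-invertible classes coincides with $I_0\cdot\operatorname{Pic}(\Z[\Theta_{27}])$. The heart of the proof is to show that this set has only one element, namely $I_0$ itself, equivalently that $I_0\cdot[J]=[I_0]$ for every $[J]\in\operatorname{Pic}(\Z[\Theta_{27}])$. Since $\operatorname{Pic}(\Z[\Theta_{27}])$ has order six, it is either cyclic or $\Z/2\times\Z/3$, and MAGMA will identify its generators; by multiplicativity it suffices to check the absorption $I_0\cdot[J]=[I_0]$ only on a set of generators. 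The verification is carried out explicitly: writing out $I_0\cdot I_j=\langle (\Theta_{27}-2)(\Theta_{27}-c_j),\, 7(\Theta_{27}-c_j),\, p_j(\Theta_{27}-2),\, 7p_j\rangle$ for the appropriate $c_j,p_j$, one exhibits non-zero $\alpha,\beta\in\Z[\Theta_{27}]$ with $\alpha\cdot I_0\cdot I_j=\beta\cdot I_0$ by using the key relation $(\Theta_{27}-2)^3=7\bigl(3(\Theta_{27}-2)^2+10(\Theta_{27}-2)+7\bigr)$ which already underpinned the proofs of Propositions~\ref{proposition:claim1} and~\ref{proposition:claim2}.

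Putting the pieces together, $C(\Z[\Theta_{27}])$ is the disjoint union of the six invertible classes $I_1,\ldots,I_6$ returned by MAGMA and the single non-invertible class $I_0$, giving exactly seven elements. The multiplication table is then assembled from two pieces: the group operation on $\operatorname{Pic}(\Z[\Theta_{27}])$ computed by MAGMA, and the rule $I_0\cdot I_j=I_0$ for $j=0,1,\ldots,6$ established in the absorption step (in particular $I_0^2=I_0$, which is also directly verifiable from the relation above). The main obstacle will be the explicit ideal arithmetic in the absorption step; reducing to a generating set of $\operatorname{Pic}$ and systematically applying the cubic relation for $(\Theta_{27}-2)^3$ keeps the calculations tractable.
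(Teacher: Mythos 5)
Your proposal follows essentially the same route as the paper: Proposition~\ref{proposition:27} reduces everything to the invertible classes plus the single coset $I_0\cdot\operatorname{Pic}(\Z[\Theta_{27}])$, MAGMA supplies $\operatorname{Pic}(\Z[\Theta_{27}])\cong\Z_6$ with the stated representatives, and the remaining work is the absorption $I_0\cdot I_j=I_0$. The only substantive difference is in how that absorption is verified: you propose a by-hand computation $\alpha\cdot I_0\cdot I_j=\beta\cdot I_0$ on a generating set using the relation $(\Theta_{27}-2)^3=7\bigl(3(\Theta_{27}-2)^2+10(\Theta_{27}-2)+7\bigr)$, whereas the paper first re-represents each invertible class by an ideal $\langle\Theta_{27}-2-7k,49\rangle$ supported at $7$ (with $\langle\Theta_{27}-2,49\rangle$ principal) and then checks the six products by MAGMA; your reduction to a single generator is a nice economy, but for a representative such as $\langle\Theta_{27}-7,17\rangle$, supported away from $7$, the cubic relation alone gives little traction, so in practice you would need the paper's representative-switching step (or a direct MAGMA ideal-product check) to make that computation go through. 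The rest --- the invertibility checks via Proposition~\ref{proposition:invertible-prime}, the non-invertibility of $I_0$, and $I_0^2=I_0$ from the cubic relation --- matches the paper's argument.
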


\begin{table}[htb!]
\centering
\begin{tabular}{c|c|c|c|c|c|c|c}
      & $I_0$ & $I_1$ & $I_2$ & $I_3$ & $I_4$ & $I_5$ & $I_6$ \\ \hline
$I_0$ & $I_0$ & $I_0$ & $I_0$ & $I_0$ & $I_0$ & $I_0$ & $I_0$ \\ \hline
$I_1$ & $I_0$ & $I_2$ & $I_3$ & $I_4$ & $I_5$ & $I_6$ & $I_1$ \\ \hline
$I_2$ & $I_0$ & $I_3$ & $I_4$ & $I_5$ & $I_6$ & $I_1$ & $I_2$ \\ \hline
$I_3$ & $I_0$ & $I_4$ & $I_5$ & $I_6$ & $I_1$ & $I_2$ & $I_3$ \\ \hline
$I_4$ & $I_0$ & $I_5$ & $I_6$ & $I_1$ & $I_2$ & $I_3$ & $I_4$ \\ \hline
$I_5$ & $I_0$ & $I_6$ & $I_1$ & $I_2$ & $I_3$ & $I_4$ & $I_5$ \\ \hline
$I_6$ & $I_0$ & $I_1$ & $I_2$ & $I_3$  & $I_4$ & $I_5$ & $I_6$ \\ 
\end{tabular}
\caption{The multiplication table of $C(\Z[\Theta_{27}])$.}
\label{table:multiplicationtable}
\end{table}

\begin{proof}In Section~\ref{subsection:27MAGMA}, we observe that $\operatorname{Pic}(\Z[\Theta_{27}])$ consists of $I_1,I_2,\ldots,I_6$, and the multiplication is given by $I_i\cdot I_j=I_{i+j}$ for all $1\leq i,j\leq 6$ where subscripts are understood modulo $6$. It suffices to analyze equivalence classes of non-invertible ideals of $\Z[\Theta_{27}]$. By Proposition~\ref{proposition:27}, every non-zero, non-invertible ideal of $\Z[\Theta_{27}]$ is equivalent to $\langle\Theta_{27}-2,7\rangle\cdot J$ for some invertible ideal $J$. Let $I_0$ be the equivalence class of the non-invertible ideal $\langle \Theta_{27}-2,7\rangle$. 

Since $\operatorname{Pic}(\Z[\Theta_{27}])$ consists of $I_i$ for $i=1,\ldots,6$, the equivalence class of $J$ is $I_i$ for some $1\leq i\leq 6$. In Section~\ref{subsection:27MAGMA}, we observe the following.
\begin{enumerate}
\item For $i=1,\ldots,6$, each $I_i$ is represented by the ideal $\langle \Theta_{27}-2-7k,49\rangle$ for some $k=0,1,3,4,5,6$.
\item $\langle \Theta_{27}-2,49\rangle$ is a principal ideal. 
\item For any $k=1,3,4,5,6$, $\langle \Theta_{27}-2,7\rangle\cdot\langle\Theta_{27}-2,49\rangle=\langle \Theta_{27}-2,7\rangle\cdot\langle\Theta_{27}-2-7k,49\rangle$.
\end{enumerate}
Since $\langle \Theta_{27}-2,49\rangle$ is a principal ideal and $\langle\Theta_{27}-2,7\rangle$ represents $I_0$, these observations imply that 
\[I_0\cdot I_i=I_i\cdot I_0=I_0\]
for any $i=1,\ldots,6$. To obtain Table~\ref{table:multiplicationtable}, it remains to show that $I_0\cdot I_0=I_0$. For this, we show that 
\[\langle \Theta_{27}-2,7\rangle\cdot \langle \Theta_{27}-2,7\rangle\approx \langle \Theta_{27}-2,7\rangle.\]
Recall that $f_{27}(x)=x^3-27x^2+26x-1=(x-2)^3-7\big(3(x-2)^2+10(x-2)+7\big)$. It follows that $(\Theta_{27}-2)^3=7\big(3(\Theta_{27}-2)^2+10(\Theta_{27}-2)+7\big)$. Then we have 
\begin{align*}
\langle \Theta_{27}-2,7\rangle &\approx \langle (\Theta_{27}-2)^3,7(\Theta_{27}-2)^2\rangle \\
&=\langle 7\big(3(\Theta_{27}-2)^2+10(\Theta_{27}-2)+7\big), 7(\Theta_{27}-2)^2\rangle.
\end{align*}
On the other hand, we have
\begin{align*}\langle \Theta_{27}-2,7\rangle\cdot \langle \Theta_{27}-2,7\rangle&=\langle (\Theta_{27}-2)^2, 7(\Theta_{27}-2),49\rangle\\
&\approx \langle (\Theta_{27}-2)^3,7(\Theta_{27}-2)^2,49(\Theta_{27}-2)\rangle\\
&= \langle 7\big(3(\Theta_{27}-2)^2+10(\Theta_{27}-2)+7\big),7(\Theta_{27}-2)^2,49(\Theta_{27}-2)\rangle.
\end{align*}
Note that 
\begin{align*}49(\Theta_{27}-2)&=7\big(3(\Theta_{27}-2)^2+10(\Theta_{27}-2)+7\big)(\Theta_{27}-2)-7\big(3(\Theta_{27}-2)+10\big)(\Theta_{27}-2)^2\\
&\in \langle 7\big(3(\Theta_{27}-2)^2+10(\Theta_{27}-2)+7\big),7(\Theta_{27}-2)^2\rangle.\end{align*}
It follows that 
\[\langle \Theta_{27}-2,7\rangle\cdot \langle \Theta_{27}-2,7\rangle\approx \langle 7\big(3(\Theta_{27}-2)^2+10(\Theta_{27}-2)+7\big),7(\Theta_{27}-2)^2\rangle\approx \langle \Theta_{27}-2,7\rangle,\]
and this completes the proof.
\end{proof}

	\section{Finding representatives of elements in $\operatorname{Pic}(\Z[\Theta_{n}])$}\label{section:findingrepresentatives}
	In this section, we use MAGMA to find representatives of elements of $\operatorname{Pic}(\Z[\Theta_n])$. 
	
	\begin{definition}Let $x$ be an element of $C(\Z[\Theta_n])$. We say a tuple $(c,d,n)\in \mathcal{CS}$ is a \emph{representative} of $x$ if the integers $c$ and $d$ satisfy $1\leq c\leq d$ and $x=[\langle \Theta_n-c,d\rangle]$. (Recall that $(c,d,n)\in \mathcal{CS}$ if and only if $f_n(c)\equiv 0 \Mod{d}$.) We say a representative $(c,d,n)$ of $x$ is \emph{minimal} if $(c',d',n)$ is another representative of $x$, then either $d'>d$ or $d'=d$ and $c'>c$.
	\end{definition} 
	\begin{remark}Since $\langle \Theta_n-1,1\rangle$ is principal, the minimal representative of the trivial element in $C(\Z[\Theta_n])$ is $(1,1,n)$.  Every element $x$ of $C(\Z[\Theta_n])$ has a representative by Proposition~\ref{proposition:Aitchison-Rubinstein}. The minimal representative of $x$ is  minimal with respect to the colexicographic order on the set of representatives of $x$. Each tuple $(c,d,n)$ corresponds to the standard \CS matrix $X_{c,d,n}$.
			\end{remark}

	\subsection{The cases that $\Z[\Theta_n]$ is a Dedekind domain}
	In this subsection, we assume that $\Z[\Theta_n]$ is a Dedekind domain, that is, $\operatorname{Pic}(\Z[\Theta_n])=C(\Z[\Theta_n])$ is a group. We give two pseudocodes
 each of which computes the following:
  \begin{enumerate}
 \item The list of minimal representatives $(c,d,n)\in \mathcal{CS}$ such that $d\leq N$ for any given integers $N>0$ and $n$ such that $\Z[\Theta_n]$ is a Dedekind domain. 
\item The representatives $(c,d,n)$ of $x$ such that $d\leq N$ for a given representative $(c_0,d_0,n)$ of an element $x$ of $C(\Z[\Theta_n])$ and a given integer $N>0$. (Equivalently, for a given standard \CS matrix $X_{c_0,d_0,n}$ and an integer $N>0$, the pseudocode computes the set of standard \CS matrices $X_{c,d,n}$ such that $d\leq N$.)
\end{enumerate}

\begin{algorithm}[htb!]
\caption{Finding minimal representatives of $C(\Z[\Theta_n])$ when $\Z[\Theta_n]$ is a Dedekind domain}
\label{algorithm:1}
\begin{algorithmic}[1]
\State $i=1;$
\While{$ i<\# C(\Z[\Theta_n])$}
\For{$1\leq d<N$}
\For{$1\leq c\leq d$}
\If{$f_n(c)\equiv 0\Mod d$ and $\langle \Theta_n-c,d\rangle$ is not a principal ideal}
\If{$i>1$ and $[\langle\Theta_n-c_i,d_i\rangle]\neq [\langle \Theta_n-c_j,d_j\rangle]$ for any $1\leq j<i$}\State let $(c_i,d_i)=(c,d)$ and $i=i+1$;
\EndIf

\If{$i=1$}
\State let $(c_i,d_i)=(c,d)$ and $i=i+1$;
\EndIf
\EndIf
\EndFor
\EndFor
\EndWhile
\State print $(1,1,n)$;
\For{$1\leq i<\# C(\Z[\Theta_n])$} print $(c_i,d_i,n)$;
\EndFor
\end{algorithmic}
\end{algorithm} 
\begin{algorithm}[htb!]
\caption{Finding other representatives of $x$ when $\Z[\Theta_n]$ is a Dedekind domain}
\label{algorithm:2}
\begin{algorithmic}[1]
\For{$1\leq d<N$}
\For{$1\leq c\leq d$}
\If{$f_n(c)\equiv 0\Mod {d}$ and $[\langle\Theta_n-c,d\rangle]=[\langle \Theta_n-c_0,d_0\rangle]$}
 print $(c,d,n)$;
\EndIf
\EndFor
\EndFor
\end{algorithmic}
\end{algorithm}

We give the corresponding MAGMA codes in Section~\ref{subsection:MAGMAcodes}, and these MAGMA codes will be used in the proof of Theorem~\ref{theorem:B} given in Section~\ref{section:theoremB}.

\subsection{MAGMA codes}\label{subsection:MAGMAcodes}
	In this subsection, we give MAGMA codes. One can execute the codes by pasting them to the online MAGMA calculator \url{http://magma.maths.usyd.edu.au/calc/}.
We first give a MAGMA code for Algorithm~\ref{algorithm:1}.
	\begin{Verbatim}
n := 69; 
N := 400; 
R<x> := PolynomialRing(Integers());
K<Theta> := NumberField(x^3-n*x^2+(n-1)*x-1);
O := EquationOrder(K);
f := x^3-n*x^2+(n-1)*x-1;
C := Order(RingClassGroup(O));       
X := ZeroMatrix(IntegerRing(), 2, C); 
i := 1;
for d in [1 .. N] do
	for c in [1 .. d] do
		if i eq C then break; end if;
		k := Evaluate(f, c);
		I := ideal< O | Theta-c, d >;
		if IsDivisibleBy(k, d) eq true and IsPrincipal(I) ne true then 
			if i ne 1 then
				IsSame := false;
				for j in [1 .. (i-1)] do 
					if  ClassRepresentative(ideal< O | Theta-c, d >) eq 
					ClassRepresentative(ideal< O | Theta-X[1][j], X[2][j]>) then 
						IsSame := true;  break; 
					end if;
				end for;
				if IsSame eq false then 
					X[1][i] := c; X[2][i] := d; i+:=1;
				end if;
			end if;
			if i eq 1 then
				X[1][i] := c; X[2][i] := d; i+:=1;
			end if;
		end if;
	end for;
end for; 
"There are", C, "similarity classes of trace",n,"Cappell-Shaneson matrices.";
print [1,1,n];
for i in [1 .. C-1] 
	do print [X[1][i],X[2][i],n];
end for;
\end{Verbatim}
Now we give a MAGMA code for Algorithm~\ref{algorithm:2}. 
\begin{Verbatim}
n := 70; 
c0 := 110; 
d0 := 189; 
N := 300;    
R<x> := PolynomialRing(Integers());
K<theta> := NumberField(x^3-n*x^2+(n-1)*x-1);
O := EquationOrder(K);
f := x^3-n*x^2+(n-1)*x-1;
C := Order(RingClassGroup(O));  
for d in [1.. N] do
	for c in [1 .. d] do
		if IsDivisibleBy(Evaluate(f,c),d) eq true and 
		ClassRepresentative(ideal< O | theta-c,d >) eq 
		ClassRepresentative(ideal< O | theta-c0,d0>) eq true 
		then [c,d,n]; 
		end if;  
	end for;
end for;
\end{Verbatim}
\subsection{Representatives of elements of $\operatorname{Pic}(\Z[\Theta_{27}])$}\label{subsection:27MAGMA}
From the MAGMA code given below, $\operatorname{Pic}(\Z[\Theta_{27}])\cong \Z_6$ with a generator is represented by the ideal 
\[I=\langle 1+18601\Theta_{27}^2,\Theta_{27}+3672\Theta_{27}^2,26737\Theta_{27}^2\rangle.\]
We check that $I^5\cdot \langle \Theta_{27}-7,17\rangle$ and $\langle \Theta_{27}-23,49\rangle\cdot\langle \Theta_{27}-7,17\rangle$
are principal ideals. Since $I^6$ is a principal ideal, we can conclude that $I\approx \langle \Theta_{27}-7,17\rangle$ and $I^5\approx \langle \Theta_{27}-23,49\rangle$. Similarly, we have
\begin{align*}
I&\approx\langle \Theta_{27}-7,17\rangle\approx \langle \Theta_{27}-44,49\rangle,\\
 I^2&\approx \langle\Theta_{27}-4,5\rangle\approx \langle \Theta_{27}-9, 49\rangle,\\
I^3&\approx \langle \Theta_{27}-11,13\rangle\approx \langle \Theta_{27}-30, 49\rangle,\\
I^4&\approx \langle \Theta_{27}-10,11\rangle\approx \langle \Theta_{27}-37,49\rangle,\\
I^5&\approx \langle \Theta_{27}-14,19\rangle\approx \langle \Theta_{27}-23,49\rangle,\\
I^6&\approx \langle \Theta_{27}-1,1\rangle\approx \langle \Theta_{27}-2,49\rangle.
\end{align*}
Consequently, the elements of $\operatorname{Pic}(\Z[\Theta_{27}])\cong \Z_6$ have representatives 
\[(1,1,27), (4,5,27), (10,11,27), (11,13,27), (7,17,27), (14,19,27).\]
 Note that these representatives are actually in $\mathcal{CS}$ since 
\begin{align*}
f_{27}(1)&=1^3-27\cdot 1^2+26\cdot 1-1=-1\equiv 0\Mod{1},\\
f_{27}(4)&=4^3-27\cdot 4^2+26\cdot 4-1=-265\equiv 0\Mod{5},\\
f_{27}(10)&=10^3-27\cdot 10^2+26\cdot 10-1=-1441\equiv 0\Mod{11},\\
f_{27}(11)&=11^3-27\cdot 11^2+26\cdot 11-1=-1651\equiv 0\Mod {13},\\
f_{27}(7)&=7^3-27\cdot 7^2+26\cdot 7-1=-799\equiv 0\Mod {17},\\
f_{27}(14)&=14^3-27\cdot 14^2+26\cdot 14-1=-2185\equiv 0\Mod{19}.
\end{align*}
To prove Theorem~\ref{theorem:idealclassmonoid27}, for $k=1,3,4,5,6$, we also observe that
\[\langle \Theta_{27}-2,7\rangle\cdot \langle \Theta_{27}-2,49\rangle=\langle \Theta_{27}-2,7\rangle\cdot \langle \Theta_{27}-2-7k,49\rangle\]
Here is the MAGMA code used in above.
\begin{Verbatim}
R<x> := PolynomialRing(Integers());
f := x^3-27*x^2+26*x-1;
K<theta> := NumberField(f);
O := EquationOrder(K);
C,g := RingClassGroup(O);
I := g(C.1);
C; 
g; 
I;
IsPrincipal(I^5*ideal<O | theta-7,17>);
IsPrincipal(ideal<O | theta-23,49>*ideal<O | theta-7,17>);
IsPrincipal(I^4*ideal<O | theta-4,5>);
IsPrincipal(ideal<O | theta-37,49>*ideal<O | theta-4,5>);
IsPrincipal(I^3*ideal<O | theta-11,13>);
IsPrincipal(ideal<O | theta-30,49>*ideal<O | theta-11,13>);
IsPrincipal(I^2*ideal<O | theta-10,11>);
IsPrincipal(ideal<O | theta-9,49>*ideal<O | theta-10,11>);
IsPrincipal(I*ideal<O | theta-14,19>);
IsPrincipal(ideal<O | theta-44,49>*ideal<O | theta-14,19>);
IsPrincipal(ideal<O | theta-2,49>);
ideal<O | theta-2,7>*ideal<O | theta-2,49>;
ideal<O | theta-2,7>*ideal<O | theta-9,49>;
ideal<O | theta-2,7>*ideal<O | theta-23,49>;
ideal<O | theta-2,7>*ideal<O | theta-30,49>;
ideal<O | theta-2,7>*ideal<O | theta-37,49>;
ideal<O | theta-2,7>*ideal<O | theta-44,49>;
\end{Verbatim}	

\clearpage
\subsection{Representatives of elements of $C(\Z[\Theta_n])$}
\begin{table}[htb!]
\begin{center}
\begin{tabular}{ccl}
$n$ & $\# C(\Z[\Theta_n])$ & Representatives of elements of $C(\Z[\Theta_n])$ \\ \hline
3 & 1 & (1,1,3)\\
4 & 1 & (1,1,4)\\
5 & 1 & (1,1,5)\\
6 & 1 & (1,1,6)\\
7 & 1 & (1,1,7)\\
8 & 1 & (1,1,8)\\
9 & 1 & (1,1,9)\\
10 & 2 & (1,1,10), (2,3,10) \\
11 & 1 & (1,1,11)\\
12 & 2 & (1,1,12), (4,5,12) \\
13 & 3 & (1,1,13), (2,3,13), (3,5,13) \\
14 & 2 & (1,1,14), (5,7,14)  \\
15 & 2 & (1,1,15), (2,5,15)  \\
16 & 3 & (1,1,16), (2,3,16), (4,7,16)  \\
17 & 3 & (1,1,17), (4,5,17), (6,7,17) \\
18& 2& (1,1,18), (3,5,18)  \\
19& 6& (1,1,19), (2,3,19), (3,7,19), (5,9,19), (8,11,19),\\
&& (9,11,19)\\
20& 3& (1,1,20), (2,5,20), (2,7,20)\\
21&3& (1,1,21), (5,7,21), (9,13,21)\\
22&6& (1,1,22), (2,3,22), (4,5,22), (8,9,22), (6,11,22),\\
&& (14,17,22)\\
23 & 5&(1,1,23), (3,5,23), (4,7,23), (5,11,23), (6,13,23)\\
24 & 4&(1,1,24), (6,7,24), (3,11,24), (17,23,24)\\
25 & 9 &(1,1,25), (2,3,25), (2,5,25), (2,9,25), (7,11,25),\\
&& (7,13,25), (8,13,25), (10,13,25), (13,17,25)\\
26&4&(1,1,26), (3,7,26), (4,11,26), (12,17,26)\\
27&7&(1,1,27), (4,5,27), (2,7,27), (10,11,27), (11,13,27),\\
&& (7,17,27), (14,19,27)\\
28&10&(1,1,28), (2,3,28), (3,5,28), (5,7,28), (5,9,28),\\
&& (8,15,28), (13,19,28), (16,19,28), (19,23,28), (23,27,28)\\
29&4&(1,1,29), (4,17,29), (8,19,29), (27,37,29)\\
30&8&(1,1,30), (2,5,30), (4,7,30), (2,11,30), (8,11,30),\\
&& (9,11,30), (5,13,30), (15,17,30)\\
31&7&(1,1,31), (2,3,31), (6,7,31), (8,9,31), (9,17,31),\\
&& (11,17,31), (15,23,31)\\
32&6&(1,1,32), (4,5,32), (3,13,32), (4,13,32), (12,13,32),\\
&&(18,23,32)\\
33&7&(1,1,33), (3,5,33), (3,7,33), (6,11,33), (12,19,33),\\
&&(16,23,33), (35,43,33)\\

34&12&(1,1,34), (2,3,34), (2,7,34), (2,9,34), (5,11,34),\\
 &&(9,13,34),(5,17,34), (9,19,34), (10,19,34), (15,19,34),\\
 && (20,27,34), (26,41,34)\\
35&10&(1,1,35), (2,5,35), (5,7,35), (3,11,35), (2,13,35),\\
&& (3,17,35), (4,19,35), (17,25,35), (13,29,35), (17,37,35)\\
36&5&(1,1,36), (7,11,36), (6,13,36), (8,17,36), (11,19,36)\\

\end{tabular}
\caption{Representatives of elements of $C(\Z[\Theta_n])$ for $3\leq n\leq 36$.}
\label{table:3<=n<=36}
\end{center}
\end{table}

\begin{table}
\begin{center}
\begin{tabular}{ccl}

$n$ & $\# C(\Z[\Theta_n])$ & Representatives of elements of $C(\Z[\Theta_n])$ \\ \hline
37&15&(1,1,37), (2,3,37), (4,5,37), (4,7,37), (5,9,37),\\
&& (4,11,37), (14,15,37), (6,17,37), (11,21,37), (5,23,37),\\
&& (7,23,37), (5,27,37), (26,33,37), (14,45,37), (23,51,37) \\
38 & 12&(1,1,38), (3,5,38), (6,7,38), (10,11,38), (7,13,38),\\
&& (8,13,38), (10,13,38), (10,17,38), (6,19,38), (8,25,38),\\
&& (22,29,38), (43,55,38)\\
39&6&(1,1,39), (14,17,39), (17,29,39), (25,29,39), (26,29,39),\\
&& (21,37,39)\\
40&16&(1,1,40), (2,3,40), (2,5,40), (3,7,40), (8,9,40),\\
&& (11,13,40), (2,15,40), (17,19,40), (17,21,40), (12,23,40),\\
&& (20,29,40), (12,31,40), (25,31,40), (14,37,40), (30,41,40),\\
&& (17,57,40)\\
41&9&(1,1,41), (2,7,41), (2,11,41), (8,11,41), (9,11,41),\\
&& (7,19,41), (21,23,41), (9,29,41), (28,43,41)\\
42&10&(1,1,42), (4,5,42), (5,7,42), (13,17,42), (16,17,42),\\
&& (20,23,42), (19,25,42), (21,31,42), (19,43,42), (31,71,42)\\
43&16&(1,1,43), (2,3,43), (3,5,43), (2,9,43), (5,13,43),\\
&& (8,15,43), (12,17,43), (5,19,43), (6,23,43), (13,25,43),\\
&& (10,43,43), (38,45,43), (29,51,43), (15,53,43), (54,61,43),\\
&& (10,67,43)\\
44&9&(1,1,44), (4,7,44), (6,11,44), (7,17,44), (23,29,44),\\
&& (9,31,44), (13,31,44), (22,31,44), (8,37,44)\\
45&14&(1,1,45), (2,5,45), (6,7,45), (5,11,45), (3,13,45),\\
&& (4,13,45), (12,13,45), (2,17,45), (3,19,45), (12,25,45),\\
&& (27,31,45), (27,35,45), (42,53,45), (24,61,45)\\
46&12&(1,1,46), (2,3,46), (5,9,46), (3,11,46), (4,17,46),\\
&&(14,19,46), (3,23,46), (10,23,46), (14,27,46), (4,29,46),\\
&& (11,29,46), (14,33,46)\\
47&16&(1,1,47), (4,5,47), (3,7,47), (7,11,47), (9,13,47),\\
&& (15,17,47), (13,19,47), (16,19,47), (18,19,47), (17,23,47),\\
&& (14,25,47), (28,31,47), (24,35,47), (18,41,47), (32,43,47),\\
&& (39,83,47)\\
48&18&(1,1,48), (3,5,48), (2,7,48), (4,11,48), (2,13,48),\\
&& (9,17,48), (11,17,48), (8,19,48), (8,23,48), (9,23,48),\\
&& (18,25,48), (5,29,48), (20,31,48), (23,35,48), (25,43,48),\\
&& (15,47,48), (54,67,48), (39,71,48)\\
49&20&(1,1,49), (2,3,49), (5,7,49), (8,9,49), (10,11,49),\\
&& (6,13,49), (5,21,49), (4,23,49), (11,23,49), (8,27,49),\\
&& (15,29,49), (25,37,49), (29,37,49), (32,37,49), (32,39,49),\\
&& (13,43,49), (38,43,49), (50,69,49), (33,73,49), (41,89,49)\\
50&12&(1,1,50), (2,5,50), (2,19,50), (14,23,50), (22,25,50),\\
&&(19,31,50), (11,37,50), (15,37,50), (24,37,50), (13,41,50),\\
&& (9,43,50), (48,61,50) \\

\end{tabular}
\caption{Representatives of elements of $C(\Z[\Theta_n])$ for $37\leq n\leq 50$.}
\label{table:37<=n<=50}
\end{center}
\end{table}

\clearpage
\begin{table}[htb]
\begin{center}
\begin{tabular}{ccl}
$n$ & $\# C(\Z[\Theta_n])$ & Representatives of elements of $C(\Z[\Theta_n])$ \\ \hline

51&13&(1,1,51), (4,7,51), (7,13,51), (8,13,51), (10,13,51),\\
&& (5,17,51), (13,23,51), (19,23,51), (12,29,51), (24,31,51),\\
&&(31,41,51), (38,47,51), (43,47,51)\\
52 & 28& (1,1,52), (2,3,52), (4,5,52), (6,7,52), (2,9,52),\\
&&(2,11,52), (8,11,52), (9,11,52), (14,15,52), (3,17,52),\\
&&(12,19,52), (20,21,52), (9,25,52), (11,27,52), (27,29,52),\\
&&(16,31,52), (8,33,52), (20,33,52), (36,41,52), (29,45,52),\\
&& (20,51,52), (24,55,52), (20,63,52), (41,71,52), (59,73,52),\\
&& (59,75,52), (56,87,52), (32,103,52)\\
53 &15 & (1,1,53), (3,5,53), (11,13,53), (8,17,53), (9,19,53),\\
&& (10,19,53), (15,19,53), (23,25,53), (8,29,53), (21,29,53),\\
&& (24,29,53), (7,31,53), (24,41,53), (37,53,53), (45,83,53)\\
54&12 & (1,1,54), (3,7,54), (6,17,54), (4,19,54), (15,23,54),\\
&& (19,29,54), (4,31,54), (5,31,54), (14,31,54), (31,37,54),\\
&& (28,41,54), (25,53,54)\\
55&27& (1,1,55), (2,3,55), (2,5,55), (2,7,55), (5,9,55),\\
&& (6,11,55), (2,15,55), (10,17,55), (11,19,55), (2,21,55),\\
&& (18,23,55), (7,25,55), (23,27,55), (17,33,55), (39,43,55),\\
&& (32,45,55), (39,47,55), (44,51,55), (44,53,55), (17,55,55),\\
&& (11,57,55), (17,61,55), (29,67,55), (41,69,55), (20,73,55),\\
&& (32,75,55), (27,85,55)\\

56&15&(1,1,56), (5,7,56), (5,11,56), (5,13,56), (14,17,56),\\
&& (16,23,56), (15,31,56), (16,37,56), (38,41,56), (29,43,56),\\
&& (11,47,56), (20,47,56), (73,89,56), (75,101,56), (78,107,56)\\
57&16&(1,1,57), (4,5,57), (3,11,57), (6,19,57), (22,23,57),\\
&& (4,25,57), (3,29,57), (7,29,57), (18,29,57), (12,37,57),\\
&& (10,41,57), (43,53,57), (41,73,57), (15,79,57), (25,89,57),\\
&& (15,109,57)\\
58&36&(1,1,58), (2,3,58), (3,5,58), (4,7,58), (8,9,58),\\
&& (7,11,58), (3,13,58), (4,13,58), (12,13,58), (8,15,58),\\
&& (11,21,58), (3,25,58), (17,27,58), (17,31,58), (18,31,58),\\
&& (23,31,58), (29,33,58), (18,35,58), (23,37,58), (33,37,58),\\
&& (17,39,58), (29,39,58), (19,41,58), (20,43,58), (8,45,58),\\
&& (36,47,58), (29,53,58), (8,61,58), (53,63,58), (53,75,58),\\
&& (56,79,58), (25,91,58), (20,109,58), (107,117,58), (101,123,58),\\
&& (83,141,58)\\
59&14&(1,1,59), (6,7,59), (4,11,59), (13,17,59), (16,17,59),\\
&& (17,19,59), (10,29,59), (26,31,59), (28,37,59), (41,49,59),\\
&&(42,59,59), (26,61,59), (38,61,59), (55,67,59)\\
60&16&(1,1,60), (2,5,60), (10,11,60), (9,13,60), (12,17,60),\\
&& (7,19,60), (2,23,60), (5,23,60), (7,23,60), (17,25,60),\\
&& (6,37,60), (6,43,60), (5,47,60), (41,53,60), (32,55,60),\\
&& (48,83,60)\\
\end{tabular}
\caption{Representatives of elements of $C(\Z[\Theta_n])$ for $51\leq n\leq 60$.}
\label{table:51<=n<=60}
\end{center}
\end{table}

\clearpage

\begin{table}[htb]
\begin{center}
\begin{tabular}{ccl}
$n$ & $\# C(\Z[\Theta_n])$ & Representatives of elements of $C(\Z[\Theta_n])$ \\ \hline

61&21&(1,1,61), (2,3,61), (3,7,61), (2,9,61), (2,13,61),\\
&& (7,17,61), (17,21,61), (20,27,61), (19,37,61), (30,43,61),\\
&& (32,47,61), (35,47,61), (41,47,61), (41,51,61), (51,59,61),\\
&& (53,59,61), (46,73,61), (23,79,61), (80,91,61), (85,103,61),\\
&& (26,139,61)\\
62&18&(1,1,62), (4,5,62), (2,7,62), (6,13,62), (2,17,62),\\
&& (5,19,62), (24,25,62), (14,29,62), (9,35,62), (22,37,62),\\
&& (23,41,62), (24,43,62), (21,53,62), (14,59,62), (28,61,62),\\
&& (19,65,62), (55,71,62), (63,73,62)\\
63&24&(1,1,63), (3,5,63), (5,7,63), (2,11,63), (8,11,63),\\
&& (9,11,63), (4,17,63), (12,23,63), (8,25,63), (10,31,63),\\
&& (11,31,63), (33,35,63), (4,41,63), (6,41,63), (12,41,63),\\
&& (33,49,63), (13,55,63), (60,73,63), (68,77,63), (74,83,63),\\
&& (28,97,63), (60,97,63), (72,97,63), (79,107,63)\\
64&30&(1,1,64), (2,3,64), (5,9,64), (7,13,64), (8,13,64),\\
&& (10,13,64), (15,17,64), (3,19,64), (21,23,64), (5,27,64),\\
&& (6,29,64), (13,29,64), (16,29,64), (8,39,64), (20,39,64),\\
&& (23,39,64), (5,43,64), (17,43,64), (18,47,64), (28,47,64),\\
&& (32,51,64), (40,53,64), (41,57,64), (37,59,64), (32,67,64),\\
&& (51,71,64), (74,87,64), (62,97,64), (75,109,64), (146,159,64)\\
65&21&(1,1,65), (2,5,65), (4,7,65), (9,17,65), (11,17,65),\\
&& (14,19,65), (20,23,65), (2,25,65), (32,35,65), (15,41,65),\\
&& (16,41,65), (34,41,65), (31,47,65), (11,49,65), (53,61,65),\\
&& (39,67,65), (52,67,65), (61,79,65), (58,83,65), (62,89,65),\\
&& (90,113,65)\\
66&20&(1,1,66), (6,7,66), (6,11,66), (11,13,66), (13,19,66),\\
&& (16,19,66), (18,19,66), (6,23,66), (8,31,66), (29,31,66),\\
&& (9,37,66), (27,37,66), (30,37,66), (29,41,66), (40,43,66),\\
&& (33,47,66), (20,49,66), (23,53,66), (68,79,66), (64,109,66)\\
67&28&(1,1,67), (2,3,67), (4,5,67), (8,9,67), (5,11,67),\\
&& (14,15,67), (8,19,67), (19,25,67), (26,27,67), (22,29,67),\\
&& (5,33,67), (5,37,67), (7,37,67), (18,37,67), (21,41,67),\\
&& (31,43,67), (34,43,67), (40,47,67), (39,53,67), (49,55,67),\\
&& (44,75,67), (26,81,67), (78,97,67), (71,99,67), (44,111,67),\\
&& (92,111,67), (41,173,67), (50,179,67)\\
68&24&(1,1,68), (3,5,68), (3,7,68), (3,11,68), (5,17,68),\\
&& (13,25,68), (17,29,68), (25,29,68), (26,29,68), (3,35,68),\\
&& (33,43,68), (45,49,68), (12,53,68), (20,53,68), (36,53,68),\\
&& (23,61,68), (34,61,68), (15,67,68), (36,67,68), (23,73,68),\\
&& (25,79,68), (37,89,68), (80,97,68), (126,197,68)\\
69&18&(1,1,69), (2,7,69), (7,11,69), (5,13,69), (3,17,69),\\
&& (2,19,69), (3,23,69), (10,23,69), (20,29,69), (20,37,69),\\
&& (22,53,69), (36,61,69), (49,67,69), (32,71,69), (57,73,69),\\
&& (24,107,69), (60,127,69), (80,181,69)\\

\end{tabular}

\caption{Representatives of elements of $C(\Z[\Theta_n])$ for $61\leq n\leq 69$.}
\label{table:61<=n<=69}
\end{center}
\end{table}
	\section{Even more Cappell-Shaneson spheres are standard}\label{section:theoremB}
	The goal of this section is to prove Theorem~\ref{theorem:B} and Corollary~\ref{corollary:D}. 
	\subsection{Proof of Theorem~\ref{theorem:B}}\label{subsection:proofofTheoremB}
	The statement of Theorem~\ref{theorem:B} is that Conjecture~\ref{conjecture:Gompf} is true for trace $n$ if $n$ is an integer such that $-64\leq n\leq 69$. By Theorem~\ref{theorem:A}, it suffices to check that Conjecture~\ref{conjecture:Gompf} is true for trace $n$ where $3\leq n\leq 69$. We will use the reformulation of Conjecture~\ref{conjecture:Gompf} and the notations given in Section~\ref{subsection:Gompfequivalence}. To simplify the proof, we first prove a lemma.
	
	\begin{lemma}\label{lemma:induction}Let $n> 3$ be an integer. Suppose that Conjecture~\ref{conjecture:Gompf} is true for trace $m$ if $3\leq m\leq n-1$. If every element of $C(\Z[\Theta_n])$ has a representative $(c,d,n)$ such that $n\equiv n_0\Mod {d}$ for some $6-n\leq n_0\leq n-1$, then Conjecture~\ref{conjecture:Gompf} is true for trace $n$.
	\end{lemma}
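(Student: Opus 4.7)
The plan is to take an arbitrary standard \CS matrix $X_{c,d,n}$ with trace $n$ (equivalently, an arbitrary tuple $(c,d,n)\in\mathcal{CS}$) and show that $(c,d,n)\sim(1,1,2)$. By Proposition~\ref{proposition:Aitchison-Rubinstein}, the relation $\sim_S$ on tuples with fixed trace $n$ exactly detects equality in $C(\Z[\Theta_n])$. Hence the hypothesis of the lemma furnishes a tuple $(c_0,d_0,n)$ with $(c,d,n)\sim_S (c_0,d_0,n)$ and $n\equiv n_0\Mod{d_0}$ for some $n_0$ satisfying $6-n\leq n_0\leq n-1$.

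Next I would apply a single $\sim_G$-shift to lower the trace from $n$ to $n_0$. A direct computation yields
\[
f_{n_0}(x)-f_n(x)=(n-n_0)\,x(x-1),
\]
so $f_n(c_0)\equiv 0\Mod{d_0}$ together with $d_0\mid (n-n_0)$ implies $f_{n_0}(c_0)\equiv 0\Mod{d_0}$. Thus $(c_0,d_0,n_0)\in\mathcal{CS}$, and writing $n=n_0+kd_0$ for some $k\in\Z$ we obtain $(c_0,d_0,n)\sim_G(c_0,d_0,n_0)$ directly from the definition of $\sim_G$.

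It then remains to prove $(c_0,d_0,n_0)\sim(1,1,2)$, i.e.\ that Conjecture~\ref{conjecture:reformulation} holds for trace $n_0$. If $3\leq n_0\leq n-1$, this is precisely the inductive hypothesis. Otherwise $6-n\leq n_0\leq 2$, in which case $3\leq 5-n_0\leq n-1$, so the inductive hypothesis applies to trace $5-n_0$ and Theorem~\ref{theorem:A} transports the statement back to trace $n_0$. Concatenating the equivalences $(c,d,n)\sim_S(c_0,d_0,n)\sim_G(c_0,d_0,n_0)\sim(1,1,2)$ completes the argument.

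The lemma is essentially a bookkeeping device rather than a substantive result, and I do not anticipate any serious obstacle. Its value is to isolate precisely what must be checked trace-by-trace in order to run induction: each element of $C(\Z[\Theta_n])$ should admit a representative $(c_0,d_0,n)$ whose residue $n\Mod{d_0}$ falls in the window $[6-n,n-1]$, so that either the inductive hypothesis or the $n\leftrightarrow 5-n$ symmetry of Theorem~\ref{theorem:A} becomes available. The only point requiring care is that the shifted tuple $(c_0,d_0,n_0)$ really lies in $\mathcal{CS}$, which, as shown above, follows at once from the identity $f_{n_0}-f_n=(n-n_0)x(x-1)$.
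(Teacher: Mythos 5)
Your proof is correct and follows essentially the same route as the paper: extend the inductive hypothesis to the window $6-n\leq m\leq n-1$ via Theorem~\ref{theorem:A}, pass to the given representative by $\sim_S$, shift the trace down to $n_0$ by $\sim_G$, and invoke the (extended) hypothesis at trace $n_0$. Your explicit check that $(c_0,d_0,n_0)\in\mathcal{CS}$ via the identity $f_{n_0}(x)-f_n(x)=(n-n_0)x(x-1)$ is a detail the paper leaves implicit, but it does not change the argument.
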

	\begin{proof}By Theorem~\ref{theorem:A} and the hypothesis, Conjecture~\ref{conjecture:Gompf} is true for trace $m$ if $6-n\leq m\leq n-1$. Let $x$ be an element of $C(\Z[\Theta_n])$, and $(c,d,n)$ be a representative of $x$ satisfying that $n= n_0+kd$ for some $6-n\leq n_0\leq n-1$ and $k\in \Z$. Then we have $(c,d,n)\sim_G (c,d,n_0)$ because $n=n_0+kd$. Since Conjecture~\ref{conjecture:Gompf} is true for trace $n_0$, $(c,d,n_0)\sim (1,1,2)$. It follows that 
	\[(c,d,n)\sim_G (c,d,n_0)\sim (1,1,2).\]
Therefore Conjecture~\ref{conjecture:Gompf} is true for trace $n$.
		\end{proof}
	\begin{proof}[Proof of Theorem~\ref{theorem:B}] In Table~\ref{table:3<=n<=36}, we give minimal representatives of non-trivial elements of $C(\Z[\Theta_n])$ for $3\leq n\leq 36$. In particular,  $C(\Z[\Theta_n])$ is trivial if $3\leq n\leq 9$ or $n=11$, and hence Conjecture~\ref{conjecture:Gompf} is true for trace $3\leq n\leq 9$, and for trace~11. Since $10\equiv 7\Mod 3$, by applying Lemma~\ref{lemma:induction} for $n=10$, we can see that Conjecture~\ref{conjecture:Gompf} is true for trace 10. similarly, $12\equiv 7\Mod 5$, by applying Lemma~\ref{lemma:induction} for $n=12$, we can see that Conjecture~\ref{conjecture:Gompf} is true for trace 12. 
	
	We can continue this argument to conclude that Conjecture~\ref{conjecture:Gompf} is true for trace $n$ for $3\leq n\leq 51$. In fact, by Lemma~\ref{lemma:induction}, it suffices to observe the following statement using Tables~\ref{table:3<=n<=36}--\ref{table:51<=n<=60}. For any $13\leq n\leq 51$, every non-trivial element of $C(\Z[\Theta_n])$ has minimal representative $(c,d,n)$ such that $n\equiv n_0$ for some $6-n\leq n_0\leq n-1$. 
	
	When $n=52$, the inductive argument works for all minimal representatives except $(32,103,52)$. We give a sequence of Gompf equivalences from $(32,103,52)$ to $(87,101,50)$:
	\[(32,103,52)\sim_G (32,103,-51)\sim_S (87,101,-51)\sim_G (87,101,50).\]
	Since Conjecture~\ref{conjecture:Gompf} is true for trace $50$, $(87,101,50)$ is also Gompf equivalent to $(1,1,2)$, and hence Conjecture~\ref{conjecture:Gompf} is true for trace $52$.
	
	As in the above, one can easily check that for any $53\leq n\leq 55$, every non-trivial element of $C(\Z[\Theta_n])$ has minimal representative $(c,d,n)$ such that $n\equiv n_0$ for some $6-n\leq n_0\leq n-1$ using Table~\ref{table:51<=n<=60}. Conjecture~\ref{conjecture:Gompf} is true for trace $53\leq n\leq 55$. For $56\leq n\leq 69$, we can similarly continue the inductive argument except few cases. For brevity of our discussion, we just record Gompf equivalences for these exceptional cases. 
		
		\begin{itemize}
		\item $(15,109,57)\sim_G (15,109,-52)\sim_S(18,79,-52)\sim_G (18,79,27)$.
	\item $(107,117,58)\sim_G (107,117,-59)\sim_S (29,109,-59)\sim_G (29,109,50)$.
	\item $(101,123,58)\sim_G (101,123,-65)\sim_S(30,47,-65)\sim_G (30,47,18)$.
	\item $(83,141,58)\sim_S(128,165,58)\sim_G (128,165,-107)\sim_S(38,119,-107)\sim_G\\
	(38,119,12)$.
	\item $(26,139,61)\sim_S(119,291,61)\sim_G(119,291,-230)\sim_S(302,391,-230)\sim_G\\
	(302,391,161)\sim_S(114,149,161)\sim_G(114,149,-15)$.
	\item $(146,159,64)\sim_G(146,159,-95)\sim_S(26,89,-95)\sim_G(26,89,-6)$.
	\item $(41,173,67)\sim_G(41,173,-106)\sim_S(210,233,-106)\sim_G(210,233,127)\sim_S\\
	(158,267,127)\sim_G(158,267,-140)\sim_S(153,179,-140)\sim_G(153,179,39)$.
	\item $(50,179,67)\sim_S(272,291,67)\sim_G(272,291,-224)\sim_S(142,397,-224)\sim_G\\
	(142,397,173)\sim_S(14,149,173)\sim_G(14,149,24)$.
	\item $(126,197,68)\sim_S(248,265,68)\sim_G(248,265,-197)\sim_S(170,407,-197)\sim_G\\
	(170,407,210)\sim_S(18,277,210)\sim_G(18,277,-67)\sim_S(38,205,-67)\sim_G\\
	(38,205,138)\sim_S(139,227,138)\sim_G(139,227,-89)\sim_S(70,97,-89)\sim_G\\
	(70,97,8)$.
	\item $(80,181,69)\sim_S(167,211,69)\sim_G(167,211,-142)\sim_S(218,269,-142)\sim_G\\
	(218,269,127)\sim_S(36,151,127)\sim_G(36,151,-24)$.
	\end{itemize}
	This completes the proof.
	\end{proof}
\subsection{Proof of Corollary~\ref{corollary:D}}
We show Corollary~\ref{corollary:D} which says that $\Sigma_{M_k}^\epsilon$ is diffeomorphic to $S^4$, but $M_k$ is not similar to $A_n$ for any integers $k$ and $n$ where 
\[M_k=\begin{bmatrix}0&14k+7&49k+24\\0&2&7\\1&0&49k+25
	\end{bmatrix}.\] 

\begin{proof}[Proof of Corollary~\ref{corollary:D}] Note that $M_k=X_{2,7,49k+27}$. As we mentioned in the introduction, $\Sigma_{M_k}^\epsilon$ is diffeomorphic to $S^4$ for any integer $k$ and $\epsilon\in \Z_2$ by Corollary~\ref{corollary:C} or its weaker version given in \cite[Theorem~3.2]{Gompf:2010-1}. It remains to show that $M_k$ is not similar to $A_n$ for any integers $k$ and $n$. By Proposition~\ref{proposition:Aitchison-Rubinstein}, the similarity class of $M_k$ corresponds to the ideal class $[\langle \Theta_{49k+27}-2,7\rangle]\in C(\Z[\Theta_{49k+27}])$. We proved in Proposition~\ref{proposition:49k+27} that the ideal $\langle \Theta_{49k+27}-2,7\rangle$ is not invertible, and hence represents a non-trivial element in $C(\Z[\Theta_{49k+27}])$. As we discussed in Remark~\ref{remark:identity}, the similarity class of $A_n$ corresponds to the trivial element in $C(\Z[\Theta_{n+2}])$. It follows that $M_k$ is not similar to $A_n$ for any $k$ and $n$.
\end{proof}
\section{A note on Earle's result on \CS matrices}\label{section:Earle}
In \cite{Earle:2014-1}, Earle considered the following special family of \CS matrices
\[X_{c,d,c+2}=\begin{bmatrix}0&a&b\\0&c&d\\1&0&2\end{bmatrix},\]
and showed that some of them are Gompf equivalent to $A_0$.
\begin{theorem}[{\cite[Theorem~3.1]{Earle:2014-1}}]\label{theorem:Earle}
 The \CS matrix $X_{c,d,c+2}$ is Gompf equivalent to $A_0$ if $0\leq c\leq 94$ and $a\neq 19, 37$, or if $1\leq d\leq 35$.
\end{theorem}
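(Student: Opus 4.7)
The plan is to reduce Earle's statement to Theorem~\ref{theorem:B} using the trace-changing relation $\sim_G$. Every tuple $(c,d,c+2) \in \mathcal{CS}$ is $\sim_G$-related to $(c,d,c+2+kd)$ for all $k \in \Z$, so we are free to translate the trace within its congruence class modulo $d$, and then to use $\sim_S$ to walk along the fiber of the Latimer--MacDuffee--Taussky bijection (Proposition~\ref{proposition:Aitchison-Rubinstein}). This gives the paper's two-step toolkit: shift the trace into a range where Theorem~\ref{theorem:B} applies, then if needed navigate within the ideal class using the MAGMA routines of Section~\ref{section:findingrepresentatives}.

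First I would treat the case $1 \le d \le 35$. The interval $[-64, 69]$ contains $134$ consecutive integers, so in particular it meets every residue class modulo any $d \le 134$. Choosing $k \in \Z$ with $n_0 := c+2+kd \in [-64,69]$, we get $(c,d,c+2) \sim_G (c,d,n_0)$; by Theorem~\ref{theorem:B}, Conjecture~\ref{conjecture:Gompf} holds for trace $n_0$, so $(c,d,n_0) \sim (1,1,2)$, which closes this case. This argument actually gives the stronger statement that $1 \le d \le 134$ suffices, which is the generalization announced as Theorem~\ref{theorem:Earlegeneralversion}.

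Next I would treat the case $0 \le c \le 94$ with $a \ne 19, 37$. From $b=c-1$ and $ad-bc=1$ one computes $ad = c^2-c+1$, so $d$ can be as large as $c^2-c+1 \approx 94^2$, and the previous shortcut does not immediately apply. The idea is to run Algorithm~\ref{algorithm:2} with $n=c+2$ on the given tuple, enumerating all $(c',d',c+2)$ in the same ideal class of $C(\Z[\Theta_{c+2}])$ with $d' \le 134$. For every ideal class that admits such a representative, the problem reduces to the previous paragraph.

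The main obstacle, which the exclusion $a \in \{19,37\}$ presumably records, is the existence of ideal classes $[\langle \Theta_{c+2}-c,d\rangle]$ whose entire $\sim_S$-orbit has second entry exceeding $134$. For those, a single $\sim_S$ followed by a single $\sim_G$ is not enough; one must alternate $\sim_S$ and $\sim_G$ several times, possibly combined with the hidden symmetry $n \leftrightarrow 5-n$ of Theorem~\ref{theorem:A}, until the chain of tuples eventually reaches a trace lying in $[-64,69]$. Verifying by MAGMA that such a chain exists in each of the finitely many offending cases is exactly where Earle's hand calculation required its exclusions, and it is where any proof following the strategy above will do its real work.
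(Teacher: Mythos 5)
Your first case is correct and complete: for $1\leq d\leq 134$ the interval $[-64,69]$ contains $134$ consecutive integers and hence meets every residue class modulo $d$, so a single $\sim_G$ move followed by Theorem~\ref{theorem:B} finishes the argument; this is exactly how the paper obtains the $d$-part of Corollary~\ref{corollary:C} and the first sentence of the proof of Theorem~\ref{theorem:Earlegeneralversion}. The genuine gap is in the second case. For $0\leq c\leq 94$ with $a\neq 19,37$ you describe a search strategy (run Algorithm~\ref{algorithm:2} on $n=c+2$, look for a representative of the same ideal class with second entry at most $134$, and otherwise alternate $\sim_S$ and $\sim_G$, possibly using Theorem~\ref{theorem:A}), but you never execute it: you explicitly defer ``the real work'' of exhibiting a terminating chain for each offending tuple. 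Nothing guarantees a priori that such chains exist or can be found -- in this subject they are produced by trial and error -- so the statement is not established by the proposal. Two further remarks. You miss the free reduction that for $c\leq 67$ the trace $c+2$ already lies in $[-64,69]$, so Theorem~\ref{theorem:B} applies with no condition on $a$ or $d$ and only $68\leq c\leq 94$ (traces $70$ through $96$) requires any work at all. Also, your appeal to Algorithm~\ref{algorithm:2} silently assumes $\Z[\Theta_{c+2}]$ is a Dedekind domain, which fails for example at $c=74$ (trace $76=49\cdot 1+27$, Proposition~\ref{proposition:49k+27}); the non-invertible classes there would need the separate analysis of the kind carried out in Section~\ref{subsection:27}.

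For comparison: the paper does not reprove this statement. Theorem~\ref{theorem:Earle} is quoted from Earle, and the proof of the strengthening Theorem~\ref{theorem:Earlegeneralversion} invokes Earle's theorem for precisely the subcase you leave open ($0\leq c\leq 94$, $a\neq 19,37$), adding only the fifteen exceptional tuples with $a\in\{19,37\}$, each dispatched either by Theorem~\ref{theorem:B} directly or by a short explicit chain of $\sim_S$ and $\sim_G$ moves that is written out. So your overall strategy is the right one and consistent with the paper's methods, but as written it is an announcement of a computation rather than a proof of the theorem.
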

Using our method, we generalize Theorem~\ref{theorem:Earle} by removing the technical conditions on the entry~$a$, and weakening the condition on the entry~$d$ as follows:
\begin{theorem}\label{theorem:Earlegeneralversion}The \CS matrix $X_{c,d,c+2}$ is Gompf equivalent to $A_0$ if $0\leq c\leq 94$ or if $1\leq d\leq 134$.
\end{theorem}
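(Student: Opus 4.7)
The condition for $(c,d,c+2)\in\mathcal{CS}$ simplifies to $d\mid c^2-c+1$, since a direct expansion yields $f_{c+2}(c)=-(c^2-c+1)$. With this in hand, the plan is to split the argument into three cases covering the two disjunctive hypotheses.

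For $1\leq d\leq 134$, the interval $[-64,69]$ contains exactly $134$ consecutive integers, so for any such $d$ the arithmetic progression $\{c+2+kd:k\in\Z\}$ must meet $[-64,69]$; pick $n_0=c+2+kd$ in this range. Since $f_{n_0}(c)\equiv f_{c+2}(c)\equiv 0\pmod d$, the triple $(c,d,n_0)$ lies in $\mathcal{CS}$, and $(c,d,c+2)\sim_G(c,d,n_0)$. Theorem~\ref{theorem:B} then gives $(c,d,n_0)\sim(1,1,2)$. For $0\leq c\leq 67$, the trace $c+2$ already lies in $[2,69]$, and Theorem~\ref{theorem:B} applies directly to produce $(c,d,c+2)\sim(1,1,2)$.

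The substantive remaining case is $68\leq c\leq 94$ with $d>134$. Because $d\mid c^2-c+1\leq 94\cdot 93+1=8743$, for each such $c$ only a handful of admissible $d$ occur, producing a finite explicit list of pairs $(c,d)$ to be verified individually. For each pair, the plan is to run the MAGMA code of Section~\ref{section:findingrepresentatives} (the analogue of Algorithm~\ref{algorithm:2}) to enumerate small-$d'$ representatives of the ideal class $[\langle\Theta_{c+2}-c,d\rangle]$; whenever one finds a representative $(c',d',c+2)$ with $d'\leq 134$, the $\sim_S$ equivalence $(c,d,c+2)\sim_S(c',d',c+2)$ reduces matters to the first case. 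When no such direct reduction is visible, I would combine the symmetry of Theorem~\ref{theorem:A}, which for this family takes the explicit form $(c,d,c+2)\sim(d-c+1,d,d-c+3)$, with further $\sim_S$ and $\sim_G$ moves, in the spirit of the explicit chains like $(107,117,58)\sim\cdots\sim(29,109,50)$ displayed in Section~\ref{section:theoremB}.

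The hard part will be the last case: although the list of $(c,d)$ is short, there is no a~priori guarantee that every ideal class in $C(\Z[\Theta_{c+2}])$ admits a representative with $d'\leq 134$, nor that a short chain of $\sim_S$ and $\sim_G$ moves always exists. My expectation, supported by the pattern of explicit chains used in the proof of Theorem~\ref{theorem:B}, is that every case reduces after a bounded number of steps, but the actual verification is carried out by a computer search rather than by a uniform theoretical argument.
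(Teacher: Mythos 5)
Your handling of the first two cases is correct and coincides with the paper's: for $1\leq d\leq 134$ the interval $[-64,69]$ contains $134$ consecutive integers, so a single $\sim_G$ move brings the trace into range and Theorem~\ref{theorem:B} applies, and for $0\leq c\leq 67$ the trace $c+2$ already lies in $[2,69]$. The identity $f_{c+2}(c)=-(c^2-c+1)$ and your explicit form $(c,d,c+2)\mapsto(d-c+1,d,d-c+3)$ of the Theorem~\ref{theorem:A} duality on this family are also correct. The genuine gap is the remaining case $68\leq c\leq 94$ with $d>134$: you reduce it to a finite list of pairs $(c,d)$ with $d\mid c^2-c+1$, but you never exhibit the required chains of $\sim_S$ and $\sim_G$ moves, and you concede yourself that there is no a priori guarantee such chains exist. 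A theorem of this type is only proved once the equivalences are actually produced (or a result producing them is cited), so as written nothing in that range --- for instance $(c,d)=(68,147)$ --- is established.

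The paper closes this case by a route you do not take: it invokes Earle's Theorem~\ref{theorem:Earle}, which already covers all $0\leq c\leq 94$ except when the entry $a$ equals $19$ or $37$. Since $ad=c^2-c+1$, fixing $a\in\{19,37\}$ leaves exactly $15$ tuples $(c,d,c+2)$ with $0\leq c\leq 94$; eleven of them have trace at most $69$ or $d\leq 134$ and follow from Theorem~\ref{theorem:B}, and the remaining four, namely $(69,247,71)$, $(84,367,86)$, $(88,403,90)$ and $(85,193,87)$, are dispatched by short, explicitly recorded chains of $\sim_S$ and $\sim_G$ moves found with MAGMA. Your all-cases search would have to process a much longer list of pairs than these four; it may well succeed, and your proposed toolkit (small-$d'$ representatives via the analogue of Algorithm~\ref{algorithm:2}, the duality, and $\sim_G$ shifts) is the right one, but until that search is run and its output recorded the proof is incomplete. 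Either execute and report the search, or shortcut it by citing Theorem~\ref{theorem:Earle} as the paper does.
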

\begin{proof}By Theorem~\ref{theorem:B}, $X_{c,d,c+2}$ is Gompf equivalent to $A_0$ if $1\leq d\leq 134$. It suffices to prove for the cases that $a=19$ or $37$ and $0\leq c\leq 94$. By Proposition~\ref{csm2}, $f_{c+2}(c)\equiv 0\Mod{d}$ since $X_{c,d,c+2}$ is a \CS matrix. The following tuples $(c,d,c+2)$ in $\mathcal{CS}$ give the list of \CS matrices $X_{c,d,c+2}$ satisfying $a=19$ and $0\leq c\leq 94$:
\begin{align*}&(8,3,10), (12,7,14), (27,37,29), (31,49,33), (46,109,48), (50,129,52), (65,219,67),\\
& (69,247,71), (84,367,86), (88,403,90).
\end{align*}
The tuples in the first row correspond to \CS matrices with trace $\leq 69$, and hence Gompf equivalent to $A_0$ by Theorem~\ref{theorem:B}. We give Gompf equivalences from the tuples in the second row as we did in the proof of Theorem~\ref{theorem:B} to the tuples that are known to Gompf equivalent to $(1,1,2)$ using the MAGMA code for Algorithm~\ref{algorithm:2} given in Section~\ref{subsection:MAGMAcodes} as follows:\begin{itemize}
\item $(69,247,71)\sim_S (83,103,71)$.
\item $(84,367,86)\sim_S (102,127,86)$.
\item $(88,403,90)\sim_S (107,133,90)$.
\end{itemize}

Similarly, the following tuples $(c,d,c+2)$ in $\mathcal{CS}$ give the list of \CS matrices $X_{c,d,c+2}$ satisfying $a=37$ and $0\leq c\leq 94$:
\[(11,3,13), (27,19,29), (48,61,50), (64,109,66), (85,193,87).\]
As we did before, we give a Gompf equivalence from $(85,193,87)$ to a tuple that is known to Gompf equivalent to $(1,1,2)$ as follows:
\[(85,193,87)\sim_S (198,283,87)\sim_G(198,283,-196)\sim_S(155,229,-196)\sim_G(155,229,33).\]
This completes the proof.
\end{proof}

\bibliographystyle{amsalpha}
\renewcommand{\MR}[1]{}
\bibliography{research}
\end{document}